\documentclass[11pt, a4paper, leqno]{amsart}

\usepackage[utf8]{inputenc}
\usepackage[T1]{fontenc}
\usepackage[english]{babel}
\usepackage{csquotes}

\usepackage{comment}
\usepackage[margin=1in]{geometry}
\usepackage{microtype}         
\usepackage{xcolor}            

\usepackage{soul}

\definecolor{color1}{HTML}{4169e1}
\definecolor{color2}{HTML}{d2d2d2} 

\usepackage{enumitem}          
\setlist[enumerate,1]{label=(\arabic*)} 

\usepackage{graphicx}          
\usepackage{booktabs}          

\usepackage{pgfplots}
\pgfplotsset{compat=1.18} 
\usetikzlibrary{arrows}  

\usepackage{etoolbox}          
\usepackage{chngcntr}          

\usepackage{mathtools}         
\usepackage{amssymb}           
\usepackage{bm}                
\usepackage{mleftright}        
\mleftright

\usepackage{mathrsfs}          
\usepackage{upgreek}           

\usepackage{tikz}              
\usepackage{quiver}            

\newtheorem{theorem}{Theorem}[section]
\newtheorem{lemma}[theorem]{Lemma}
\newtheorem{proposition}[theorem]{Proposition}
\newtheorem{corollary}[theorem]{Corollary}

\theoremstyle{definition}
\newtheorem{definition}[theorem]{Definition}

\newtheorem{remark}[theorem]{Remark} 

\theoremstyle{remark}

\counterwithout{equation}{section}

\mathtoolsset{showonlyrefs,showmanualtags}

\usepackage[colorlinks=true,
            linkcolor=color1,
            citecolor=color1,
            urlcolor=color2,
            pagebackref=true,
            unicode=true]{hyperref}

\usepackage[capitalise, nameinlink]{cleveref}

\crefname{theorem}{Theorem}{Theorems}
\crefname{lemma}{Lemma}{Lemmas}
\crefname{proposition}{Proposition}{Propositions}
\crefname{corollary}{Corollary}{Corollaries}
\crefname{conjecture}{Conjecture}{Conjectures}
\crefname{definition}{Definition}{Definitions}
\crefname{example}{Example}{Examples}
\crefname{problem}{Problem}{Problems}
\crefname{exercise}{Exercise}{Exercises}
\crefname{question}{Question}{Questions}
\crefname{remark}{Remark}{Remarks}
\crefname{notation}{Notation}{Notations}
\crefname{convention}{Convention}{Conventions}

\crefformat{equation}{(#2#1#3)}
\crefmultiformat{equation}{(#2#1#3)}{ and (#2#1#3)}{, (#2#1#3)}{, and (#2#1#3)}
\crefrangeformat{equation}{(#3#1#4)--(#5#2#6)}

\makeatletter
\newcommand\mkcal@one[1]{\expandafter\gdef\csname c#1\endcsname{\ensuremath{\mathcal{#1}}}}
\forcsvlist{\mkcal@one}{A,B,C,D,E,F,G,H,I,J,K,L,M,N,O,P,Q,R,S,T,U,V,W,X,Y,Z}

\newcommand\mksf@one[1]{\expandafter\gdef\csname s#1\endcsname{\ensuremath{\mathsf{#1}}}}
\forcsvlist{\mksf@one}{A,B,C,D,E,F,G,H,I,J,K,L,M,N,O,P,Q,R,S,T,U,V,W,X,Y,Z}

\newcommand\mkbb@one[1]{\expandafter\gdef\csname #1#1\endcsname{\ensuremath{\mathbb{#1}}}}
\forcsvlist{\mkbb@one}{A,B,C,D,E,F,G,H,I,J,K,L,M,N,O,P,Q,R,S,T,U,V,W,X,Y,Z}

\newcommand\mkbf@one[1]{\expandafter\gdef\csname b#1\endcsname{\ensuremath{\mathbf{#1}}}}
\forcsvlist{\mkbf@one}{A,B,C,D,E,F,G,H,I,J,K,L,M,N,O,P,Q,R,S,T,U,V,W,X,Y,Z}

\newcommand\mkrm@one[1]{\expandafter\gdef\csname r#1\endcsname{\ensuremath{\mathrm{#1}}}}
\forcsvlist{\mkrm@one}{A,B,C,D,E,F,G,H,I,J,K,L,M,N,O,P,Q,R,S,T,U,V,W,X,Y,Z}

\newcommand\mkfrak@one[1]{\expandafter\gdef\csname f#1\endcsname{\ensuremath{\mathfrak{#1}}}}
\forcsvlist{\mkfrak@one}{A,B,C,D,E,F,G,H,I,J,K,L,M,N,O,P,Q,R,S,T,U,V,W,X,Y,Z}

\newcommand\mkfraklow@one[1]{\expandafter\gdef\csname fr#1\endcsname{\ensuremath{\mathfrak{#1}}}}
\forcsvlist{\mkfraklow@one}{a,b,c,d,e,f,g,h,i,j,k,l,m,n,o,p,q,r,s,t,u,v,w,x,y,z}
\makeatother

\DeclareMathOperator{\Coh}{Coh}

\DeclareMathOperator{\ch}{ch}

\DeclareMathOperator{\Db}{D^b}

\DeclareMathOperator{\Hom}{Hom}

\DeclareMathOperator{\rk}{rk}

\DeclareMathOperator{\td}{td}

\DeclareMathOperator{\arcosh}{arcosh}

\DeclareMathOperator{\reg}{reg}

\newcommand{\R}{\RR}

\newcommand{\Q}{\QQ}

\newcommand{\K}{\rK}

\title[Stability conditions on threefolds]{Stability conditions on threefolds}
 \author{Yiran Cheng}
 \address{Department of Mathematics, Imperial College London, London SW7 2AZ, United Kingdom
 \newline
 }
 \email{y.cheng@imperial.ac.uk}

 \author{Soheyla Feyzbakhsh}
 \email{s.feyzbakhsh@imperial.ac.uk}

\begin{document}
\maketitle
\vspace{-.8 cm}
\begin{abstract}
We investigate a subspace of Bridgeland stability conditions on $\PP^n$ satisfying the so-called Li condition. These are the stability conditions whose restriction to a smooth projective subvariety $X \subset \PP^n$ is again a stability condition. We then show that, when $X$ is a threefold, the restricted stability conditions coincide with those obtained via the double-tilt construction introduced by Bayer--Macr\`i--Toda. As an application, we prove the weak BMT conjecture.
\end{abstract}

\section{Introduction}
 

Recently, Chunyi Li \cite{Li26} proved that the Bridgeland stability conditions on products of elliptic curves constructed by Yucheng Liu \cite{Liu21} descend through the quotient by the natural action of $(\mathbb{Z}/2\mathbb{Z})^n\rtimes S_n$, building on a result of Polishchuk \cite{Pol07}. As a consequence, this yields a family of Bridgeland stability conditions
\[
\sigma^{\PP^n}_{a,b}=(\cA^{\PP^n}_{a,b},Z^{\PP^n}_{a,b}),
\qquad
(a,b)\in\mathbb{Q}_{>0}\times\mathbb{Q},
\]
on the projective space $\PP^n$. Their central charge is given by the physical central charge
\[
Z_{a,b}^{\PP^n}(E)
=
-\int_{\PP^n} e^{-(b+ia)H_{\PP^n}}\,\ch(E),
\]
where $H_{\PP^n}=c_1(\mathcal{O}_{\PP^n}(1))$. Our first result shows that, for sufficiently large $a$, these stability conditions satisfy the so-called Li condition, which guarantees that they restrict to Bridgeland stability conditions on smooth projective subvarieties of $\PP^n$.

\begin{theorem}[= Corollary~\ref{cor:large-volume}]\label{thm-intro-1}
Let $m\in\ZZ_{>0}$. If $a>\frac{mn}{\uppi}$, then every nonzero object $E\in\Db(\PP^n)$ satisfies
\begin{equation}\label{eq:Pn-phase}
    \phi_{a,b}^{-}(E\otimes\cO_{\PP^n}(m))
    <
    \phi_{a,b}^{-}(E)+1,
\end{equation}
where $\phi_{a,b}^{-}(E)$ denotes the minimum phase of the Harder--Narasimhan filtration of $E$ with respect to the stability condition $\sigma_{a,b}$.
\end{theorem}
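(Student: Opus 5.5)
The plan is to turn the twist by $\cO_{\PP^n}(m)$ into a shift of the parameter $b$, and then bound how far phases can move along the resulting real path in the space of stability conditions.

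\emph{Step 1: reduce to moving $b$.} Since $\ch(E\otimes\cO_{\PP^n}(m))=\ch(E)\,e^{mH_{\PP^n}}$, we have
\[
Z^{\PP^n}_{a,b}(E\otimes\cO_{\PP^n}(m))=-\int_{\PP^n}e^{-(b-m+ia)H_{\PP^n}}\ch(E)=Z^{\PP^n}_{a,b-m}(E).
\]
I would check from Li's construction that tensoring by $\cO_{\PP^n}(m)$ carries $\sigma^{\PP^n}_{a,b-m}$ to $\sigma^{\PP^n}_{a,b}$, with hearts matching and phases normalised compatibly. This should follow because the $(\mathbb{Z}/2)^n\rtimes S_n$-descent is functorial and the twist lifts to a translation-type autoequivalence on $E^n$. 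Granting this, $\phi^-_{a,b}(E(m))=\phi^-_{a,b-m}(E)$. The statement then reduces to
\[
\phi^-_{a,b-m}(E)<\phi^-_{a,b}(E)+1 .
\]

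\emph{Step 2: a continuous path and a distance estimate.} The family $t\mapsto\sigma_{a,t}$, for $t\in[b-m,b]$, is a continuous path in $\mathrm{Stab}(\PP^n)$; I would extend the construction to real $b$ by deformation if necessary. Along this path, $|\phi^-_{a,t}(E)-\phi^-_{a,t'}(E)|$ is bounded by the Bridgeland distance between $\sigma_{a,t}$ and $\sigma_{a,t'}$. It is therefore enough to show that phases move at rate at most $\frac{n}{\uppi a}$ per unit of $t$. Integrating over an interval of length $m$ then gives a total phase change at most $\frac{mn}{\uppi a}<1$.

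\emph{Step 3: the infinitesimal bound, the main obstacle.} I would derive the rate bound from the key estimate
\[
\Bigl|\operatorname{Im}\tfrac{\partial_t Z_{a,t}(F)}{Z_{a,t}(F)}\Bigr|\le \tfrac{n}{a}
\quad\text{for all }\sigma_{a,t}\text{-semistable }F,
\]
combined with Bridgeland's deformation lemma in the form used for the local homeomorphism theorem (applied in small steps, or via a standard ODE-type argument on the phase of semistable factors). The model case explains the constant. For $F=\cO(k)$ we have $Z=-(k-z)^n/n!$ with $z=t+ia$. Then $\partial_t\log Z=-n/(k-z)$, whose modulus is at most $n/a$. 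Geometrically, shifting $t$ by $m$ changes $\arg(k-z)$ by at most the angle subtended, which is less than $m/a$. Multiplying by $n$ and dividing by $\uppi$ gives the phase bound $\frac{mn}{\uppi a}$.

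For general semistable objects I would use the structure of Li's construction: the central charge on $E^n$ is a product of $n$ one-dimensional physical central charges $-(d_j-z\,r_j)$. Semistable objects are controlled by (descents of) products of stable objects on elliptic curves, for which each factor has $|\partial_t\log|\le 1/a$, since $|d_j-zr_j|\ge a|r_j|$ when $r_j\neq 0$, and the factor is constant in $t$ when $r_j=0$. The hard part is making this rigorous for all semistable objects, not just these building blocks.

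As a fallback, I would prove the global inequality directly. I would show that the HN factors of $E$ with respect to $\sigma_{a,b}$ remain in phases $>\phi^-_{a,b}(E)+1-\ldots$ under the deformation, using the support property, which is a quadratic form compatible with the product structure, to control walls.
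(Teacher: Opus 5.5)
The overall plan matches the paper's. You identify $\sigma^{\PP^n}_{a,b}\otimes\cO_{\PP^n}(m)$ with $\sigma^{\PP^n}_{a,b+m}$. You then bound the distance along the horizontal path in $b$ by integrating $\frac{1}{\uppi}\sup|\partial_b Z/Z|$ over semistable objects (this is Lemma~\ref{lem:finsler}), which yields $\frac{mn}{\uppi a}<1$.

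The genuine gap is Step~3. Everything rests on $|\partial_t Z_{a,t}(F)|\le\frac{n}{a}|Z_{a,t}(F)|$ for \emph{every} $\sigma_{a,t}$-semistable $F$, and you only check it on examples.

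\begin{itemize}
\item \emph{Building blocks do not suffice.} The estimate survives extensions of semistable objects of the same phase. But semistable objects on $C^n$ are not iterated extensions of same-phase box products $F_1\boxtimes\cdots\boxtimes F_n$. For instance, ideal sheaves of points on $C\times C$ are stable in the large volume limit and are not box products.
\item \emph{Why the bound is not formal.} $Z_{a,t}(F)$ is a polynomial of degree at most $n$ in $z=t+ia$, and $\partial_t\log Z_{a,t}(F)=\sum_j(z-z_j)^{-1}$ over its roots $z_j$. The bound $n/a$ is automatic when all roots are real, as for box products. For a general class it is a genuine Bogomolov-type positivity constraint on semistable objects.
\item \emph{The support-property fallback cannot supply it.} A support property only gives $|\partial_t Z(F)|\le C|Z(F)|$ for some unspecified constant $C$. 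The threshold $a>\frac{mn}{\uppi}$ needs exactly $C=n/a$.
\end{itemize}

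The missing idea is Lemma~\ref{lem:restriction}. It is proved on $C^n$ and transported to $\PP^n$ via $\sigma^{\PP^n}_{a,b}=\pi_\sharp\sigma^{(n)}_{2a,2b}$.

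\begin{itemize}
\item \emph{Reduction.} Differentiating in $b$ turns $Z^{(n)}_{a,b}(E)$ into minus the sum of the central charges $Z^{(n-1)}_{a,b}$ of the restrictions of $E$ to the $n$ fibre divisors. The paper writes this as $-nZ^{(n-1)}_{a,b}(E|_{C^{n-1}})$. So one needs $a|Z^{(n-1)}_{a,b}(E|_{C^{n-1}})|\le|Z^{(n)}_{a,b}(E)|$ for $\sigma^{(n)}_{a,b}$-semistable $E$, in each fibre direction (by $\fS_n$-invariance).
\item \emph{Source of the inequality.} It comes from the inductive structure of Liu's construction. Write $Z^{(n-1)}(p_*(E\otimes q_n^*\cO_C(k)))=Ak+B+i(Ck+D)$. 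Then $Z^{(n-1)}(E|_{C^{n-1}})=A+iC$ and $Z^{(n)}(E)=-(b+ia)(A+iC)+B+iD$. Hence $\operatorname{Im}\bigl(\overline{Z^{(n)}(E)}\,Z^{(n-1)}(E|_{C^{n-1}})\bigr)-a|Z^{(n-1)}(E|_{C^{n-1}})|^2=BC-AD$.
\item \emph{Conclusion.} The positivity $BC-AD\ge 0$ for semistable $E$ is \cite[Lemma~5.7]{Liu21}. The bound then follows from $\operatorname{Im}(\bar{Z}w)\le|Z||w|$.
\end{itemize}

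A smaller point concerns Step~1. The twist by $\cO_{\PP^n}(m)$ pulls back to the line-bundle twist by $\cO_{C^n}(2m\widetilde H)$, not to a translation. Equal central charges do not by themselves identify two stability conditions. The paper uses the uniqueness theorem \cite[Theorem~1.1]{LMPSZ} on $C^n$, together with \cite[Lemma~3.2]{Li26} to pass through $\pi_\sharp$.
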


Now let $\iota \colon X\hookrightarrow\PP^n$ be a smooth projective subvariety. From a resolution of $\iota_*\cO_X$ by split vector bundles, we define a rational number 
\[
m(X\xhookrightarrow{\iota}\PP^n) \in \mathbb{Q},
\]
see Definition~\ref{def:m}. This invariant is bounded above by the Castelnuovo--Mumford regularity of the ideal sheaf $\cI_{X/\PP^n}$. Combining \cite[Theorem~6.5]{Li26} with Theorem~\ref{thm-intro-1}, we deduce that if $ a>\frac{n}{\uppi}\,m(X\xhookrightarrow{\iota}\PP^n),
$
then
\[
\sigma^X_{a,b}\coloneqq\iota^{\sharp}\sigma^{\PP^n}_{a,b}
\]
is a Bridgeland stability condition on $X$ with heart
\[
\cA^X_{a,b}
=
\{E\in\Db(X):\iota_*E\in\cA^{\PP^n}_{a,b}\},
\]
and central charge
\begin{equation}\label{physical-central}
Z^X_{a,b}(E)
=
-\int_X
e^{-(b+ia)H_X}
\td(N_{X/\PP^n})^{-1}
\ch(E).
\end{equation}

On the other hand, there is the tilt construction introduced by Bridgeland \cite{Bri08} and further developed by Bayer--Macr\`i--Toda \cite{Bayer-Macri-Toda:2014}. Given the heart $\cA$ of a bounded $t$-structure together with a weak stability function $Z\colon \K_{\mathrm{num}}(\cA)\to\mathbb{C}$, namely a group homomorphism sending every nonzero object of $\cA$ to the upper half-plane or the non-positive real axis, one obtains a torsion pair on $\cA$ by separating stable objects according to the sign of the real part of $Z$. Tilting with respect to this torsion pair produces a new heart, denoted by $\cA^Z$.

Let $(X,H)$ be a smooth polarized threefold. It was conjectured in \cite{Bayer-Macri-Toda:2014, Bayer-Macri-Stellari:2016, Bayer-Macri:0} that the heart obtained by the double-tilt construction on $\Coh(X)$, together with the central charge \eqref{physical-central}, defines a Bridgeland stability condition on $X$. Our next theorem confirms this prediction for sufficiently large values of $a$. To state it, we introduce the invariant
\begin{equation}
    a(X,H)
    \coloneqq
    \min \left\{
    \frac{1}{2}n\,m(X\xhookrightarrow{\iota}\PP^n)
    :
    \text{$\iota$ is a closed immersion with }
    H=c_1\bigl(\iota^*\cO_{\PP^n}(1)\bigr)
    \right\}.
\end{equation}

\begin{theorem}[= Theorem~\ref{thm-main-section3}] \label{thm-intro-2}
Let $(X,H)$ be a smooth projective threefold, and let $(a,b)\in\mathbb{Q}^2$ satisfy $a>a(X,H)$. Then the heart $\cA^X_{a,b}$ of the induced stability condition coincides with the double-tilted heart
\[
\bigl(\Coh(X)^{\mathcal Z_b^{(1)}}\bigr)^{\mathcal Z_{a,b}^{(2)}}
\]
where
\begin{equation}
    \mathcal{Z}^{(1)}_b
    =
    -H^2\smash{\widetilde\ch}^{bH}_1
    +iH^3\smash{\widetilde\ch}^{bH}_0,
    \qquad
    \mathcal{Z}^{(2)}_{a,b}
    =
    -H\smash{\widetilde\ch}^{bH}_2
    +\frac{a^2}{6}H^3\smash{\widetilde\ch}^{bH}_0
    +iH^2\smash{\widetilde\ch}^{bH}_1,
\end{equation}
and $ \smash{\widetilde\ch}^{bH}
=
e^{-bH}\,\td(N_{X/\PP^n})^{-1}\ch$. 
\end{theorem}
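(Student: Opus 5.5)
Fix an embedding $\iota\colon X\hookrightarrow\PP^n$ realizing $a(X,H)$, so $a>\frac{n}{2}m(X\xhookrightarrow{\iota}\PP^n)$. The hypothesis in the introduction was $a>\frac{n}{\uppi}m$; since $\frac{n}{2}m>\frac{n}{\uppi}m$, Theorem~\ref{thm-intro-1} and \cite[Theorem~6.5]{Li26} still apply. Hence $\sigma^X_{a,b}=(\cA^X_{a,b},Z^X_{a,b})$ is a Bridgeland stability condition on $X$. The first step is to check by direct expansion that $Z^X_{a,b}=-\widetilde\ch^{bH}_3+\frac{a^2}{2}H\widetilde\ch^{bH}_2-\frac{a^4}{24}\cdots$, up to the $i$-part. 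More precisely, I will show that $Z^X_{a,b}$ is a positive combination of $\mathcal Z^{(2)}_{a,b}$-type terms. The real part should be $-\widetilde\ch_3+\frac{a^2}{2}H\widetilde\ch_2$ and the imaginary part $a(H^2\widetilde\ch_1-\frac{a^2}{6}H^3\widetilde\ch_0)$. This exhibits $\operatorname{Im}Z^X_{a,b}=-a\operatorname{Re}\mathcal Z^{(2)}_{a,b}$, which is the structural fact behind a double tilt.

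Since two hearts of bounded t-structures with one contained in the other must coincide, it suffices to show a single inclusion, namely $\bigl(\Coh(X)^{\mathcal Z^{(1)}_b}\bigr)^{\mathcal Z^{(2)}_{a,b}}\subset\cA^X_{a,b}$. This in turn reduces to two containments, $\cT_2\subset\cA^X_{a,b}$ and $\cF_2[1]\subset\cA^X_{a,b}$, where $(\cT_2,\cF_2)$ is the torsion pair in $\cB_b:=\Coh(X)^{\mathcal Z^{(1)}_b}$. Standard arguments further reduce this to a statement about $\mathcal Z^{(2)}_{a,b}$-semistable objects $E\in\cB_b$. If $\operatorname{Re}\mathcal Z^{(2)}(E)<0$, i.e.\ $\operatorname{Im}Z^X(E)>0$, I must show $E\in\cA^X_{a,b}$. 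Otherwise, I must show $E[1]\in\cA^X_{a,b}$.

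To handle this I would first establish the analogous comparison one level down. Concretely, I would show that $\Coh(X)\subset\langle\cA^X_{a,b},\cA^X_{a,b}[-1],\cA^X_{a,b}[-2]\rangle$ with appropriate phase constraints, and that $\cB_b\subset\langle\cA^X_{a,b},\cA^X_{a,b}[-1]\rangle$. The first statement uses the fact that $\iota_*$ of a sheaf lies in $\Coh(\PP^n)$, together with the known position of $\Coh(\PP^n)$ relative to $\cA^{\PP^n}_{a,b}$ coming from Li's construction, i.e.\ the phases of the skyscraper sheaves and of the line bundles $\cO(k)$. The second statement uses $\mathcal Z^{(1)}_b$-stability together with the large-volume behaviour.

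Next I would take a $\mathcal Z^{(2)}$-semistable $E\in\cB_b$ and consider its HN filtration with respect to $\sigma^X_{a,b}$. Here $\phi^+(E)\le1$ and $\phi^-(E)>-1$, so $E$ is an extension of $A\in\cA^X$ by $B[-1]$ with $B\in\cA^X$. Suppose $\operatorname{Im}Z^X(E)>0$ and $B\neq0$. Then $B[-1]\to E$ destabilizes $E$ with respect to $\mathcal Z^{(2)}$, because $\operatorname{Im}Z^X(B)\ge0$ means $\operatorname{Re}\mathcal Z^{(2)}(B[-1])\ge0$. This requires checking that $B[-1]\in\cB_b$, which follows from the two-level containment and the fact that the phase of $B$ is close to $0$. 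The case $\operatorname{Im}Z^X(E)\le0$ is symmetric. The edge case $\operatorname{Im}=0$ requires care and is handled by the tilt convention on the nonpositive real axis.

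The main obstacle is the step $\cB_b\subset\langle\cA^X_{a,b},\cA^X_{a,b}[-1]\rangle$, together with controlling the imaginary-part-zero objects. This is exactly where the threshold $a>\frac n2 m$ should enter, rather than $\frac{n}{\uppi}m$. I would attack it with Li's phase inequality from Theorem~\ref{thm-intro-1}, applied to $\iota_*E$ and to $\iota_*E\otimes\cO(m)$ via the split resolution of $\iota_*\cO_X$. This should bound the spread of phases of sheaves with $\mu_H>b$, and of shifts of sheaves with $\mu_H\le b$, by less than $1$. The sharper constant $2$ would come from an explicit estimate of $\arg Z^{\PP^n}_{a,b}(\cO(k))$ for the line bundles in the resolution.
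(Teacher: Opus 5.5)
Your skeleton matches the paper's. One inclusion of hearts suffices, the pivotal intermediate statement is $\Coh^{B_{\!X}}(X)\subset\cP^X_{a,b}(-1,1]$, and the comparison of torsion pairs rests on $\Im Z^X_{a,b}=-a\Re\mathcal{Z}^{(2)}_{a,b}$. Your intermediate formulas are off: in fact $\Re Z^X_{a,b}=-\widetilde{\ch}_3+\frac{a^2}{2}H^2\widetilde{\ch}_1$ and $\Im Z^X_{a,b}=a\bigl(H\widetilde{\ch}_2-\frac{a^2}{6}H^3\widetilde{\ch}_0\bigr)$, but the relation you extract is right.

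The genuine gap is the step you flag as the main obstacle: nothing in your sketch links the $\sigma^X_{a,b}$-phases of a sheaf to its slope $\mu_{B_{\!X}}$. The inequalities of Theorem~\ref{thm-intro-1} only say how phases move under $\otimes\cO(m)$, so by themselves they cannot locate the threshold $\mu_{B_{\!X}}=0$. (That threshold is not $\mu_H=b$; it is shifted by $H^2c_1(N_{X/\PP^n})/2H^3$ because of $\td(N_{X/\PP^n})^{-1}$.) On a threefold $\Im Z^X_{a,b}$ involves $\ch_2$ and $\ch_0$ rather than $\ch_1$, so there is no direct central-charge argument either.

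The missing idea is dimensional reduction to a hyperplane section. For a smooth $S\in|H|$ one first shows $\cA^S_{a,b}=\Coh^{B_{\!S}}(S)$ (Lemma~\ref{induced heart on surface}); this is Bridgeland's surface argument, where $\Im Z^S_{a,b}$ is a multiple of $H\ch_1^{B_{\!S}}$. Three further inputs are then combined:
\begin{itemize}
\item the triangle $E(-H)\to E\to\jmath_*E|_S$;
\item the estimates $\phi^-_{a,b}(E)\le\phi^-_{a,b}(E|_S)$ and $\phi^+_{a,b}(E(H)|_S)\le\phi^+_{a,b}(E)+1$ of Lemma~\ref{restriction}, which is where Corollary~\ref{cor:large-volume} enters;
\item the shift $\mu_{B_{\!X}}(F)=\mu_{B_{\!S}}(F|_S)+\frac12$ coming from the normal bundle sequence.
\end{itemize}
Together they give $\mu^+_{B_{\!X}}(\cH^{-2}(E))\le-\frac12$ and $\mu^-_{B_{\!X}}(\cH^0(E))>\frac12$ for every $E\in\cP^X_{a,b}(0,1]$ (Proposition~\ref{prop.tilt}). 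The inclusion $\Coh^{B_{\!X}}(X)\subset\cP^X_{a,b}(-1,1]$ then follows by Hom-vanishing against the extreme HN factors.

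Relatedly, the constant $\frac n2$ does not come from a sharper estimate of $\arg Z^{\PP^n}_{a,b}(\cO(k))$. Through Lemma~\ref{lem:bound} it guarantees \eqref{eq:weak}. Via Bogomolov--Gieseker for $F[1]$ with $F$ slope-semistable and $\mu_{B_{\!X}}(F)=0$, this is what makes $\mathcal{Z}^{(2)}_{a,b}$ a weak stability function on $\Coh^{B_{\!X}}(X)$, i.e.\ what makes the double tilt defined at all. Your proposal never checks this.

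Two further steps fail as written.

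First, the direction of the HN triangle is reversed. For $E\in\Coh^{B_{\!X}}(X)\subset\cP^X_{a,b}(-1,1]$ the HN triangle is $A\to E\to B[-1]$, with $A\in\cP^X_{a,b}(0,1]$ the \emph{sub}object and $B[-1]\in\cP^X_{a,b}(-1,0]$ the \emph{quotient}. Both lie in $\Coh^{B_{\!X}}(X)$ once the inclusion above is known. A subobject $B[-1]\to E$ with $\nu_{a,b}\le 0<\nu_{a,b}(E)$ would not destabilize $E$. The contradiction has to come from the quotient $E\twoheadrightarrow B[-1]$.

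Second, the edge case is not handled by any tilting convention. When $\nu_{a,b}(E)\le 0$, the subobject $A$ has $\Im Z^X_{a,b}(A)\ge 0$. The dangerous case is $\Im Z^X_{a,b}(A)=0$ with $\nu_{a,b}(A)=0=\nu_{a,b}(E)$, i.e.\ $A\in\Coh^{B_{\!X}}(X)\cap\cP^X_{a,b}(1)$ with $H^2\widetilde{\ch}_1(A)>0$. Such an $A$ does not $\nu_{a,b}$-destabilize $E$, yet it would place $E$ outside $\cP^X_{a,b}(-1,0]$. No convention about the nonpositive real axis rules this out.

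The paper needs Lemma~\ref{lem-phase one}: every object of $\Coh^{B_{\!X}}(X)\cap\cP^X_{a,b}(1)$ is a zero-dimensional sheaf, so that $\nu_{a,b}(A)=+\infty$. The proof uses stability of skyscraper sheaves to confine $\cH^0(A)$ to finitely many points. Restricting to an $S\in|H|$ that avoids them, the $+\frac12$ slope shift forces $\mu_{B_{\!X}}(\cH^{-1}(A))\ge\frac12$ unless $\cH^{-1}(A)=0$, contradicting $A\in\Coh^{B_{\!X}}(X)$.
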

As an immediate consequence of Theorem~\ref{thm-intro-2}, we obtain a twisted version of the weak Bayer--Macr\`i--Toda conjecture\footnote{To deduce higher-rank Donaldson--Thomas theory from the rank-zero case as in \cite{Feyzbakhsh-Thomas:2024}, one needs the strong Bayer--Macr\`i--Toda conjecture \cite[Conjecture~1.3.1]{Bayer-Macri-Toda:2014}, which differs from \eqref{ch3} by replacing the coefficient of $a^2$ from $\frac{1}{2}$ to $\frac{1}{18}$ and the strict inequality by a non-strict one.} \cite[Conjecture~3.2.7]{Bayer-Macri-Toda:2014}.

\begin{corollary}[= Corollary~\ref{Cor.BMT}]
In the setting of Theorem~\ref{thm-intro-2}, let
$E\in\Coh(X)^{\mathcal Z_b^{(1)}}$
be $\mathcal Z_{a,b}^{(2)}$-semistable and satisfy
\[
H\smash{\widetilde\ch}_2^{bH}(E)
=
\frac{a^2}{6}H^3\smash{\widetilde\ch}_0^{bH}(E).
\]
Then 
\begin{equation}\label{ch3}
\smash{\widetilde\ch}_3^{bH}(E)
<
\frac{a^2}{2}H^2\smash{\widetilde\ch}_1^{bH}(E).   
\end{equation}
\end{corollary}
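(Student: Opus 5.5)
The plan is to use Theorem~\ref{thm-intro-2}: the double-tilted heart equals the heart $\cA^X_{a,b}$ of the Bridgeland stability condition $\sigma^X_{a,b}$, whose central charge is $Z^X_{a,b}$ from \eqref{physical-central}. The key observation is that a Bridgeland stability function on $\cA^X_{a,b}$ cannot vanish on any nonzero object and must take values in $\HH\cup\RR_{<0}$. Since $E\in\Coh(X)^{\mathcal Z_b^{(1)}}$ is $\mathcal Z^{(2)}_{a,b}$-semistable, it lies, up to a shift by $[1]$, in the double-tilted heart, and hence in $\cA^X_{a,b}$.

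First I will expand $Z^X_{a,b}$ in terms of $\smash{\widetilde\ch}^{bH}$. Writing $e^{-(b+ia)H}=e^{-bH}e^{-iaH}$ and integrating over the threefold gives
\[
Z^X_{a,b}(E) = -\smash{\widetilde\ch}_3^{bH} + \frac{a^2}{2}H^2\smash{\widetilde\ch}_1^{bH} + i\Bigl(aH\smash{\widetilde\ch}_2^{bH} - \frac{a^3}{6}H^3\smash{\widetilde\ch}_0^{bH}\Bigr).
\]
So $\Im Z^X_{a,b} = a\,\Re\mathcal Z^{(2)}_{a,b}$ up to sign. The hypothesis $H\smash{\widetilde\ch}_2^{bH}(E)=\frac{a^2}{6}H^3\smash{\widetilde\ch}_0^{bH}(E)$ is exactly $\Re\mathcal Z^{(2)}_{a,b}(E)=0$, so $\Im Z^X_{a,b}(E)=0$. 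I need to check the sign conventions so that this matches the paper's normalization of the second tilt.

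Next, I will place $E$ in the heart. In the second tilt, the torsion part consists of $\mathcal Z^{(2)}$-semistable objects with $\Re\mathcal Z^{(2)}\ge 0$ or $>0$, depending on the convention, i.e.\ phase $\le 1/2$ in the rotated sense. A semistable $E$ with $\Re \mathcal Z^{(2)}_{a,b}(E)=0$ lies in one of the torsion or free classes. Hence either $E$ or $E[1]$ lies in the double-tilt heart, which by Theorem~\ref{thm-intro-2} equals $\cA^X_{a,b}$. I must also handle the degenerate case $\mathcal Z^{(2)}_{a,b}(E)=0$. There $\Im\mathcal Z^{(2)}=H^2\smash{\widetilde\ch}^{bH}_1=0$ and $E$ is "massless". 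These objects must lie in the torsion part, so $E\in\cA^X_{a,b}$, and I will treat this as part of the same case.

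Since $\Im Z^X_{a,b}=0$ on an object of the heart (and $E\ne0$), the stability function forces $Z^X_{a,b}(E)\in\RR_{<0}$, i.e.\ $\smash{\widetilde\ch}_3^{bH}(E) > \frac{a^2}{2}H^2\smash{\widetilde\ch}_1^{bH}(E)$ for $E$ itself. The inequality \eqref{ch3} as stated has the opposite direction. This would follow if $E[1]$ is the object in the heart, which it is when $E$ falls in the free part, the usual BMT convention where $\nu$-semistable objects with $\nu=0$... The main obstacle is this bookkeeping. I must determine, from the paper's precise definition of the tilt (which side $\Re=0$ goes to) and the sign of $Z^X$ relative to $\mathcal Z^{(2)}$, whether $E$ or $E[1]$ lies in $\cA^X_{a,b}$. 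I must also confirm that this yields the stated strict inequality, noting that strictness comes from $Z\neq 0$ on nonzero objects. I would first carefully recompute the sign of $\Im Z^X$ versus $\Re\mathcal Z^{(2)}$, and adjust the shift accordingly.
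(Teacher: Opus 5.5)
Your strategy is the right one, and it is how the paper gets the corollary: read it off from Theorem~\ref{thm-main-section3} using the fact that $Z^X_{a,b}$ maps nonzero objects of $\cP^X_{a,b}(0,1]$ into $\HH\cup\RR_{<0}$. The gap is that you stop at the one step with content, namely deciding whether $E$ or $E[1]$ lies in the heart. Where you do commit to a convention, it is backwards. By \eqref{second pair}, $\cF'_{a,b}$ consists of objects with $\nu^+_{a,b}\le 0$. Since $\nu_{a,b}=-\Re\mathcal Z^{(2)}_{a,b}/\Im\mathcal Z^{(2)}_{a,b}$, the torsion class $\cT'_{a,b}$ is the side $\Re\mathcal Z^{(2)}_{a,b}<0$ (together with $\nu_{a,b}=+\infty$), not $\Re\mathcal Z^{(2)}_{a,b}\ge 0$. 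The boundary value $\nu_{a,b}=0$ therefore goes to the free class. A $\nu_{a,b}$-semistable $E$ with $\nu_{a,b}(E)=0$ has $\nu^+_{a,b}(E)=0$, so $E\in\cF'_{a,b}$ and $E[1]\in\cP^X_{a,b}(0,1]$. Next, $\Im Z^X_{a,b}(E[1])=-a\bigl(H\smash{\widetilde\ch}^{bH}_2(E)-\frac{a^2}{6}H^3\smash{\widetilde\ch}^{bH}_0(E)\bigr)=0$, so $Z^X_{a,b}(E[1])\in\RR_{<0}$. Equivalently, $\Re Z^X_{a,b}(E)=-\smash{\widetilde\ch}^{bH}_3(E)+\frac{a^2}{2}H^2\smash{\widetilde\ch}^{bH}_1(E)>0$, which is \eqref{ch3}. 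The inequality is strict because $Z^X_{a,b}(E[1])\neq 0$. This short computation is the whole proof; without it your argument does not fix the direction of the inequality.

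Your plan for the degenerate case $\mathcal Z^{(2)}_{a,b}(E)=0$ would fail. Such an $E$ has $\nu_{a,b}(E)=+\infty$, so it lies in $\cT'_{a,b}$ and hence in $\cA^X_{a,b}$ itself. The same argument then gives the \emph{reverse} inequality $\smash{\widetilde\ch}^{bH}_3(E)>\frac{a^2}{2}H^2\smash{\widetilde\ch}^{bH}_1(E)$. Concretely, take $E=\cO_x$: it lies in $\Coh^{B_{\!X}}(X)$ and satisfies $H\smash{\widetilde\ch}^{bH}_2=\frac{a^2}{6}H^3\smash{\widetilde\ch}^{bH}_0$ since both sides vanish, yet $\smash{\widetilde\ch}^{bH}_3(\cO_x)=1>0$. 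So this case cannot be ``treated as part of the same case''; the hypothesis must exclude it. In the precise formulation, Corollary~\ref{Cor.BMT}, the assumption is $\nu_{a,b}(E)=0$. This presupposes $H^2\smash{\widetilde\ch}^{bH}_1(E)\neq 0$, hence $H^2\smash{\widetilde\ch}^{bH}_1(E)>0$ because $E\in\Coh^{B_{\!X}}(X)$. You should read the statement this way and say so explicitly.
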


A remarkable consequence of the works of Polishchuk \cite{Pol07}, Liu \cite{Liu21}, and Li \cite{Li26} is a reversal of the traditional philosophy behind proving Castelnuovo-type inequalities such as the BMT conjecture. Previously, one first exploited the geometry of the underlying variety to establish such inequalities and then deduced the existence of Bridgeland stability conditions. In contrast, these works provide a purely categorical construction of Bridgeland stability conditions on $\Db(X)$, from which Castelnuovo-type inequalities follow as consequences. 

\subsection*{Acknowledgements} We are grateful to Arend Bayer, Chunyi Li, Yucheng Liu, Zhiyu Liu, and Richard Thomas for many helpful discussions. Both authors were supported by the Royal Society through the University Research Fellowship URF/R1/231191.

\bigskip

\section{stability conditions on smooth projective varieties}\label{section 1}
In this section, we begin by studying a two-dimensional slice of the Bridgeland stability conditions on products of elliptic curves constructed by Liu \cite{Liu21}, obtaining an explicit upper bound for the distance between two stability conditions (Theorem~\ref{thm:distance}). We then recall Li's descent of these stability conditions to $\PP^n$ \cite{Li26}. Our main result in this section is Theorem~\ref{thm-intro-1} from the Introduction (proved as Corollary~\ref{cor:large-volume}), which identifies a family of stability conditions on $\PP^n$ satisfying the phase condition required in Li's restriction theorem for inducing Bridgeland stability conditions on smooth projective subvarieties of~$\PP^n$. Throughout the paper, all varieties are assumed to be defined over the field of complex numbers $\CC$, and all functors between derived categories are derived.


\subsection{Stability conditions on $C^n$} The story begins with Yucheng Liu’s construction of stability conditions on products of curves, which was subsequently further developed by several authors. The following theorem summarizes the statement that we will need in this paper.

\begin{theorem}[\cite{Liu21,FLZ22,LMPSZ}]\label{symmetric stability}
Let $C$ be an elliptic curve, and let $\cO_C(1)$ be an ample line bundle of degree one on $C$.
Then, for any $n \geq 1$, there exists a continuous family of stability conditions
\begin{equation}
    \sigma_{a,b}^{(n)} \coloneqq (\cP_{a,b}^{(n)}, Z_{a,b}^{(n)})
\end{equation}
on $\Db(C^n)$, parametrized by $a \in \Q_{>0}$ and $b \in \Q$, such that:
\begin{itemize}
    \item the central charge is given by
    \begin{equation}
        Z_{a,b}^{(n)}(-)
        =
        -\int_{C^n} e^{-(b+ia)\widetilde H}  \ch(-),
    \end{equation}
    where
    \[
        \widetilde H
        \coloneqq
        c_1\!\left(
        \cO_C(1)\boxtimes \cdots \boxtimes \cO_C(1)
        \right);
    \]
    
    \item all skyscraper sheaves are stable of phase $1$;
    
    \item every stability condition $\sigma_{a,b}$ is $(\ZZ/2\ZZ)^n \rtimes \fS_n$-invariant, where $(\ZZ/2\ZZ)^n \rtimes \fS_n$ acts on $C^n$ via factorwise involutions and permutations.
\end{itemize}
\end{theorem}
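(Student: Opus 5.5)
The plan is to build the family inductively in $n$, using Liu's product construction, and then to obtain invariance from the local injectivity of the forgetful map to central charges.

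The key observation is that the central charge factorizes. Write $\widetilde H=h_1+\dots+h_n$ with $h_i=\mathrm{pr}_i^*c_1(\cO_C(1))$. By the Künneth decomposition of $\ch$, for exterior products $F_1\boxtimes\cdots\boxtimes F_n$ we get
\[
Z^{(n)}_{a,b}(F_1\boxtimes\cdots\boxtimes F_n)=(-1)^{n+1}\prod_i Z^{(1)}_{a,b}(F_i).
\]
Here $Z^{(1)}_{a,b}(F)=-\deg F+(b+ia)\rk F$. For $n=1$ this is, after the harmless normalisation, the standard slope stability condition on $\Coh(C)$ with skyscrapers of phase $1$; the condition $a>0$ puts vector bundles in the upper half plane. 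This settles the base case.

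For the inductive step I would invoke Liu's theorem. Given a stability condition $\sigma$ on $Y=C^{n-1}$ satisfying the support property, and the curve $C$, it produces a stability condition on $C\times Y$. Its central charge is the "product" of the one on $C$ with the one on $Y$, and its heart is obtained by tilting the category of objects whose restrictions to fibres $\{p\}\times Y$ lie in the heart of $\sigma$. I would apply this with $\sigma=\sigma^{(n-1)}_{a,b}$. The factorization above identifies the resulting central charge with $Z^{(n)}_{a,b}$. The support property, and continuity in $(a,b)$, come from the support property in \cite{FLZ22}. Stability of skyscrapers of phase $1$ is inherited fibrewise: a skyscraper $\cO_{(p,y)}$ is $\cO_p\boxtimes\cO_y$, both factors are stable of phase $1$, and the product construction places it in the heart. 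A subobject would have to be supported at the point, and so is a multiple of it.

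Next comes invariance. Let $g\in(\ZZ/2\ZZ)^n\rtimes\fS_n$, where the involution is $x\mapsto -x$ for a group law with origin $p$ and $\cO_C(1)=\cO(p)$. Then $g^*\widetilde H=\widetilde H$, and $\ch$ commutes with pullback, so $g\cdot\sigma_{a,b}$ has the same central charge $Z^{(n)}_{a,b}$. Moreover $(a,b)\mapsto g\cdot\sigma_{a,b}$ is again a continuous family of stability conditions with skyscrapers stable of phase $1$. The forgetful map $\mathrm{Stab}\to\Hom(\K_{\mathrm{num}},\CC)$ is a local homeomorphism. Hence two continuous lifts of the same path of central charges that agree at one point agree everywhere; the parameter space $\Q_{>0}\times\Q$ is handled via its real closure and continuity. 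So it suffices to find one point where $g\cdot\sigma=\sigma$.

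The main obstacle is this base point. I would try a uniqueness statement for "geometric" stability conditions in the large volume limit $a\gg0$. I expect that for fixed $b$ and $a$ large, a stability condition with central charge $Z^{(n)}_{a,b}$, with skyscrapers stable of phase $1$, and lying in the component of the family, has heart determined by the central charge alone. The model is Bridgeland's surface argument, extended in \cite{LMPSZ} via a limit characterization in which $\sigma_{a,b}$-stable objects become Gieseker-type stable as $a\to\infty$. Failing a clean uniqueness statement, the alternative is to check directly that every step of Liu's construction is natural. Symmetric-group invariance would then follow from associativity and commutativity of the iterated product construction, and the involutions act factorwise, preserving each one-dimensional slope stability.
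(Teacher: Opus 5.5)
The paper does not prove this statement; it imports it from \cite{Liu21,FLZ22,LMPSZ}. Your skeleton is the right one: iterate Liu's product construction, and match the central charge using $Z^{(n)}_{a,b}(F\boxtimes G)=-Z^{(n-1)}_{a,b}(F)\,Z^{(1)}_{a,b}(G)$, which is consistent with \cite[Lemma~4.4]{LMPSZ}. However, the third bullet, the only part that is not bookkeeping, is not actually proved.

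\textbf{Invariance.} Your naturality alternative reaches only part of the group. Liu's heart on $C^{n-1}\times C$ is defined from $\sigma^{(n-1)}$, an ample line bundle on the last factor, and numerical data. So base change does give, inductively, invariance under the factorwise involutions (with $\cO_C(1)=\cO(p)$, $p$ the origin) and under the copy of $\fS_{n-1}$ permuting the first $n-1$ factors. No transposition involving the last factor is reached this way. Liu's construction treats that factor asymmetrically: the heart is built fibrewise over it and then tilted. ``Commutativity and associativity of the iterated product'' is therefore not a formal property; it is exactly what has to be proved.

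The missing ingredient is a uniqueness theorem: stability conditions of this kind are determined by their central charge. The paper uses precisely such a statement, \cite[Theorem~1.1]{LMPSZ}, in the proof of Corollary~\ref{cor:large-volume}, to identify $\sigma^{(n)}_{2a,2b}\otimes\cO_{C^n}(2m\widetilde H)$ with $\sigma^{(n)}_{2a,2b+2m}$. With it, invariance follows at each $(a,b)$ separately. The central charge of $g\cdot\sigma_{a,b}$ is $Z^{(n)}_{a,b}$ because $g^*\widetilde H=\widetilde H$. It is again geometric, and $g^*$ preserves $\Lambda_n$ because it permutes the classes $H_{j_1}\cdots H_{j_\ell}$. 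Hence $g\cdot\sigma_{a,b}=\sigma_{a,b}$, with no base point needed. Your large-volume uniqueness is only stated as an expectation, so it does not fill this hole.

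\textbf{Base-point reduction.} This step fails as stated. The parameter space $\Q_{>0}\times\Q$ is totally disconnected. So ``two lifts of the same central charges that agree at one point agree everywhere'' only shows that the agreement locus is open and closed, which gives nothing. Passing to real parameters ``by continuity'' is not free either. The restriction to rational parameters reflects the discreteness hypothesis on the base central charge in Liu's construction. The paper explicitly treats the family over $\R_{>0}\times\R$ as a separate result, deferred to \cite{LLL+}.

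\textbf{Skyscraper sheaves.} Your argument for the second bullet does not work in a tilted heart. A subobject of $\cO_x$ in Liu's heart is a complex and need not be a sheaf supported at $x$; even Bridgeland's tilt on surfaces needs an argument here. The clean route is the finite-Albanese theorem of \cite{FLZ22}. The Albanese map of $C^n$ is an isomorphism, so every stability condition on $\Db(C^n)$ is geometric. Since $Z^{(n)}_{a,b}(\cO_x)=-1$ and $\cO_x$ lies in the heart, the common phase of the skyscrapers is $1$.
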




Let
\[
q_i \colon C^n \to C
\]
be the projection onto the $i$-th factor, and define
\[
H_i \coloneqq c_1\!\left(q_i^*\cO_C(x)\right)\in N^1(C^n),
\]
where $x\in C$ is any point. The divisor class $H_i$ is represented by
\[
q_i^*(x)
=
C^{i-1}\times\{x\}\times C^{n-i},
\]
which, by abuse of notation, we denote simply by $C^{n-1}$ whenever the choice of factor and point is irrelevant.

By construction, the central charge $Z_{a,b}^{(n)}$ factors through the map
\[
v_n \colon \K(C^n) \to \Lambda_n \simeq \ZZ^{\oplus 2^n},
\]
where
\begin{equation}
    v_n(E)
    =
    \left(
    H_{j_1}\cdots H_{j_\ell}\,\ch_{n-\ell}(E)
    \right)_{1 \leq j_1 < \cdots < j_\ell \leq n}.
\end{equation}
Moreover, the stability conditions $\sigma_{a,b}^{(n)}$ satisfy the support property with respect to the lattice $\Lambda_n$; see \cite[Section~4.2]{LMPSZ}.

As a consequence of the construction of stability conditions on products of curves, we obtain the following mild restriction lemma.

\begin{lemma} \label{lem:restriction}
  For any $\sigma_{a,b}^{(n)}$-semistable object $E \in \Db(C^n)$, we have
\begin{equation}\label{eq of lem}
    a\, \bigl| Z^{(n-1)}_{a,b}(E|_{C^{n-1}}) \bigr|
    \leq
    \bigl| Z^{(n)}_{a,b}(E) \bigr| ,
\end{equation}
where we identify $C^n = C^{n-1}\times C$. 
\end{lemma}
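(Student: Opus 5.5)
The plan is to split the central charge $Z^{(n)}_{a,b}$ along the last factor, so that it becomes a ``curve-like'' central charge whose rank and degree are $Z^{(n-1)}_{a,b}$-values. I then use the inductive form of Liu's construction to control the ratio of the two pieces on semistable objects.

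\emph{Step 1 (decomposing the central charge).} Write $C^n=C^{n-1}\times C$ with projection $p\colon C^n\to C^{n-1}$. Let $H'$ be the pullback of the polarization of $C^{n-1}$, so that $\widetilde H=H'+H_n$ and $H_n^2=0$. Then
\[
e^{-(b+ia)\widetilde H}=e^{-(b+ia)H'}\bigl(1-(b+ia)H_n\bigr).
\]
Since $H_n$ is represented by $C^{n-1}\times\{x\}$, we get $\int_{C^n}H_n\cdot\alpha=\int_{C^{n-1}}\alpha|_{C^{n-1}}$. For the remaining term, Grothendieck--Riemann--Roch applies with $\td(C)=1$ because $C$ is elliptic, so $\int_{C^n}e^{-(b+ia)H'}\ch(E)=\int_{C^{n-1}}e^{-(b+ia)H}\ch(p_*E)$. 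Together these give
\[
Z^{(n)}_{a,b}(E)=Z^{(n-1)}_{a,b}(p_*E)+(b+ia)\,Z^{(n-1)}_{a,b}(E|_{C^{n-1}}).
\]
Put $R(E)\coloneqq Z^{(n-1)}_{a,b}(E|_{C^{n-1}})$ and $D(E)\coloneqq -Z^{(n-1)}_{a,b}(p_*E)$. Then $Z^{(n)}=-D+(b+ia)R$, which is formally the central charge of a curve with ``rank'' $R$ and ``degree'' $D$. The claim is trivial when $R(E)=0$. Otherwise, dividing by $|R|$, it is equivalent to saying that $w\coloneqq D(E)/R(E)\in\CC$ does not lie in the open disc of radius $a$ centred at $b+ia$. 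If $w$ were real this would be automatic.

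\emph{Step 2 (input from the construction).} Liu's construction \cite{Liu21,LMPSZ} builds $\sigma^{(n)}_{a,b}$ from $\sigma^{(n-1)}_{a,b}$ on $C^{n-1}\times C$. I would use two features of it. First, restriction to a fibre and pushforward $p_*$ (suitably shifted) of a $\sigma^{(n)}$-semistable object are controlled by the heart of $\sigma^{(n-1)}_{a,b}$: the heart is a tilt of the ``relative'' heart $\{E : E|_{C^{n-1}\times\{y\}}\in\cP^{(n-1)}(0,1]\}$. Second, semistable objects satisfy the quadratic support-property inequality of \cite[Section~4.2]{LMPSZ}. I would express that inequality in terms of $R$ and $D$ via Step 1. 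The aim is to show that it says exactly $\operatorname{Im}\bigl(\overline{R}D\bigr)\le 0$, or equivalently that $w$ lies in a closed half-plane bounded by a line through $b$ and missing the open disc. The expected shape is a Bogomolov-type statement $\operatorname{Im}(\overline{R(E)}D(E))\le 0$ for semistable $E$, analogous to ``$\deg$ and $\rk$ lie on a ray'' for curves.

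\emph{Step 3 (reductions and conclusion).} Tensoring by $q_n^*\cO_C(x)$ preserves the family, shifting $b\mapsto b-1$ in the $C$-direction. Using this, and the fact that $E$ may be assumed stable (the inequality is additive-compatible only in the sense of phases, so I would argue directly for semistable $E$ of fixed phase), I would normalize the problem. The conclusion is then elementary plane geometry: if $w$ lies in the half-plane of Step 2, then $|{-w}+b+ia|\ge a$, which is exactly the inequality of Lemma~\ref{lem:restriction}.

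The main obstacle is Step 2. One must extract from Liu's inductive construction a clean inequality relating $Z^{(n-1)}(p_*E)$ and $Z^{(n-1)}(E|_{C^{n-1}})$ for semistable $E$. I would first try to obtain it by applying the support property of $\sigma^{(n)}_{a,b}$ to $E$ and to its twists $E\otimes q_n^*\cO_C(kx)$, and then letting $k\to\pm\infty$. These twists remain semistable up to the group action, and the limit isolates the leading coefficient, which is governed by $R$.
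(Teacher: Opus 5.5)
Your overall architecture matches the paper's. You split $Z^{(n)}_{a,b}$ along the last factor into a curve-like central charge whose ``rank'' and ``degree'' are $Z^{(n-1)}_{a,b}(E|_{C^{n-1}})$ and $Z^{(n-1)}_{a,b}(p_*E)$. You then reduce the lemma to a sign condition on the cross term and finish by plane geometry.

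Your Grothendieck--Riemann--Roch derivation of the splitting is a fine replacement for the citation of \cite[Lemma~4.4]{LMPSZ}, but it has a sign slip. Since $\int_{C^{n-1}}e^{-(b+ia)H}\ch(E|_{C^{n-1}})=-Z^{(n-1)}_{a,b}(E|_{C^{n-1}})$, the correct identity is
\[
Z^{(n)}_{a,b}(E)=Z^{(n-1)}_{a,b}(p_*E)-(b+ia)\,Z^{(n-1)}_{a,b}(E|_{C^{n-1}}).
\]
You can check this on $E=\cO_{C^2}$: there $Z^{(2)}_{a,b}(\cO_{C^2})=-(b+ia)^2$, while your formula gives $+(b+ia)^2$. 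With your $R$ and $D$, the Step~2 target $\Im(\overline{R}D)\le 0$ is therefore the reverse of the inequality that actually holds. Once $R$ is replaced by $-Z^{(n-1)}_{a,b}(E|_{C^{n-1}})$, your target becomes exactly $B(E)C(E)-A(E)D(E)\ge 0$. Here I use the paper's notation $A+iC=Z^{(n-1)}_{a,b}(E|_{C^{n-1}})$ and $B+iD=Z^{(n-1)}_{a,b}(p_*E)$, so this $D$ is the paper's, not yours.

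The genuine gap is Step~2, which carries the entire content of the lemma. Once $BC-AD\ge 0$ is known for every $\sigma^{(n)}_{a,b}$-semistable $E$, the proof is one line: the identity $\Im\bigl(\overline{Z^{(n)}_{a,b}(E)}\,(A+iC)\bigr)=a\,|A+iC|^2+(BC-AD)$ plus Cauchy--Schwarz gives the claim. No normalisation or reduction to stable objects is needed. You do not prove this inequality, and the route you sketch cannot produce it:
\begin{itemize}
\item The object $E\otimes q_n^*\cO_C(kx)$ is semistable only for $\sigma^{(n)}_{a,b}\otimes q_n^*\cO_C(kx)$, whose $B$-field is $bH'+(b+k)H_n$.
\item That stability condition is not in the family $\sigma^{(n)}_{a,b}$, and the action of $(\ZZ/2\ZZ)^n\rtimes\fS_n$ does not move it into the family. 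Only twists by multiples of $\widetilde H$ shift $b$, so your Step~3 claim that twisting by $q_n^*\cO_C(x)$ preserves the family is also false.
\item You therefore cannot apply the support property of $\sigma^{(n)}_{a,b}$ to the twists. Transporting back, the support property of the twisted condition on the twisted object is the same inequality as for $E$ itself, so letting $k\to\pm\infty$ extracts nothing new.
\end{itemize}
More fundamentally, $BC-AD\ge0$ is not a formal consequence of having some support-property quadratic form. It is a Bogomolov-type inequality internal to Liu's inductive construction. The paper imports it as \cite[Lemma~5.7]{Liu21} with $\eta=0$, and that is the missing input in your proposal.
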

\begin{proof}
    In the proof, we fix $(a,b)$ and suppress these indices from the notation. We denote the central charge of the stability condition $\sigma_{a,b}^{(n-1)}$ on $C^{n-1}$ (resp.\ $\sigma_{a,b}^{(n)}$ on $C^n$) by $Z^{(n-1)}$ (resp.\ $Z^{(n)}$). We denote the two projections by 
\[
p \colon C^n \to C^{n-1},
\qquad
q_n \colon C^n \to C.
\]

Following \cite[Section 4]{Liu21}, for any object $E \in \Db(C^{n-1}\times C)$, we write
\begin{equation}\label{expression}
    Z^{(n-1)}\!\left(
    p_{*}\bigl(E \otimes q_n^*\cO_C(k)\bigr)
    \right)
    =
    A(E)k + B(E) + i\bigl(C(E)k + D(E)\bigr),
\end{equation}
where
\[
A,B,C,D \colon K(C^n)\to \mathbb{R}
\]
are group homomorphisms.
    We have
    \begin{equation}
        v_{n-1}(E|_{C^{n-1}}) = v_{n-1}(p_*E ) - v_{n-1}(p_*(E(-H_n))).
    \end{equation}
    In particular, we get 
    \begin{align}
         Z^{(n-1)}(E|_{C^{n-1}}) = &   Z^{(n-1)}(p_*E) -  Z^{(n-1)}(p_*(E(-H_n)))\\
         = & A(E) +i C(E) .
    \end{align}
    By \cite[Lemma 4.4]{LMPSZ}, the central charge on $C^n$ takes the form
    \begin{equation}\label{central}
        Z^{(n)}(E) = -(b+ia)(A(E) +iC(E)) +(B(E) + iD(E)) ,
    \end{equation}
    so we know 
    \begin{align*}
        B(E) C(E) - A(E) D(E) = \,& \Im \left( \overline{Z^{(n)}(E)} \, Z^{(n-1)}(E|_{C^{n-1}}) \right) - a |Z^{(n-1)}(E|_{C^{n-1}})|^2 \\
        \leq \, & |Z^{(n)}(E)\, Z^{(n-1)}(E|_{C^{n-1}})| - a |Z^{(n-1)}(E|_{C^{n-1}})|^2.
    \end{align*}
    Applying \cite[Lemma~5.7]{Liu21} with $\eta=0$ implies that\footnote{In \cite{Liu21} there is no parameter for this $b$, but this does not change anything in the proofs.} $B(E) C(E) -A(E) D(E) \geq 0$ for any $\sigma_{a,b}^{(n)}$-semistable object $E$ and so the claim follows. 
\end{proof}

\begin{remark}
  The inequality \eqref{eq of lem} in Lemma~\ref{lem:restriction} is in fact sharp, and equality can be achieved by $\sigma^{(n)}_{a,b}$-semistable objects. Indeed, let $F$ be a slope-semistable vector bundle on $C$ of rank $r$ and degree $d$ satisfying
\[
\frac{d}{r}=b,
\]
which always exists; see, for example, \cite{Ati57}. Let 
\[
i \colon C \hookrightarrow C^n,\qquad
i(y)=(z_1,\ldots,z_{n-1},y),
\]
where $z_1,\ldots,z_{n-1}\in C$ are fixed points. By \cite[Theorem~1.7]{cheng2025bridgeland}, the pushforward $i_*F$ is stable with respect to any Bridgeland stability condition on $C^n$. Since $p\circ i$ is the constant map to the point
\[
z\coloneqq (z_1,\ldots,z_{n-1})\in C^{n-1},
\]
we have
\[
p_*\bigl(i_*F\otimes q_n^*\mathcal O_C(k)\bigr)
\simeq
\mathbf{R}\Gamma \left(C,F\otimes\mathcal O_C(k)\right)\otimes\mathcal O_z.
\]
Therefore,
\[
Z_{a, b}^{(n-1)}
\bigl(
p_*(i_*F\otimes q_n^*\mathcal O_C(k))
\bigr)
=
-(kr+d),
\]
and hence, using \eqref{expression},
\[
A(i_*F)=-r,\qquad
B(i_*F)=-d,\qquad
C(i_*F)=D(i_*F)=0.
\]
Substituting these into \eqref{central}, we obtain
\begin{equation}\label{eqq.1}
Z^{(n)}(i_*F)
=
r(b+ia)-d
=
iar.
\end{equation}
Now let
\[
j\colon C^{n-1}\hookrightarrow C^n,\qquad
j(x_1,\ldots,x_{n-1})
=
(x_1,\ldots,x_{n-1},z_n),
\]
where $z_n\in C$ is fixed. Then
\[
j^*i_*F
\simeq
F_{z_n}\otimes\mathcal O_{z},
\]
and therefore
\begin{equation}\label{eqq.2}
Z^{(n-1)}\!\left(j^*i_*F\right)
=
-r.
\end{equation}
Comparing \eqref{eqq.1} and \eqref{eqq.2} shows that equality holds in \eqref{eq of lem} for $E=i_*F$.

\end{remark}


In Li's construction \cite{li_real_2025} of stability conditions on smooth projective varieties, a key step is to control the phase change of objects under tensor products. To achieve this, we use Bridgeland's generalised metric on the stability manifold \cite[Proposition~8.1]{Bri07}. Let $\sigma_0,\sigma_1$ be two stability conditions on a triangulated category $\cD$. 
Consider the normalized generalized metric (which differs slightly from Bridgeland's original definition) defined by
\begin{equation}\label{eq:dist}
    d(\sigma_0,\sigma_1)
    =
    \sup_{0\neq E\in\cD}
    \left\{
    |\phi_1^+(E)-\phi_0^+(E)|,
    \,
    |\phi_1^-(E)-\phi_0^-(E)|,
    \,
    \frac{1}{\uppi} \left|
    \log \frac{m_1(E)}{m_0(E)}
    \right|
    \right\}.
\end{equation}
Here, $m_i(E)=\sum_j | Z(E_i^j) |$ denotes the mass of $E$ with respect to $\sigma_i$, where $E_i^j$ are its HN factors. The main result of this section is the following theorem.

\begin{theorem}\label{thm:distance}
    For any $\Delta b \geq 0$, we have
    \begin{equation}
        d( \sigma_{a,b}^{(n)}, \sigma^{(n)}_{a,b+ \Delta b} ) \leq \frac{n}{\uppi} \frac{\Delta b}{a}.
    \end{equation}
\end{theorem}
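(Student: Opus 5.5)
The plan is to differentiate the family $b\mapsto\sigma^{(n)}_{a,b}$ in $b$, use Lemma~\ref{lem:restriction} to bound the derivative of the central charge on semistable objects, and then integrate this infinitesimal bound along the path $t\mapsto\sigma^{(n)}_{a,b+t}$, $t\in[0,\Delta b]$, to bound the generalized metric \eqref{eq:dist}.

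\emph{Step 1: the derivative of the central charge.} Differentiating $Z^{(n)}_{a,b}(E)=-\int_{C^n}e^{-(b+ia)\widetilde H}\ch(E)$ in $b$ gives
\[
\frac{\partial}{\partial b}Z^{(n)}_{a,b}(E)=\int_{C^n}\widetilde H\,e^{-(b+ia)\widetilde H}\ch(E),
\qquad
\widetilde H=\sum_{i=1}^n H_i .
\]
Since $H_i$ is the class of the divisor $q_i^*(x)\cong C^{n-1}$, the $i$-th term is an integral over that divisor. On it $H_i|_{C^{n-1}}=0$, so $\widetilde H$ restricts to the corresponding class $\widetilde H$ on $C^{n-1}$. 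Hence the $i$-th term equals $-Z^{(n-1)}_{a,b}(E|_{q_i^*(x)})$. Lemma~\ref{lem:restriction} is stated for the last factor, and the $\fS_n$-invariance in Theorem~\ref{symmetric stability} moves any factor to the last position. Summing over $i$ therefore gives, for every $\sigma^{(n)}_{a,b}$-semistable $E$,
\[
\Bigl|\tfrac{\partial}{\partial b}Z^{(n)}_{a,b}(E)\Bigr|\le \frac{n}{a}\,\bigl|Z^{(n)}_{a,b}(E)\bigr| .
\]
This holds for all rational $b$, and by continuity of the family it extends to the whole real path.

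\emph{Step 2: local comparison.} Fix $t$ and a small step $\delta>0$, and set $\sigma_s=\sigma^{(n)}_{a,b+s}$. For every $\sigma_t$-semistable $E$, Step 1 gives
\[
|Z_{t+\delta}(E)-Z_t(E)|\le \Bigl(\frac{n\delta}{a}+o(\delta)\Bigr)|Z_t(E)| .
\]
Getting this uniformly in $E$ requires controlling the second derivative, which is again a sum of restrictions to codimension-two slices. I would iterate Lemma~\ref{lem:restriction}, which bounds it by $(n/a)^2|Z_t(E)|$; alternatively one can use the support property with respect to $\Lambda_n$. Since the path is continuous, $d(\sigma_t,\sigma_{t+\delta})\to0$, so Bridgeland's local estimate applies \cite[Lemma~6.4, Proposition~8.1]{Bri07}. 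Concretely, each $\sigma_t$-semistable $E$ has all its $\sigma_{t+\delta}$-HN factors with phases within $\frac{1}{\uppi}\arcsin\bigl(\frac{n\delta}{a}+O(\delta^2)\bigr)$ of $\phi_t(E)$. Its mass also changes by a factor in $\exp\bigl(\pm(\frac{n\delta}{a}+O(\delta^2))\bigr)$. These bounds pass from semistable objects to arbitrary objects via HN filtrations, because $\phi^\pm$ and $m$ are controlled by the extreme and summed HN factors. Thus
\[
d(\sigma_t,\sigma_{t+\delta})\le \frac{1}{\uppi}\frac{n\delta}{a}+O(\delta^2).
\]

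\emph{Step 3: integration and the main obstacle.} Subdivide $[0,\Delta b]$ into $N$ steps and apply the triangle inequality for $d$. This gives
\[
d\bigl(\sigma^{(n)}_{a,b},\sigma^{(n)}_{a,b+\Delta b}\bigr)\le \frac{n}{\uppi}\frac{\Delta b}{a}+O(\Delta b^2/N),
\]
and letting $N\to\infty$ yields the claim.

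I expect Step 2 to be the main obstacle. The mass part of the estimate is not directly in Bridgeland's lemma and requires redoing his argument to track the HN factors with respect to $\sigma_{t+\delta}$. The error term must also be uniform in $E$. If the second-derivative bound turns out to be delicate, I would instead work with masses and phases directly: show that $\log m_s(E)$ and $\phi^\pm_s(E)$ are Lipschitz in $s$, with constants $n/a$ and $n/(\uppi a)$ respectively, by a Dini-derivative argument at each $s$ using only the first-order bound from Step 1.
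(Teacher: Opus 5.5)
Your proposal is correct. Step~1 is the paper's computation, done slightly more carefully. The second equality in \eqref{eq:partial} replaces $\sum_i\int_{C^n}H_i\,e^{-(b+ia)\widetilde H}\ch(E)$ by $n$ times the $i=n$ term. That fails for classes that are not $\fS_n$-symmetric: for $n=2$ and $E=q_1^*\cO_C(x)$ one gets $1-2(b+ia)$ versus $2-2(b+ia)$. Bounding the $n$ slice terms separately by Lemma~\ref{lem:restriction} and $\fS_n$-invariance, as you do, gives the same constant $n/a$ legitimately.

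The genuine difference is in how you integrate the infinitesimal bound.
\begin{itemize}
\item The paper proves a Finsler-type estimate, Lemma~\ref{lem:finsler}. For a fixed object $F$ it cuts the path into finitely many intervals on which the HN filtration of $F$ is constant. It then integrates $\frac{d}{dt}\phi_t$ and $\frac{d}{dt}\log|Z_t|$ of the individual HN factors. This needs only the first-order bound on semistable objects, has no error terms, and works verbatim along any smooth path, which Remark~\ref{rem:geodesic} exploits. Its price is that it requires a genuine smooth path of stability conditions and finiteness of walls for each $F$ along it.
\item You instead discretize. You use a uniform $O(\delta^2)$ Taylor remainder, then Bridgeland's deformation theorem plus uniqueness to get $d(\sigma_t,\sigma_{t+\delta})\le\frac{n\delta}{\uppi a}+O(\delta^2)$, then the triangle inequality. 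This needs more input: support-property constants and a deformation radius that are uniform along the segment. But it only evaluates the family at the subdivision points, which can be taken rational. So it is better adapted to the family of Theorem~\ref{symmetric stability}, which is only given for $(a,b)\in\QQ_{>0}\times\QQ$.
\end{itemize}

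Two remarks on Step~2.

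First, iterating Lemma~\ref{lem:restriction} does not bound the second derivative. The restriction $E|_{C^{n-1}}$ need not be $\sigma^{(n-1)}_{a,b}$-semistable. Iterating therefore only bounds $\partial_b Z^{(n-1)}_{a,b}(E|_{C^{n-1}})$ by the $\sigma^{(n-1)}_{a,b}$-mass of $E|_{C^{n-1}}$, and Lemma~\ref{lem:restriction} does not control that mass. Use your alternative instead: the remainder is linear in $v_n(E)$ with coefficients $O(\delta^2)$, and $\|v_n(E)\|\le K|Z_t(E)|$ for $\sigma_t$-semistable $E$ by the support property.

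Second, the mass estimate is not really an obstacle.
\begin{itemize}
\item The $\sigma_{t+\delta}$-HN factors of a $\sigma_t$-semistable $E$ have central charges in a sector of half-angle $\uppi\epsilon$. Hence $m_{t+\delta}(E)\le|Z_{t+\delta}(E)|/\cos(\uppi\epsilon)$, with $1/\cos(\uppi\epsilon)=1+O(\delta^2)$.
\item Subadditivity of mass extends this to all objects.
\item The reverse inequality follows by exchanging the roles of $t$ and $t+\delta$, using Step~1 at $t+\delta$.
\end{itemize}
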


To prove the theorem, we first state the following general lemma to control the distance.

\begin{lemma}\label{lem:finsler}
Let $\sigma_t = (\cP_t, Z_t)$ be a smooth path of stability conditions for $t \in [0,1]$ on a triangulated category $\cD$. For any $t\in [0,1]$, define
\[
\lambda(t) \coloneqq \sup \left\{ 
\frac{1}{\uppi}\left| \frac{ \frac{d}{dt}Z_t(E)}{Z_t(E)} \right| 
: 0 \neq E \text{ is } \sigma_t\text{-semistable}
\right\}.
\]
Then
\[
d(\sigma_0, \sigma_1) \le \int_0^1 \lambda(t)\, dt.
\]
\end{lemma}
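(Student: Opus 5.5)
The strategy is to compare the two stability conditions through a chain of intermediate points along the path and bound each link by the local growth rate of the central charges. Concretely, fix $\epsilon>0$ and choose a partition $0=t_0<t_1<\dots<t_N=1$. The aim is to show that for each link
\[
d(\sigma_{t_{k}},\sigma_{t_{k+1}})\le \int_{t_k}^{t_{k+1}}\lambda(t)\,dt+\epsilon (t_{k+1}-t_k).
\]
The triangle inequality for $d$ then gives the claim after letting $\epsilon\to 0$. This local-to-global reduction is standard, since $d$ is a generalized metric by \cite[Proposition~8.1]{Bri07}.

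Each of the three quantities in \eqref{eq:dist} has to be controlled. I would start with the mass term. Consider $f(t)=\log m_t(E)$ for a fixed nonzero $E$.

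First, for $t'$ near $t$, Bridgeland's local homeomorphism result implies that the HN factors of $E$ with respect to $\sigma_t$ can be refined. After refinement they become HN factors for $\sigma_{t'}$, up to a small phase window. A standard consequence is that, for $t'$ close to $t$, the mass $m_{t'}(E)$ is approximated by $\sum_j |Z_{t'}(E^j_t)|$ up to an error of order $o(|t'-t|)$. Here $E^j_t$ are the $\sigma_t$-semistable factors.

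Next, I would differentiate. Using $\bigl|\frac{d}{dt}|Z_t(F)|\bigr|\le |\frac{d}{dt}Z_t(F)|\le \uppi\lambda(t)|Z_t(F)|$ for $\sigma_t$-semistable $F$, summing over the factors gives the upper Dini derivative bound
\[
|D^{\pm} f(t)|\le \uppi\lambda(t).
\]
This yields $\frac{1}{\uppi}|\log (m_1(E)/m_0(E))|\le\int_0^1\lambda$.

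Then I would treat the phases. For a $\sigma_t$-semistable $F$, the phase $\phi_{t'}(F)$ that $F$ has with respect to $Z_{t'}$ satisfies
\[
\Bigl|\frac{d}{dt}\arg Z_t(F)\Bigr|\le \Bigl|\frac{\frac{d}{dt}Z_t(F)}{Z_t(F)}\Bigr|\le \uppi\lambda(t).
\]
So after dividing by $\uppi$, the phase velocity is at most $\lambda(t)$.

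To pass from this velocity bound to $\phi^\pm$, I would use the following fact. If $F$ is $\sigma_t$-semistable and $t'$ is close to $t$, then the $\sigma_{t'}$-HN factors of $F$ have phases within $\sup_{s\in[t,t']}\lambda(s)|t'-t|+o(|t'-t|)$ of $\phi_t(F)$. Granting this, $\phi^+_{t'}(E)$ and $\phi^-_{t'}(E)$ move with Dini derivative at most $\lambda(t)$, and integrating gives the phase bounds. Taking the supremum over $E$ finishes the proof.

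The main obstacle is making this local phase comparison rigorous and uniform in $E$. It is not enough to know how $Z$ of a semistable object moves, since semistable objects can destabilize and their HN factors could a priori spread in phase.

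My first attempt would be to exploit the mass estimate already obtained. Suppose $F$ is $\sigma_t$-semistable of phase $\phi$ and its $\sigma_{t'}$-factors have phases in $[\phi^-,\phi^+]$. The sum of the masses of these factors equals $m_{t'}(F)$. This is at most $e^{\uppi\int\lambda}|Z_t(F)|$ and at least $|Z_{t'}(F)|\ge e^{-\uppi\int\lambda}|Z_t(F)|$. So the factors nearly align in the complex plane, which forces their phase spread to be small, of order $\int_t^{t'}\lambda$.

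To make this exact rather than merely of the right order, I would combine it with Bridgeland's argument from \cite[Lemma~6.4 and Proposition~8.1]{Bri07}. That argument shows that if $\|Z_{t'}-Z_t\|_{\sigma_t}<\sin(\uppi\delta)$, then the slicings are within $\delta$. Applied to the scaled local norm $\lambda(t)|t'-t|$, this bound is sharp to first order, which is what the Dini estimate needs.
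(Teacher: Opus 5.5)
Your route is genuinely different from the paper's.

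The paper fixes an object $F$ and chooses a finite partition of $[0,1]$ on whose open pieces the $\sigma_t$-HN filtration of $F$ is constant. On each piece it integrates $\frac{d}{dt}\phi_t$ and $\frac{d}{dt}\log|Z_t|$ of those fixed factors, which stay semistable on the closed piece. This needs no deformation theory. It does tacitly use that $F$ meets only finitely many walls along the path. That holds for the real-analytic paths $b\mapsto\sigma_{a,b}$ where the lemma is applied, but not automatically for an arbitrary smooth path, which can cross a wall infinitely often.

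You instead prove the infinitesimal bound $\limsup_{h\to 0} d(\sigma_t,\sigma_{t+h})/|h|\le\lambda(t)$ and integrate it, so you need no finiteness of wall-crossings. The price is the quantitative deformation theorem of \cite{Bri07} plus a Dini-derivative integration step. That step is harmless here, since under the support property $\lambda(t)=\frac{1}{\uppi}\|\dot Z_t\|_{\sigma_t}$ is continuous in $t$.

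Your ``first attempt'' via near-alignment of masses does not work and should be dropped. Write $h=|t'-t|$. The bound $|Z_{t'}(F)|\ge(1-O(h))\,m_{t'}(F)$ only controls the phase spread of the $\sigma_{t'}$-factors to order $\sqrt{h}$, because two factors at angle $\theta$ lose relative mass of order $\theta^2$. It gives no control at all over a factor of very small mass. It is also circular. Your mass estimate $m_{t'}(E)=\sum_j|Z_{t'}(E^j_t)|+o(h)$ already needs the $\sigma_{t'}$-factors of each $E^j_t$ to have phase spread $o(\sqrt{h})$, since the defect $m_{t'}(E^j_t)-|Z_{t'}(E^j_t)|$ is quadratic in that spread. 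Mere continuity of the path does not give this.

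The fix is to reverse the order and let Bridgeland's estimate do all the work:
\begin{enumerate}
\item Apply \cite[Theorem~7.1]{Bri07} to $\sigma_t$ with $W=Z_{t'}$.
\item Identify the resulting stability condition with $\sigma_{t'}$ using the uniqueness lemma \cite[Lemma~6.4]{Bri07}. This is legitimate since $d(\cP_t,\cP_{t'})\to 0$.
\item Conclude $d(\cP_t,\cP_{t'})\le\frac{1}{\uppi}\arcsin\|Z_{t'}-Z_t\|_{\sigma_t}=\lambda(t)h+o(h)$. The error term is uniform over objects because the support property makes $\|\cdot\|_{\sigma_t}$ a finite norm on $\Hom(\Lambda,\CC)$.
\end{enumerate}
This gives the bounds for $\phi^{\pm}$ directly. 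Your refinement argument for the mass then goes through, each defect now being $O(h^2)$.
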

\begin{proof}
    We write $Z=|Z|e^{i \uppi \phi}$, then for any $\sigma_t$-semistable object $E$, we have 
  \begin{align}\label{eq:hypotenuse}
     \frac{\frac{d}{dt}Z_t(E)}{Z_t(E)} =  \frac{d}{dt} \log Z_t(E) =\, & \frac{d}{dt} \log |Z_t(E)| + i \uppi \frac{d}{dt} \phi_t(E) \\
     = \, &   \frac{\frac{d}{dt}|Z_t(E)|}{|Z_t(E)|} + i \uppi \frac{d}{dt} \phi_t(E) .
     \end{align}

  For each nonzero object $F \in \cD$, choose $ 0=t_0<t_1<\cdots<t_k=$
such that the Harder--Narasimhan filtration of $F$ with respect to $\sigma_t$ remains constant for all $ t\in (t_{i-1},t_i)$. We denote by 
\[
E_i^1, E_i^2, \dots,E_i^{\ell_i} 
\]
the Harder--Narasimhan factors of $F$ with respect to $\sigma_t$ for $t\in (t_{i-1},t_i)$, and by $E_i^+$ (resp.\ $E_i^-$) the factor of maximal (resp.\ minimal) phase.
    Then, for each $i$, we have
    \begin{equation} 
        |\phi^+_{t_{i}}(F) - \phi^+_{t_{i-1}}(F) | = |\phi_{t_{i}}(E_i^+) - \phi_{t_{i-1}}(E_i^+) | \leq \int_{t_{i-1}}^{t_{i}} \left|\frac{d}{dt} \phi_t(E_i^+)\right| dt \overset{\eqref{eq:hypotenuse}}{\leq} \frac{1}{\uppi} \int_{t_{i-1}}^{t_{i}} \left|\frac{\frac{d}{dt}Z_t(E_i^+)}{Z_t(E_i^+)}\right|  dt .
    \end{equation}
    Hence
    \begin{equation} \label{eq:phi+}
        | \phi^+_1(F) - \phi^+_0(F) |  \leq \sum_{i=1}^k |\phi^+_{t_{i}}(F) - \phi^+_{t_{i-1}}(F)| \leq  \int_0^1 \lambda(t)\, dt .
    \end{equation}
    The same argument applies to $\phi^-(F)$. 
    Similarly, for the mass we have
    \begin{equation} 
        \log \frac{m_{t_{i}}(F)}{m_{t_{i-1}}(F)} = \int_{t_{i-1}}^{t_{i}} \frac{d}{dt} \log m_t(F) dt =\int_{t_{i-1}}^{t_{i}} \frac{\frac{d}{dt} m_t(F)}{m_t(F)} dt 
    \end{equation}
    and
    \begin{equation}
        \left|\frac{d}{dt} m_t(F)\right| \leq \sum_{j} \frac{d}{dt} |Z_t(E^j_i)| \overset{\eqref{eq:hypotenuse}}{\leq} \sum_{j} \left| Z_t(E^j_i) \, \frac{\frac{d}{dt}Z_t(E^j_i)}{Z_t(E^j_i)} \right| \leq \uppi \, m_t(F) \lambda(t)
        .
    \end{equation}
    Therefore, we have
    \begin{equation} \label{eq:mass}
        \left| \log \frac{m_1(F)}{m_0(F)} \right| \leq \sum_{i=1}^k \left| \log \frac{m_{t_{i}}(F)}{m_{t_{i-1}}(F)} \right| \leq \uppi \int_{0}^{1} 
        \lambda(t)\,dt
    \end{equation}
    which completes the proof. 
\end{proof}

\begin{proof}[Proof of Theorem~\ref{thm:distance}]
    
    Consider the horizontal line in the $(b,a)$-plane.
    A direct computation yields
  \begin{align}\label{eq:partial}
    \frac{\partial}{\partial b} Z_{a,b}^{(n)}(E)
    &=
    \int_{C^n}
    \widetilde H \, e^{-(b+ia)\widetilde H} \ch(E)
    \\
    &=
    n\int_{C^n}
    H_n \, e^{-(b+ia)(\widetilde H-H_n)}
     \ch(E)
    \\
    &=
    n\int_{C^{n-1}}
    e^{-(b+ia)(\widetilde H-H_n)}
     \ch(E|_{C^{n-1}})
    \\
    &=
    -\,n\, Z_{a,b}^{(n-1)}(E|_{C^{n-1}}),
\end{align}
where we identify $C^n = C^{n-1}\times C$.     
    So, for all $t \in [0,\Delta b]$ and all $\sigma_{a,b+t}$-semistable objects $E \in \Db(C^n)$, the Lemma \ref{lem:restriction} implies
    \begin{equation}
        \left| \frac{\frac{\partial}{\partial b}Z_{a,b}^{(n)}(E)}{Z_{a,b}^{(n)}(E)} \right| = \left| \frac{n Z_{a,b}^{(n-1)}(E|_{C^{n-1}})}{Z_{a,b}^{(n)}(E)} \right| \leq \frac{n}{a} .
    \end{equation}
    Hence, the claim follows from  Lemma \ref{lem:finsler}.
\end{proof}

\begin{remark} \label{rem:geodesic}
    Via a similar computation as in \eqref{eq:partial}, one gets
    \begin{equation}
        \frac{\partial}{\partial a} Z_{a,b}^{(n)}(E) = - i n Z_{a,b}^{(n-1)}( E|_{C^{n-1}} ) .
    \end{equation}
    Now let $(b(t),\,a(t))$ for $t \in [0, 1]$ be a smooth path in the upper half-plane such that starting at $(b, a)$ and ending at $(b+ \Delta b , a+ \Delta a)$ for $\Delta a, \Delta b \geq 0$. Then
\[
\frac{d}{dt}Z_{a(t),\,b(t)}^{(n)}(E)
=
-n
Z_{a(t),\,b(t)}^{(n-1)}(E|_{C^{n-1}})
\left(
\frac{db(t)}{dt}
+
i\frac{da(t)}{dt}
\right).
\]
Hence by Lemma~\ref{lem:restriction} we get
\[
\left|
\frac{\frac{d}{dt}Z_{a(t),\,b(t)}^{(n)}(E)}
{Z_{a(t),\,b(t)}^{(n)}(E)}
\right|
\leq
\frac{n}{a(t)}
\sqrt{
\left(\frac{db(t)}{dt}\right)^2
+
\left(\frac{da(t)}{dt}\right)^2
}\,.
\]
That is to say, the Bridgeland metric is bounded by some multiple of the hyperbolic metric of Poincar\'{e} half-plane model.
If we take the path to be the geodesic (that is, the semicircle connecting  $(b, a)$ and $(b+ \Delta b , a+ \Delta a)$),
then Lemma \ref{lem:finsler} implies
    \begin{equation}
        d( \sigma_{a,b}, \sigma_{a+\Delta a, b+ \Delta b} ) \leq \frac{n}{\uppi} \arcosh \left( 1+ \frac{(\Delta b)^2+(\Delta a)^2}{2a (a+ \Delta a)} \right)
    \end{equation}
where $\arcosh (x) = \log(x + \sqrt{x^2-1})$ for $x \geq 1$.
    In particular, if $\Delta a =0 $, we have
    \begin{equation}
        d( \sigma_{a,b}, \sigma_{a,b+ \Delta b} ) \leq \frac{n}{\uppi} \arcosh \left( 1+ \frac{(\Delta b)^2}{2a^2} \right) 
    \end{equation}
    which slightly improves the bound in Theorem \ref{thm:distance}.

\end{remark}

\subsection{Geometric stability conditions on $\PP^n$ and its subvarieties}
The next step in Li's construction is to show that stability conditions on $C^n$, where $C$ is an elliptic curve as before, induce stability conditions on $\PP^n$. We begin with a general definition. 
\begin{definition}
Let $f \colon Y \to X$ be a finite morphism between smooth projective varieties. Given a pre-stability condition $\sigma_Y=(\cP,Z)$
on $\Db(Y)$, we define its \emph{pushforward} to $\Db(X)$ by
\[
f_\sharp\sigma_Y
\coloneqq
(f_\sharp\cP,f_\sharp Z),
\]
where
\[
f_\sharp\cP(\phi)
\coloneqq
\{
E\in\Db(X)
\mid
f^*E\in\cP(\phi)
\},
\]
for every $\phi\in\RR$, and
\[
f_\sharp Z
\coloneqq
Z\circ f^*
\colon
\K_0(X)
\xrightarrow{\,f^*\,}
\K_0(Y)
\longrightarrow
\CC.
\]
Conversely, given a pre-stability condition $\sigma_X=(\cP,Z)$ on $\Db(X)$, we define its \emph{pullback} to $\Db(Y)$ by
\[
f^\sharp\sigma_X
\coloneqq
(f^\sharp\cP,f^\sharp Z),
\]
where
\[
f^\sharp\cP(\phi)
\coloneqq
\{
E\in\Db(Y)
\mid
f_*E\in\cP(\phi)
\},
\]
for every $\phi\in\RR$, and
\[
f^\sharp Z
\coloneqq
Z\circ f_*
\colon
\K_0(Y)
\xrightarrow{\,f_*\,}
\K_0(X)
\longrightarrow
\CC.
\]
\end{definition}

\medskip

Recall that the group $
(\ZZ/2\ZZ)^n \rtimes \fS_n$
acts on $C^n$ via factorwise involutions and permutations. Let
\[
\pi \colon C^n \to \PP^n \simeq C^n /\bigl((\ZZ/2\ZZ)^n \rtimes \fS_n\bigr)
\]
be the quotient map.
\begin{theorem}[\cite{Li26}]\label{thm:stab P^n}
For any $(\ZZ/2\ZZ)^n \rtimes \fS_n$-invariant stability condition $\sigma=(\cP,Z)$ on $C^n$, the pushforward $\pi_\sharp\sigma$ defines a stability condition on $\PP^n$ satisfying
\begin{equation}\label{ineq:P^n}
    \phi^{\pm}_{\pi_\sharp\sigma}(E)
    \leq
    \phi^{\pm}_{\pi_\sharp\sigma}(E\otimes\cO_{\PP^n}(1))
\end{equation}
for every nonzero object $E\in\Db(\PP^n)$.
\end{theorem}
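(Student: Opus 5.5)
The plan has two parts: first show that $\pi_\sharp\sigma$ is a stability condition, then prove the phase inequality \eqref{ineq:P^n}. Write $G=(\ZZ/2\ZZ)^n\rtimes\fS_n$. The main tool is that $\pi$ is a finite flat Galois-type cover: $\PP^n$ is smooth and $\pi$ is finite, so it is flat. Moreover $(\pi_*\cO_{C^n})^G=\cO_{\PP^n}$.

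\emph{Step 1: $\pi^*$ is fully faithful into the equivariant category.} Let $\pi_*^G$ denote $G$-invariant pushforward. By the projection formula,
\[
\Hom_{G}(\pi^*E,\pi^*F)=\Hom\bigl(E,\pi^G_*\pi^*F\bigr)=\Hom\bigl(E,F\otimes(\pi_*\cO_{C^n})^G\bigr)=\Hom(E,F).
\]
Hence $\pi^*\colon\Db(\PP^n)\to\Db_G(C^n)$ is fully faithful, and $\pi_*^G\pi^*\simeq\mathrm{id}$.

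\emph{Step 2: HN filtrations of pulled-back objects descend.} Let $E\in\Db(\PP^n)$. The HN filtration of $\pi^*E$ is unique, and $\sigma$ is $G$-invariant. So every $g\in G$ preserves the filtration, and its pieces and factors acquire canonical $G$-linearisations.

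Applying the exact functor $\pi^G_*$ gives a filtration of $E$ with factors $F_j\coloneqq\pi^G_*(E^j)$. I then need to show that $\pi^*F_j$ is semistable of the same phase, i.e.\ $\pi^*F_j\simeq E^j$ as equivariant objects.

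\emph{This is the step I expect to be the main obstacle.} For a general equivariant object $M$, the counit $\pi^*\pi^G_*M\to M$ need not be an isomorphism: stabilisers can act nontrivially on fibres. My first approach is the following.
\begin{itemize}
\item Verify that the essential image of $\pi^*$ is closed under cones and direct summands.
\item Show that the counit is an isomorphism on HN pieces by induction on the length of the filtration, using the Hom-vanishing between HN factors of distinct phases.
\item Run Polishchuk's argument: $G$-invariant stability conditions on $\Db(C^n)$ induce stability conditions on $\Db_G(C^n)$. The image of $\pi^*$ is then an admissible, HN-closed subcategory, because its complement is generated by objects carrying nontrivial characters of the stabilisers, and these are orthogonal to it.
\end{itemize}
Once this is established, the following properties all transfer directly from $\sigma$ via $\pi^*$: the slicing axioms for $\pi_\sharp\cP$, the compatibility with $\pi_\sharp Z=Z\circ\pi^*$, and the support property (using the finite map $\K(\PP^n)\to\Lambda_n$).

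\emph{Step 3: the phase inequality.} Put $L\coloneqq\pi^*\cO_{\PP^n}(1)$. This is an effective line bundle, with a $G$-invariant section $s$ cutting out $D=\pi^{-1}(\text{hyperplane})$. For $E\in\Db(\PP^n)$ the section gives a triangle
\[
\pi^*E\to\pi^*E\otimes L\to\pi^*E|_D.
\]

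For $\phi^-$, I would take the minimal HN factor $Q$ of $E\otimes\cO(1)$ and the nonzero composite $E\to E\otimes\cO(1)\to Q$. If this composite vanishes, the map factors through $E|_{\text{hyp}}[-1]$. So the task is to show that $\phi^-$ of the torsion object $E|_{\text{hyp}}$ is at least $\phi^-(E)$ minus something that the shift $[-1]$ absorbs.

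For torsion objects I would use the restriction control of Lemma~\ref{lem:restriction}, or more precisely Liu's construction, which is built inductively from restriction to $C^{n-1}$. This says that restriction to a divisor does not lower phases by more than $1$. The cleaner alternative is Li's observation that $\otimes L$ acts on the family by $b\mapsto b-2$ up to the $G$-symmetrisation. Then \eqref{ineq:P^n} becomes a monotonicity of $\phi^\pm_{a,b}$ in $b$, which I would verify through the defining tilts in Liu's construction.
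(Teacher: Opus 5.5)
There are two genuine gaps, one in each half of your proposal.

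\textbf{Existence.} The paper does not prove this part by hand. It takes the fact that $\pi_\sharp\sigma$ is a stability condition from the proof of \cite[Theorem~6.3]{Li26}. You try to do it directly, and your argument for the crucial point does not work. That point is that the $G$-equivariant HN factors $E^j$ of $\pi^*E$ lie in the essential image of $\pi^*$, i.e.\ that $\pi^*\pi^G_*E^j\to E^j$ is an isomorphism. Admissibility of $\pi^*\Db(\PP^n)$ in $\Db_G(C^n)$, and orthogonality to objects on which stabilisers act through nontrivial characters, are Hom-vanishing statements. They do not make the subcategory closed under taking HN factors; stability conditions do not in general restrict to semiorthogonal components. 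Likewise, Hom-vanishing between factors of distinct phases says nothing about how stabilisers act on the fibres of each factor. Either cite Li here, as the paper does, or supply an actual descent argument.

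\textbf{The inequality.} The paper only observes two things: $\pi^*$ identifies $\pi_\sharp\sigma$-HN filtrations with $\sigma$-HN filtrations, and $\pi^*\cO_{\PP^n}(1)\cong\cO_{C^n}(2\widetilde H)$. So \eqref{ineq:P^n} is equivalent to \eqref{ineq:C^n}, which is then quoted from \cite[Lemma~2.3]{Li26} for the given $\sigma$.

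Your approach (a) is circular. If $\phi(Q)<\phi^-(E)$, the map $E(1)\to Q$ factors through the cone of $s$, which is $E(1)|_{\mathrm{hyp}}$, not $E|_{\mathrm{hyp}}[-1]$. So you would need $\phi^-(E(1)|_{\mathrm{hyp}})\ge\phi^-(E)$. By the same triangle, this is \emph{equivalent} to $\phi^-(E(1))\ge\phi^-(E)$. Moreover, Lemma~\ref{lem:restriction} bounds central charges of semistable objects, not phases of restrictions.

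Your approach (b) can be made rigorous, but only for Liu's family:
\begin{itemize}
\item The identity $\sigma^{(n)}_{a,b}\otimes\cO(2\widetilde H)=\sigma^{(n)}_{a,b+2}$ needs \cite[Theorem~1.1]{LMPSZ}. It gives $\phi^\pm_{a,b}(F(2\widetilde H))=\phi^\pm_{a,b-2}(F)$.
\item Monotonicity in $b$ does not come from the tilts. It comes from \eqref{eq:partial} together with the identity $\Im\bigl(\overline{Z^{(n)}(E)}\,Z^{(n-1)}(E|_{C^{n-1}})\bigr)=B(E)C(E)-A(E)D(E)+a|Z^{(n-1)}(E|_{C^{n-1}})|^2\ge 0$ from the proof of Lemma~\ref{lem:restriction}.
\item This yields $\partial_b\phi_{a,b}(E)\le 0$ for semistable $E$. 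The piecewise argument of Lemma~\ref{lem:finsler} then gives monotonicity of $\phi^\pm_{a,b}$.
\end{itemize}
That would cover the only case the paper actually uses, namely $\sigma^{(n)}_{2a,2b}$. But the theorem is stated for an arbitrary $G$-invariant $\sigma$, where there is no parameter $b$ and no Lemma~\ref{lem:restriction}. There you need an inequality valid for $\sigma$ itself, which is exactly what \cite[Lemma~2.3]{Li26} supplies in the paper's proof.
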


\begin{proof}
 By the proof of \cite[Theorem~6.3]{Li26}, we know that $\pi_\sharp\sigma$ defines a stability condition on $\PP^n$. It therefore suffices to show \eqref{ineq:P^n}, which is equivalent to
\begin{equation}\label{ineq:C^n}
    \phi_{\sigma}^{\pm}(\pi^*E)
    \leq
    \phi_{\sigma}^{\pm}(\pi^*E \otimes \cO_{C^n}(2\widetilde H)),
\end{equation}
for every nonzero object $E\in\Db(\PP^n)$, where $\widetilde H=H_1+\cdots+H_n$. The inequality \eqref{ineq:C^n} follows from \cite[Lemma~2.3]{Li26}.
\end{proof}

As s result of Theorem \ref{thm:stab P^n}, we have a two-parameter family of stability conditions on $\PP^n$ as 
$$
\sigma_{a,b}^{\PP^n} \coloneqq \pi_\sharp  \sigma^{(n)}_{2a,\, 2b}
$$
for $(a,b)\in \Q_{>0} \times \Q$ where the central charge is given by 
\begin{equation}
    Z^{\PP^n}_{a, b}(-)=-\int_{\PP^n} e^{-(b+i a) H_{\PP^n}}  \ch (-) 
\end{equation}
for $H_{\PP^n}=c_1(\cO_{\PP^n}(1))$. 

As an application of Theorem~\ref{thm:distance}, we obtain the following corollary, which will play a key role in the construction of stability conditions on subvarieties. For an object $E \in \Db(\PP^n)$, we denote by $\phi_{a,b}^{+}(E)$ and $\phi_{a,b}^{-}(E)$ its maximal and minimal phases, respectively, with respect to the stability condition $\sigma_{a,b}^{\PP^n}$.

\begin{corollary}\label{cor:large-volume}
For any $m \in \ZZ_{>0}$ and any $a > \frac{mn}{\uppi}$, every nonzero object $E \in \Db(\PP^n)$ satisfies
\begin{equation}\label{eq:Pn-phase}
    \phi_{a,b}^{-}(E \otimes \cO_{\PP^n}(m))
    <
    \phi_{a,b}^{-}(E)+1.
\end{equation}
\end{corollary}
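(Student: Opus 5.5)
The idea is to compare tensoring by $\cO_{\PP^n}(m)$ with a shift of the parameter $b$ by $m$. Then Theorem~\ref{thm:distance} bounds the resulting phase change, after transporting everything to $C^n$.

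\textbf{Step 1: tensoring is a shift in $b$.} Tensoring by the line bundle $L=\cO_{\PP^n}(m)$ is an autoequivalence. It sends $\sigma^{\PP^n}_{a,b}$ to a stability condition $L\otimes\sigma^{\PP^n}_{a,b}$ with central charge $Z(L^{-1}\otimes -)$. Since $\ch(L^{-1}\otimes E)=e^{-mH}\ch(E)$, this central charge is $Z^{\PP^n}_{a,b+m}$.

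We want $L\otimes\sigma^{\PP^n}_{a,b}=\sigma^{\PP^n}_{a,b+m}$ as stability conditions. Work on $C^n$: since $\pi^*\cO_{\PP^n}(1)=\cO_{C^n}(2\widetilde H)$, we have $\pi^*L=\cO(2m\widetilde H)$. The family $\sigma^{(n)}_{a,b}$ is expected to satisfy
\[
\cO(k\widetilde H)\otimes\sigma^{(n)}_{a,b}=\sigma^{(n)}_{a,b+k}.
\]
This follows from Liu's construction, which is equivariant under the line bundles $\cO_C(1)^{\boxtimes n}$. Alternatively, both sides lie in the continuous family with the same central charge and agree as $k$ varies continuously; this uses the deformation uniqueness for connected families with identical central charges.

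Taking $k=2m$ and pushing forward via $\pi_\sharp$ gives the claimed identity. Unwinding it,
\[
\phi^{-}_{a,b}(E\otimes\cO(m))=\phi^{-}_{a,b-m}(E).
\]
I expect this identification to be the main subtle point. If it is not available from the cited constructions, I would instead argue uniqueness: the two stability conditions have equal central charges, and one is connected to the other by a path along which the central charges agree with those of the family.

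\textbf{Step 2: distance bound.} The $b$-shift on $\PP^n$ corresponds to the pair $\sigma^{(n)}_{2a,2b}$ and $\sigma^{(n)}_{2a,2b-2m}$ on $C^n$. Theorem~\ref{thm:distance} with $\Delta b=2m$ and parameter $2a$ gives
\[
d\bigl(\sigma^{(n)}_{2a,2b-2m},\sigma^{(n)}_{2a,2b}\bigr)\le \frac{n}{\uppi}\frac{2m}{2a}=\frac{mn}{\uppi a}.
\]
Phases for $\pi_\sharp\sigma$ are computed from $\pi^*E$, since the HN filtration of $\pi^*E$ is invariant and descends. Hence
\[
|\phi^-_{a,b-m}(E)-\phi^-_{a,b}(E)|\le \frac{mn}{\uppi a}.
\]

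\textbf{Step 3: conclude.} By Step 2,
\[
\phi^-_{a,b}(E\otimes\cO(m))\le\phi^-_{a,b}(E)+\frac{mn}{\uppi a}.
\]
The hypothesis $a>\frac{mn}{\uppi}$ gives $\frac{mn}{\uppi a}<1$, which is exactly \eqref{eq:Pn-phase}.

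Here $\Delta b=2m$ and $a$ are fixed, so the supremum in Theorem~\ref{thm:distance} is a uniform bound, and strictness follows from $\frac{mn}{\uppi a}<1$.
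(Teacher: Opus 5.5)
Your proposal is correct and follows the paper's proof. You identify $\cO_{\PP^n}(m)\otimes\sigma^{\PP^n}_{a,b}$ with $\sigma^{\PP^n}_{a,b+m}$ by lifting to $\sigma^{(n)}_{2a,2b}$ on $C^n$, and you bound the phase change by Theorem~\ref{thm:distance} with $\Delta b=2m$ and parameter $2a$, giving $\frac{mn}{\uppi a}<1$. The one difference is in the key identification: the paper settles it by citing the uniqueness result \cite[Theorem~1.1]{LMPSZ} on $C^n$ together with \cite[Lemma~3.2]{Li26} for compatibility of $\pi_\sharp$ with twisting, whereas your ``let $k$ vary continuously'' fallback does not literally make sense, since tensoring by $\cO(k\widetilde H)$ is only defined for integer $k$.
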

\begin{proof}
    By construction, an object $E \in \Db(\PP^n)$ is $\sigma^{\PP^n}_{a,b}$-semistable of phase $\phi$ if and only if its pullback $\pi^*E$ is $\sigma^{(n)}_{2a,2b}$-semistable of phase $\phi$. Therefore,
    \begin{equation}
        d(\sigma^{\PP^n}_{a,b}, \sigma^{\PP^n}_{a,b+\Delta b})
        \leq
        d(\sigma^{(n)}_{2a,2b}, \sigma^{(n)}_{2a,2b+2\Delta b})
        \leq
        \frac{n}{\uppi} \frac{\Delta b}{a},
    \end{equation}
    where the last inequality follows from Theorem~\ref{thm:distance}. Taking $\Delta b=m$ and using $a> \frac{mn}{\uppi}$, we obtain
    \begin{equation}\label{eq}
        d(\sigma^{\PP^n}_{a,b}, \sigma^{\PP^n}_{a,b+m}) <1.
    \end{equation}

On the other hand, we have
\[
\sigma^{(n)}_{2a,2b}\otimes \cO_{C^n}(2m\widetilde H)
=
\sigma^{(n)}_{2a,2(b+m)}.
\]
Indeed, the equality of the corresponding central charges follows from a direct computation, while the equality of the stability conditions then follows from \cite[Theorem~1.1]{LMPSZ}.
   Applying $\pi_\sharp$ to both sides and using \cite[Lemma~3.2]{Li26}, we deduce that
    \[
    \sigma^{\PP^n}_{a,b}\otimes \cO_{\PP^n}(m)
    =
    \sigma^{\PP^n}_{a,b+m}.
    \]
    Therefore, \eqref{eq} implies
    \begin{equation}
        |\phi^-_{a,b}(E\otimes \cO_{\PP^n}(-m))
        -
        \phi^-_{a,b}(E)|
        =
        |\phi^-_{a,b+m}(E)-\phi^-_{a,b}(E)|
        <1.
    \end{equation}
    This completes the proof.
\end{proof}




\begin{definition}\label{def:m}
Let
\[
\iota\colon X\hookrightarrow\PP^n
\]
be a closed subvariety. Consider an exact sequence
\begin{equation}\label{eq:resolution}
    0
    \to
    F
    \to
    \bigoplus_{j=1}^{r_{n-1}}
    \cO_{\PP^n}(-d_{n-1,j})
    \to
    \cdots
    \to
    \bigoplus_{j=1}^{r_1}
    \cO_{\PP^n}(-d_{1,j})
    \to
    \cO_{\PP^n}
    \to
    \iota_*\cO_X
    \to
    0,
\end{equation}
where $F\in\Coh(\PP^n)$ and all the intermediate terms are split vector bundles. We define
\begin{equation}\label{eq:def m}
m(X\xhookrightarrow{\iota}\PP^n)
\coloneqq
\min
\left\{
\max_{\substack{1\le i\le n-1\\1\le j\le r_i}}
\frac{d_{i,j}}{i}
\right\},
\end{equation}
where the minimum is taken over all exact sequences of the form \eqref{eq:resolution}.
\end{definition}


\begin{theorem} \label{thm:effective restriction}
   For any $a, b \in \QQ$ with $a\geq \frac{1}{\uppi} \, n \,  m(X \xhookrightarrow{\iota} \PP^n)$, the pullback 
    \begin{equation}
        \sigma_{a,b}^{X} \coloneqq \iota^{\sharp} \sigma_{a,b}^{\PP^n} 
    \end{equation}
    defines a stability condition on $X$.
\end{theorem}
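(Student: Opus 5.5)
The plan is to deduce the statement from Li's restriction criterion \cite[Theorem~6.5]{Li26}, with Theorem~\ref{thm:distance} supplying the phase estimate that criterion needs. As I read it, that criterion says the following. Let $\sigma$ be a stability condition on $\PP^n$ and let $\iota_*\cO_X$ admit a resolution \eqref{eq:resolution}. Suppose that for every $i$ and $j$, tensoring by $\cO_{\PP^n}(-d_{i,j})$ lowers the minimal phase of every object by strictly less than $i$. Then $\iota^\sharp\sigma$ is a stability condition on $X$.

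The intuition for why this suffices is as follows. One has to show that for $E\in\Db(X)$ the HN filtration of $\iota_*E$ has factors of the form $\iota_*(\cdot)$. Equivalently, the HN truncations of $\iota_*E$ must be $\cO_X$-modules. This is tested by the vanishing of maps out of $\iota_*E\otimes\cO_{\PP^n}(-d_{i,j})[i-1]$ into the lower part of the filtration. These terms come from tensoring $\iota_*E$ with the resolution, and the vanishing follows from the phase bound. I will take this criterion as given, in the form stated in \cite{Li26}, and only verify its hypothesis. Support property and the central charge formula are inherited by the pullback, as in \cite{Li26}.

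First, fix a resolution \eqref{eq:resolution} attaining the minimum $m\coloneqq m(X\xhookrightarrow{\iota}\PP^n)$, so that $d_{i,j}\le i\,m$ for all $i,j$. Corollary~\ref{cor:large-volume} only handles integer twists and a single step. I will therefore go back to the metric bound it is built from. Arguing exactly as in the proof of Corollary~\ref{cor:large-volume}, Theorem~\ref{thm:distance} together with $\sigma^{\PP^n}_{a,b}\otimes\cO_{\PP^n}(d)=\sigma^{\PP^n}_{a,b+d}$ gives, for every $d\ge 0$,
\[
\bigl|\phi^-_{a,b}(E\otimes\cO_{\PP^n}(-d))-\phi^-_{a,b}(E)\bigr|
\le d(\sigma^{\PP^n}_{a,b},\sigma^{\PP^n}_{a,b+d})
\le \frac{n}{\uppi}\frac{d}{a}.
\]
The same bound holds for $\phi^+$. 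Applying it with $d=d_{i,j}\le im$ and $a\ge \frac{nm}{\uppi}$, the phase shift is at most $\frac{n\,i\,m}{\uppi a}\le i$. This is the hypothesis of Li's criterion, except for strictness.

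The main obstacle is exactly the strict inequality, since the statement allows $a=\frac{nm}{\uppi}$. Here I would use rationality. The invariant $m$ is a minimum of finitely many rational numbers $d_{i,j}/i$ over resolutions, so $m\in\QQ$, and $a\in\QQ$ by hypothesis. If $m>0$, then $\frac{nm}{\uppi}$ is irrational, so $a\ne \frac{nm}{\uppi}$ and hence $a>\frac{nm}{\uppi}$. The bound then becomes $\frac{n\,i\,m}{\uppi a}<i$. If $m\le 0$, then all $d_{i,j}\le 0$, and the shifts are trivially controlled via \eqref{ineq:P^n}, which shows that nonnegative twists do not lower phases. If the criterion in \cite{Li26} is phrased with a non-strict bound, this step is unnecessary. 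Otherwise the irrationality of $\uppi$ is what removes the boundary case. With the strict bound in hand, \cite[Theorem~6.5]{Li26} applies and $\iota^\sharp\sigma^{\PP^n}_{a,b}$ is a stability condition on $X$.
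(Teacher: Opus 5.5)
Your argument is correct in substance and follows the same route as the paper: Li's restriction mechanism, fed by the metric bound of Theorem~\ref{thm:distance}. The difference is that you take the restriction criterion on faith, and that criterion is exactly the step the paper writes out. Your formulation of it also leaves out one input.

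The paper does not treat \cite[Theorem~6.5]{Li26} as a black box. It invokes \cite[Corollary~2.2.2]{Pol07}, which reduces the claim to $\phi^-_{a,b}(E\otimes\iota_*\cO_X)\geq\phi^-_{a,b}(E)$ for all $E\in\Db(\PP^n)$. It then tensors the resolution with $E$, which filters $E\otimes\iota_*\cO_X$ with factors $E$, the objects $E\otimes\cO_{\PP^n}(-d_{i,j})[i]$, and $E\otimes F[n]$. Your phase estimate controls only the middle factors.

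The last factor $E\otimes F[n]$ is not governed by any condition on the $d_{i,j}$. The paper handles it using that skyscraper sheaves are $\sigma^{\PP^n}_{a,b}$-stable of phase $1$. By \cite[Lemma~10.1]{Bri08} this gives $\Coh(\PP^n)\subset\cP^{\PP^n}_{a,b}(1-n,1]$, which places $E\otimes F[n]$ in $\cP^{\PP^n}_{a,b}(\lceil\phi^-_{a,b}(E)\rceil,+\infty)$. So the criterion as you state it, for an arbitrary stability condition on $\PP^n$, says nothing about this tail; it needs a geometricity hypothesis. Since $\sigma^{\PP^n}_{a,b}$ is geometric, your conclusion stands, but this part of the argument is missing from your proposal.

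Two of your deviations are worth noting.

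\emph{Strictness.} The inequality required by Polishchuk's criterion is non-strict. Your bound $\frac{n\,d_{i,j}}{\uppi a}\leq i$ therefore already gives $\phi^-_{a,b}(E\otimes\cO_{\PP^n}(-d_{i,j})[i])\geq\phi^-_{a,b}(E)$, and the irrationality of $\uppi$ is not needed. That observation does, however, correctly justify the paper's appeal to Corollary~\ref{cor:large-volume}, which is stated for strict $a>\frac{mn}{\uppi}$, at the boundary case.

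\emph{Integrality.} Applying the metric bound directly to each integer $d_{i,j}$ is cleaner than the paper's route. The paper first compares $\cO_{\PP^n}(-d_{i,j})$ with $\cO_{\PP^n}(-im)$ via Theorem~\ref{thm:stab P^n}, then iterates Corollary~\ref{cor:large-volume}. That tacitly requires $m\in\ZZ$, whereas $m(X\xhookrightarrow{\iota}\PP^n)$ is in general only rational. Your version avoids this.
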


\begin{proof}
    This follows from Theorem~\ref{thm:distance} and the proof of \cite[Theorem~6.5]{Li26}; we include a proof here for completeness.

By \cite[Corollary~2.2.2]{Pol07}, to restrict a stability condition to $X$, it suffices to prove that
\begin{equation}\label{goal}
    \phi_{a,b}^-(E\otimes \iota_*\cO_X)
    \geq
    \phi_{a,b}^-(E)
\end{equation}
for all $E\in \Db(\PP^n)$.

Consider a resolution of $\iota_*\cO_X$ as in \eqref{eq:resolution} realizing the minimum $m(X \xhookrightarrow{\iota} \PP^n)$. Tensoring this resolution with $E$, we see that $E\otimes \iota_*\cO_X$ admits a filtration in $\Db(\PP^n)$ whose factors are $E$, the objects
\[
E\otimes \cO_{\PP^n}(-d_{i,j})[i],
\qquad
1\leq i\leq n-1,\quad 1\leq j\leq r_i,
\]
and $E\otimes F[n]$. Hence
\begin{equation}\label{bound by filtration}
    \phi_{a,b}^-(E\otimes \iota_*\cO_X)
    \geq
    \min\!\left\{
    \phi_{a,b}^-(E),\,
    \phi_{a,b}^-\bigl(E\otimes \cO_{\PP^n}(-d_{i,j})[i]\bigr),\,
    \phi_{a,b}^-(E\otimes F[n])
    \right\}_{\substack{1\leq i\leq n-1\\1\leq j\leq r_i}}.
\end{equation}

Let $m = m(X \xhookrightarrow{\iota} \PP^n)$. Then, by Theorem~\ref{thm:stab P^n},
\begin{equation}\label{in.1}
    \phi_{a,b}^-(E\otimes \cO_{\PP^n}(-d_{i,j})[i])
    \geq
    \phi_{a,b}^-(E\otimes \cO_{\PP^n}(-im)[i]).
\end{equation}
Since $a\geq \frac{mn}{\uppi}$, applying Corollary~\ref{cor:large-volume} $i$ times yields
\begin{equation}\label{in.1.5}
    \phi_{a,b}^-(E\otimes \cO_{\PP^n}(-im)[i])
    \geq
    \phi_{a,b}^-(E).
\end{equation}
Moreover, since all skyscraper sheaves belong to $\cP_{a,b}^{\PP^n}(1)$, \cite[Lemma~10.1]{Bri08} implies that
\begin{equation}
    E \in \Coh(\PP^n)[\lceil \phi_{a,b}^-(E)\rceil-1,\,+\infty)
    \qquad\text{and}\qquad
    \Coh(\PP^n)\subset \cP^{\PP^n}_{a,b}(1-n,1].
\end{equation}
Hence
\begin{equation}\label{ineq:2}
    E\otimes F[n]
    \in
    \Coh(\PP^n)[\lceil \phi_{a,b}^-(E)\rceil-1+n,\,+\infty)
    \subset
    \cP^{\PP^n}_{a,b}(\lceil \phi_{a,b}^-(E)\rceil,\,+\infty).
\end{equation}
Combining \eqref{in.1}, \eqref{in.1.5}, and \eqref{ineq:2}, the inequality \eqref{bound by filtration} implies \eqref{goal}, completing the proof.
\end{proof}

Note that the lower bound for $a$ in Theorem~\ref{thm:effective restriction} can be slightly improved to
\[
a \geq \frac{m(X \xhookrightarrow{\iota} \PP^n)}{2\pi \sinh(\frac{1}{2n})}
\]
by Remark~\ref{rem:geodesic}.

\begin{remark}\label{rem:m}
    We conclude with a few remarks concerning the invariant $m(X \xhookrightarrow{\iota} \PP^n)$.
    \begin{enumerate}[label=(\roman*)]

        \item In many examples, such as complete intersections, the maximum
        \begin{equation}
            \max_{\substack{1\leq i\leq n-1\\ 1\leq j\leq r_i}}
            \left\{
            \frac{d_{i,j}}{i}
            \right\}
        \end{equation}
        is attained for $i=1$. It would be interesting to know whether this holds in general. 
        
        \item It is easy to check that
        \[
        m(X \xhookrightarrow{\iota} \PP^n)
        \leq
        \reg(\cI_{X/\PP^n}),
        \]
        where $\reg(-)$ denotes Castelnuovo--Mumford regularity.
        
        \item It is also straightforward to verify that for any hyperplane section $Y \subset X$, one has
        \[
        m(Y \xhookrightarrow{\iota|_Y} \PP^{n-1})
        \leq
        m(X \xhookrightarrow{\iota} \PP^n).
        \]
    \end{enumerate}
\end{remark}

\bigskip

\section{Dimensional reduction}

Let $(X,H)$ be a polarized smooth projective variety with $H$ very ample. We define
\begin{equation}
    a(X,H)
    \coloneqq
    \min \left\{
    \frac{1}{2} n \, m(X \xhookrightarrow{\iota} \PP^n)
    :
    \text{$\iota$ is a closed immersion such that $H=c_1(\iota^*\cO_{\PP^n}(1))$}
    \right\}.
\end{equation}
We fix a closed immersion $\iota \colon X \hookrightarrow \PP^n$ realizing the minimum $a(X,H)$, and $a,b \in \Q$ satisfying
\begin{equation}\label{condition.eq}
a \geq a(X,H).    
\end{equation}
As described in Section~\ref{section 1}, there is an induced stability condition
\[
\sigma_{a,b}^X
=
(\cP_{a,b}^X,Z_{a,b}^X),
\]
whose central charge is given (up to a scalar multiple) by
\[
Z_{a,b}^X(E)
=
-\int_X
e^{-(b+ia)H}
\td(N_{X/\PP^n})^{-1}
\ch(E)
.
\]
The slicing is defined by
\[
\cP_{a,b}^X(\phi)
=
\{
E\in \Db(X)
:
\iota_*E\in \cP_{a,b}^{\PP^n}(\phi)
\}.
\]
We denote the heart of $\sigma_{a,b}^X$ by $\cA_{a,b}^X$. In this section, we give an explicit description of this heart when $X$ is a surface or a threefold.

\medskip

To simplify the notation, we write
\begin{equation}
    \smash{\widetilde\ch}^{bH} = e^{-bH}  \td(N_{X/\PP^n})^{-1} \ch .
\end{equation}
In particular, we have the following explicit expression:
\begin{align}
\smash{\widetilde\ch}^{bH}_0 &= \ch_0 = \rk \\
\smash{\widetilde\ch}^{bH}_1 &= \ch^{B_{\!X}}_1 \\
\smash{\widetilde\ch}^{bH}_2 &= \ch^{B_{\!X}}_2 + \frac{1}{12}\ch_2(N_{X/\PP^n}) \ch_0 \\
\smash{\widetilde\ch}^{bH}_3 &= \ch^{B_{\!X}}_3 + \frac{1}{12}\ch_2(N_{X/\PP^n}) \ch^{B_{\!X}}_1
\end{align}
where $\ch^{B_{\!X}} = e^{-B_{\!X}}  \ch$ and
\begin{equation}\label{BX}
    B_{\!X} \coloneqq  bH + \frac{c_1(N_{X/\PP^n})}{2}.
\end{equation}
To study the heart $\cA^X_{a,b}$, we consider the following slope function on coherent sheaves:
\[
\mu_{B_{\!X}}(E)
\coloneqq
\frac{H^{\dim X-1} \ch_1^{B_{\!X}}(E)}
     {H^{\dim X}\ch_0(E)}
=
\frac{H^{\dim X-1} \ch_1(E)}
     {H^{\dim X}\ch_0(E)}
-b
-\frac{H^{\dim X-1}c_1(N_{X/\PP^n})}
       {2H^{\dim X}}
\]
whenever $\ch_0(E)\neq 0$, and $\mu_{B_{\!X}}(E)=+\infty$ otherwise. 
The tilted heart $\Coh^{B_{\!X}}(X)$ is the extension-closure 
\begin{equation}\label{tilted heart}
\Coh^{B_{\!X}}(X) = \langle \mathcal{T}_{B_{\!X}}, \mathcal{F}_{B_{\!X}}[1] \rangle    
\end{equation}
where $\mathcal{T}_{B_{\!X}}$ (resp. $\mathcal{F}_{B_{\!X}}$) consists of sheaves $E \in \Coh(X)$ with $\mu^-_{B_{\!X}}(E) >0$ (resp. $\mu^+_{B_{\!X}}(E) \leq 0$)\footnote{This tilted heart is denoted by $\Coh(X)^{\mathcal{Z}^{(1)}_b}$ in the Introduction, where
\[
\mathcal{Z}^{(1)}_b
=
H^{\dim X-1}\bigl(-\smash{\widetilde\ch}^{\,bH}_1
+iH\,\smash{\widetilde\ch}^{\,bH}_0\bigr).
\] }. 

\medskip

Let $Y \in |H|$ be a smooth hyperplane section, and denote its embedding by 
$$\jmath\colon Y \hookrightarrow X.$$ 
Let $H_{Y} \coloneqq \jmath^* H$. Note that by Remark~\ref{rem:m} we automatically have $a \geq \frac{n}{\uppi} \, m(Y,H_{Y})$,
so we have the induced stability condition
$\sigma^Y_{a, b} = (\cA_{a,b}^Y , Z_{a,b}^Y)$ on $Y$ as well. 

For simplicity, we write $\phi_{a,b}^{\pm}(E)$ for the maximal and minimal phases of an object $E$ with respect to the relevant stability condition; depending on whether $E$ belongs to $\Db(Y)$, $\Db(X)$, or $\Db(\PP^n)$, these phases are computed with respect to $\sigma_{a,b}^Y$, $\sigma_{a,b}^X$, or $\sigma_{a,b}^{\PP^n}$, respectively. 



\begin{lemma} \label{restriction}
  For any object $E \in \Db(X)$, we have
\begin{equation}
    \phi^-_{a,b}(E)
    \leq
    \phi^-_{a,b}(E|_Y)
    \qquad\text{and}\qquad
    \phi^+_{a,b}(E(H)|_Y)
    \leq
    \phi^+_{a,b}(E)+1.
\end{equation}
\end{lemma}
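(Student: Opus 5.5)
Both inequalities come from pushing forward to $\PP^n$, where $\Db(X)$ and $\Db(Y)$ inherit their slicings by definition. The key input is the exact triangle on $X$
\[
E(-H) \to E \to \jmath_*(E|_Y),
\]
obtained by tensoring $0\to\cO_X(-H)\to\cO_X\to\jmath_*\cO_Y\to 0$ with $E$. We also use that $Y=X\cap \PP^{n-1}$ for a hyperplane $\PP^{n-1}\subset\PP^n$, so $\iota_*\jmath_*=\iota'_*$ for the embedding $\iota'\colon Y\hookrightarrow\PP^n$. We must check that $\sigma^Y_{a,b}$, defined via $Y\subset\PP^{n-1}$, agrees with $\iota'^{\sharp}\sigma^{\PP^n}_{a,b}$; otherwise we define $\sigma^Y_{a,b}$ this way, since the paper only uses that $\sigma^Y_{a,b}$ is the restricted stability condition. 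With this convention $\phi^\pm_{a,b}(F)=\phi^\pm_{a,b}(\jmath_*F)$ for $F\in\Db(Y)$, computed on $X$. Likewise $\phi^\pm$ on $X$ equals $\phi^\pm$ of the pushforward to $\PP^n$, because $\iota_*$ takes the HN filtration for $\sigma^X$ to the HN filtration for $\sigma^{\PP^n}$.

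For the first inequality, push forward to $\PP^n$ and write $\iota_*E(-H)=\iota_*E\otimes\cO_{\PP^n}(-1)$ by the projection formula. Theorem~\ref{thm:stab P^n}, applied to $\iota_*E\otimes\cO(-1)$, gives
\[
\phi^-(\iota_*E(-H))\le\phi^-(\iota_*E)
\qquad\text{and}\qquad
\phi^+(\iota_*E(-H))\le \phi^+(\iota_*E).
\]
The triangle gives the cone $\jmath_*(E|_Y)$ of the map $E(-H)\to E$. From the triangle $E\to\jmath_*(E|_Y)\to E(-H)[1]$ we get
\[
\phi^-(\jmath_*(E|_Y))\ge\min\{\phi^-(E),\,\phi^-(E(-H))+1\}.
\]
We need $\phi^-(E(-H))+1\ge \phi^-(E)$, which follows from Corollary~\ref{cor:large-volume} with $m=1$: it gives $\phi^-(G\otimes\cO(1))<\phi^-(G)+1$ for $G=\iota_*E(-H)$, provided $a>n/\uppi$. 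This holds, since $a\ge a(X,H)=\frac n2 m$ and $m\ge 1$, though the normalisation constants need to be matched carefully. Hence $\phi^-(E|_Y)\ge\phi^-(E)$.

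For the second inequality, use the triangle for $E(H)$, namely $E\to E(H)\to\jmath_*(E(H)|_Y)\to E[1]$. This gives
\[
\phi^+(\jmath_*(E(H)|_Y))\le\max\{\phi^+(E(H)),\,\phi^+(E)+1\}.
\]
We then need $\phi^+(E(H))\le\phi^+(E)+1$, the dual of Corollary~\ref{cor:large-volume} for $\phi^+$. This follows from the same distance bound $d(\sigma_{a,b},\sigma_{a,b+1})<1$, since the metric also controls $\phi^+$ and $\sigma_{a,b}\otimes\cO(1)=\sigma_{a,b+1}$.

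The main obstacle is identifying the restricted stability condition on $Y$ with the one obtained from $\PP^{n-1}$. This requires comparing Li's stability conditions on $\PP^{n-1}$ and on $\PP^n$ along the hyperplane: for $G\in\Db(\PP^{n-1})$, $G$ should be $\sigma^{\PP^{n-1}}_{a,b}$-semistable iff its pushforward is $\sigma^{\PP^n}_{a,b}$-semistable. I would first try to deduce this from uniqueness of stability conditions with the given central charge and skyscrapers stable of phase one, using the result \cite[Theorem~1.1]{LMPSZ} already invoked in the paper. I would also check that the central charges agree up to the normalising scalar coming from $\td(N)^{-1}$.
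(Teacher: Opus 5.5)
Your proof is correct and is essentially the paper's argument. It uses the same hyperplane-section triangles, the fact that $\jmath_*$ and $\iota_*$ carry HN filtrations to HN filtrations, and the one-step phase bound for $\otimes\cO_{\PP^n}(1)$ from Corollary~\ref{cor:large-volume}; for $\phi^+$ you take this directly from $d(\sigma^{\PP^n}_{a,b},\sigma^{\PP^n}_{a,b+1})<1$, where the paper cites \cite[Lemma~2.2]{Li26}.

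Your ``main obstacle'' is moot. The paper also takes $\sigma^Y_{a,b}$ to be the restriction from $\PP^n$ through $X$, which is why it can assert the ``by construction'' inclusion $\jmath_*\cP^Y_{a,b}(\phi)\subset\cP^X_{a,b}(\phi)$ and why $N_{S/\PP^n}$ appears in \eqref{slope-rest}. Your proposed comparison with the $\PP^{n-1}$-induced condition would in fact fail: the two central charges differ by the non-scalar factor $\td(\cO_Y(H))^{-1}$, not by a normalising constant.
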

\begin{proof}
Consider the section $\cO_X \overset{s}{\hookrightarrow} \cO_X(H)
\twoheadrightarrow
\jmath_*\cO_Y(H)$. Tensoring with $E$ yields the exact triangle
\begin{equation}\label{exact.1}
    E(H)
    \to
    \jmath_*E(H)|_Y
    \to
    E[1].
\end{equation}
By construction, $\jmath_*\cP^Y_{a,b}(\phi)
\subset
\cP^X_{a,b}(\phi)$, therefore, semistable objects remain semistable under $\jmath_*$, hence the HN filtration is preserved and we get
\begin{equation}\label{eq.1}
    \phi_{a,b}^+(E(H)|_Y) =
    \phi_{a,b}^+\bigl(\jmath_*E(H)|_Y\bigr).
\end{equation}
On the other hand, \eqref{exact.1} implies that
\[
\phi_{a,b}^+\bigl(\jmath_*E(H)|_Y\bigr)
\leq
\max
\bigl\{
\phi_{a,b}^+(E(H)),
\phi_{a,b}^+(E[1])
\bigr\} 
\leq
\phi_{a,b}^+(E)+1,
\]
where the last inequality follows from Corollary~\ref{cor:large-volume} and \cite[Lemma~2.2]{Li26}. Combining this with \eqref{eq.1} proves the second inequality. A similar argument shows that
\[
\phi_{a,b}^-(E|_Y)
=
\phi_{a,b}^-\bigl(\jmath_*E|_Y\bigr)
\geq
\min
\bigl\{
\phi_{a,b}^-(E),
\phi_{a,b}^-(E(-H)[1])
\bigr\} \geq
\phi_{a,b}^-(E).
\]
as claimed in the first inequality. 
\end{proof}

\subsection{Surfaces} When $X$ is a smooth surface $S$, the corresponding heart admits a particularly simple description via Bridgeland's classification of geometric stability conditions on surfaces.
We know
\begin{align}
    Z_{a, b}^S(E) 
    =-\ch_2^{B_{\!S}}(E) + \left( \frac{a^2 H_{\!S}^2}{2} - \frac{\ch_2(N_{S/\PP^n})}{12}   \right) \ch_0(E) \ + i {a} H_{\!S}  \ch_1^{B_{\!S}}(E) .
\end{align}

\begin{lemma}\label{induced heart on surface}
    We have $\cA_{a, b}^S = \Coh^{B_{\!S}}(S)$ where $\Coh^{B_{\!S}}(S)$ is the tilted heart defined in \eqref{tilted heart}. 
\end{lemma}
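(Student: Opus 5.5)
The plan is to use Bridgeland's classification of geometric stability conditions on surfaces, \cite[Proposition~10.3]{Bri08}, as refined by Bayer--Macrì for arbitrary $B$-field. First I would check that $\sigma^S_{a,b}$ is geometric: every skyscraper sheaf $\cO_x$ is $\sigma^S_{a,b}$-stable of phase $1$. By definition, $\cO_x\in\cP^S_{a,b}(1)$ if and only if $\iota_*\cO_x\in\cP^{\PP^n}_{a,b}(1)$. This holds because skyscrapers on $\PP^n$ are semistable of phase $1$: the pullback $\pi^*\cO_x$ is a direct sum or extension of skyscrapers on $C^n$, which are stable of phase $1$ by Theorem~\ref{symmetric stability}. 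Semistability is all the argument below uses. For stability one needs $\cO_x$ to have no proper subobjects in $\cP^S(1)$. I would argue that any subobject in $\cA^S_{a,b}$ of phase $1$ must be supported at $x$ and have $Z$-value a positive real multiple of $-1$. This forces it to be $\cO_x$ itself, using that the $\cP(1)$ objects of class $[\cO_x]$ are minimal; alternatively I can bypass stability entirely, as follows.

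Second, with skyscrapers in $\cP(1)$, \cite[Lemma~10.1]{Bri08} gives the key inclusion. It says that for $E\in\cA^S_{a,b}$, $\cH^i(E)=0$ for $i\ne -1,0$, that $\cH^{-1}(E)$ is torsion-free, and that $\Coh(S)\subset\cP^S_{a,b}(-1,1]$ (the dimension-$2$ case of the bound used in the proof of Theorem~\ref{thm:effective restriction}). Consequently $\cA^S_{a,b}=\langle\cT,\cF[1]\rangle$ with $\cT=\Coh(S)\cap\cA^S_{a,b}$ and $\cF=\Coh(S)\cap\cA^S_{a,b}[-1]$, a torsion pair in $\Coh(S)$. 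Since two hearts obtained by tilting $\Coh(S)$ coincide once their torsion parts coincide, it suffices to show $\cT=\cT_{B_S}$. I would prove this in two steps.

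For $\cT\subset\cT_{B_S}$, take $E\in\cT$ and a $\mu_{B_S}$-semistable torsion-free quotient $E\twoheadrightarrow Q$. Then $Q\in\cT$, so $\Im Z(Q)=aH_S\ch_1^{B_S}(Q)\ge0$, which gives $\mu_{B_S}(Q)\ge0$. I must exclude $\mu_{B_S}(Q)=0$. In that case $Z(Q)$ is real, so $Q\in\cP(1)$, and the $\ch_2$ part must then make $Z(Q)<0$. Bogomolov together with the term $\tfrac{a^2}{2}H_S^2\ch_0$ gives $\Re Z(Q)>0$ once $a$ is large, which is a contradiction. Here I expect to need that $a^2H^2/2-\ch_2(N)/12>0$, which should follow from $a\ge a(S,H)$. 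For the reverse inclusion $\cT_{B_S}\subset\cT$, take a $\mu_{B_S}$-stable torsion-free $E$ with $\mu_{B_S}(E)>0$. Its decomposition $0\to T\to E\to F\to0$ in $\cT,\cF$ gives $F\in\cF$. The sheaf $F$ must have $\mu^+_{B_S}(F)\le0$: otherwise a destabilizing quotient $F\twoheadrightarrow F'$ with $\mu>0$ would lie in $\cF$, so $F'[1]\in\cA$, and $\Im Z(F'[1])<0$, a contradiction. Stability of $E$ then forces $F=0$. Torsion sheaves lie in $\cT$ by the same imaginary-part argument together with phase $1$ for zero-dimensional sheaves.

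The main obstacle I expect is the boundary case $\mu_{B_S}=0$. There one must show that sheaves of slope zero lie in $\cF$, i.e.\ that $\Re Z>0$ on them. Equivalently, no torsion-free $\mu$-semistable sheaf with $\mu_{B_S}=0$ can sit in $\cP(1)$. I would first try to settle this via Bogomolov--Gieseker and the lower bound on $a$. If that bound is not strong enough, I would instead fall back on the restriction Lemma~\ref{restriction} to curves $Y\in|H_S|$: there the heart is known to be the standard $\Coh(Y)$ rotated, and a slope-zero bundle restricts to a degree-zero object of phase $1/2$, contradicting phase $1$ via the inequality $\phi^-(E)\le\phi^-(E|_Y)$.
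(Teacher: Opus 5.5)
Your proposal is correct in outline and has the same skeleton as the paper. \cite[Lemma~10.1]{Bri08} gives $\Coh(S)\subset\cP^S_{a,b}(-1,1]$, so $\cA^S_{a,b}$ is a tilt of $\Coh(S)$, and the two torsion pairs are compared through the sign of $\Im Z^S_{a,b}=aH_S\ch_1^{B_{\!S}}$. Your inclusion $\cT_{B_{\!S}}\subset\cT$ is the paper's argument. The paper does it more directly: the quotient $E_2\in\cF$ has $\Im Z(E_2)\le 0$, hence $\mu_{B_{\!S}}(E_2)\le 0$, although it is a quotient of $E$. In your version, note that it is the first HN \emph{subsheaf} of $F$, not a quotient, that lies in $\cF$, because $\cF$ is closed under subobjects.

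The genuine difference is the boundary case: a torsion-free $Q\in\cT$ with $\mu_{B_{\!S}}(Q)=0$, i.e.\ $Z(Q)\in\RR_{<0}$. You rule it out numerically via Bogomolov--Gieseker and Hodge index, which requires $\frac{a^2H_S^2}{2}-\frac{\ch_2(N_{S/\PP^n})}{12}>0$. You left this as ``should follow''. It does follow, by the surface analogue of Lemma~\ref{lem:bound}: global generation of $N_{S/\PP^n}$ and of $N^\vee_{S/\PP^n}(m)$, together with Hodge index, give $\ch_2(N_{S/\PP^n})\le\frac12 m^2(n-2)^2H_S^2<6a^2H_S^2$. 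But it has to be written out.

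The paper needs no numerics at this point. By \cite[Lemma~10.1(b)]{Bri08}, every stable object of phase one is either some $\cO_x$ or some $F[1]$ with $F$ locally free. Neither maps nontrivially to a torsion-free sheaf, so no torsion-free sheaf lies in $\cP^S_{a,b}(1)$. Since you already invoke that lemma, part (b) is the cheaper route. Your route (and your fallback through curves) uses more input and only buys the slightly stronger conclusion $Z(Q)>0$.

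One correction: your claim that semistability of skyscrapers suffices, and the promised ``bypass'', are wrong. The proof of \cite[Lemma~10.1]{Bri08} uses $\Hom(\cO_x,E)=0$ for every stable $E\not\cong\cO_x$ of phase one, which needs $\cO_x$ to be stable. The conclusion genuinely fails without this. At a wall of type $C_k$ bounding the geometric chamber of a K3 surface containing a $(-2)$-curve $C$ (see \cite{Bri08}), each $\cO_x$ with $x\in C$ is strictly semistable of phase one with stable factor $\cO_C(k)[1]$. Then the torsion sheaf $\cO_C(k)$ has phase $0$, contradicting part (c) of that lemma. So you do need stability.

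The paper simply asserts stability. It can be checked by pulling back to $C^n$, where skyscrapers are stable, so the subobjects of a zero-dimensional sheaf in $\cP^{(n)}_{2a,2b}(1)$ are exactly its subsheaves. Flatness of $\pi$ and faithfulness of $\iota_*$ then show that $\cO_x$ has no proper nonzero subobject in $\cP^S_{a,b}(1)$.
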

\begin{proof}
Since the skyscraper sheaf $\cO_x$ for any point $x \in S$ is stable of phase one, \cite[Lemma~10.1]{Bri08} implies that\footnote{The proof of \cite[Lemma~10.1]{Bri08} is valid for any smooth surface.}
\[
\Coh(S) \subset \cP^S_{a, b}(-1, 1].
\]
Hence, we have the torsion pair
\[
\mathcal{T}^S_{a, b} = \Coh(S) \cap \cP^S_{a, b}(0, 1] \ , \qquad  
\mathcal{F}^S_{a, b} = \Coh(S) \cap \cP^S_{a, b}(-1, 0]
\]
on $\Coh(S)$, and
\[
\cP^S_{a, b}(0, 1] = \langle \mathcal{T}^S_{a, b}, \mathcal{F}^S_{a, b}[1] \rangle.
\]
To prove the claim, it suffices to show that
\begin{equation}\label{claim}
    \cT_{B_{\!S}} \subset \mathcal{T}^S_{a, b}
    \qquad\text{and}\qquad
    \cF_{B_{\!S}} \subset \mathcal{F}^S_{a, b}.
\end{equation}
Indeed, since both pairs define torsion pairs, these inclusions imply that the two torsion pairs coincide. For any coherent sheaf $E$, we consider the short exact sequence
\[
0 \to E_1 \to E \to E_2 \to 0
\]
such that $E_1 \in \mathcal{T}^S_{a, b}$ and $E_2 \in \mathcal{F}^S_{a, b}$. 

First assume $E \in \cT_{B_{\!S}}$, i.e.\ $\mu_{B_{\!S}}^-(E) > 0$. If $E_2 \neq 0$, then $\Im (Z_{a, b}(E_2) ) \leq 0$, and hence $\mu_{B_{\!S}}(E_2) \leq 0$, which is impossible. Thus $E = E_1 \in \mathcal{T}^S_{a, b}$.

Similarly, if $E \in \cF_{B_{\!S}}$, i.e.\ $\mu_{B_{\!S}}^+(E) \leq 0$, then either $E_1 = 0$, or $E_1$ is a torsion-free sheaf satisfying
\[
\mu_{B_{\!S}}(E_1) = \mu_{B_{\!S}}^+(E_1) = 0.
\]
However $E_1 \in \cP_{a, b}^S(0, 1]$ with zero imaginary part, hence it is $\sigma_{a, b}^S$-semistable of phase one, which is impossible by \cite[Lemma~10.1.(b)]{Bri08}. This completes the proof of \eqref{claim}. 
\end{proof}



\subsection{Threefolds}
Let $(X,H)$ be a smooth polarized threefold. In this section, we prove Theorem~\ref{thm-intro-2} stated in the Introduction. We know 
\begin{align}
    Z^X_{a, b}(E) 
    &= - \left( \smash{\widetilde\ch}_3^{bH} - \frac{a^2}{2} H^2 \smash{\widetilde\ch}_1^{bH} \right) + i  \left( a H \smash{\widetilde\ch}_2^{bH} - \frac{a^3}{6} H^3 \smash{\widetilde\ch}_0^{bH} \right)
\end{align}
and so
\begin{align}
    \Re  \left( Z^X_{a, b}(E) \right) &= - \ch_3^{B_{\!X}}(E)  + \left( \frac{a^2 H^2}{2}  - \frac{\ch_2(N_{X/\PP^n})}{12}  \right) \ch_1^{B_{\!X}}(E) \nonumber \\
    \frac{1}{a} \Im \left( Z^X_{a, b}(E) \right) &=   H \ch_2^{B_{\!X}}(E)  - \left( \frac{a^2H^2}{6}  - \frac{\ch_2(N_{X/\PP^n})}{12}\right) H  \ch_0(E). \label{imag}
\end{align}

\begin{proposition}\label{prop.tilt}
       For any object $E \in \cP^X_{a, b}( 0,1]$, we have 
   \begin{equation}\label{comp.slope}
       \mu_{B_{\!X}}^+\left(\cH^{-2}(E) \right) \leq -\frac{1}{2} \ , \qquad \mu_{B_{\!X}}^-\left(\cH^0(E) \right) > \frac{1}{2}.  
   \end{equation}
   In particular, we have 
   \begin{equation}\label{first heart}
       \Coh^{B_{\!X}}(X) \subset \cP_{a, b}^X(-1,1]. 
   \end{equation}
\end{proposition}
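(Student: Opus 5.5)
The plan is to reduce to the surface case (Lemma~\ref{induced heart on surface}) by restricting to a general smooth hyperplane section $\jmath\colon Y\hookrightarrow X$, and to control phases with Lemma~\ref{restriction}. First I bound where $E$ and $E|_Y$ can sit. Since skyscraper sheaves are stable of phase $1$, \cite[Lemma~10.1]{Bri08} gives $\Coh(X)\subset\cP^X_{a,b}(-2,1]$. Hence an object $E\in\cP^X_{a,b}(0,1]$ has cohomology sheaves only in degrees $-2,-1,0$.

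For the restriction, Lemma~\ref{restriction} gives $\phi^-_{a,b}(E|_Y)\ge\phi^-_{a,b}(E)>0$. Applying its second inequality to $E(-H)$, and then the monotonicity of Theorem~\ref{thm:stab P^n} (transported to $X$ as in the proof of Lemma~\ref{restriction}), gives
\[
\phi^+_{a,b}(E|_Y)\le\phi^+_{a,b}(E(-H))+1\le 2 .
\]
So $E|_Y\in\cP^Y_{a,b}(0,2]$. By Lemma~\ref{induced heart on surface}, $E|_Y$ therefore has cohomology only in degrees $-1,0$ with respect to the heart $\Coh^{B_Y}(Y)$.

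From this I read off the sheaf cohomology of $E|_Y$. Its top sheaf is $\cH^0(E|_Y)=\cH^0(E)|_Y$ by right exactness of restriction, and it must lie in $\cT_{B_Y}$. Its bottom sheaf $\cH^{-2}(E|_Y)$ must lie in $\cF_{B_Y}$. Because $Y$ is general, $\cH^{-2}(E)|_Y\hookrightarrow\cH^{-2}(E|_Y)$: the higher Tor terms vanish, as $\cH^{-2}(E)$ is torsion free.

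Next I transfer these slope bounds back to $X$ without Mehta--Ramanathan. If $G\subset\cH^{-2}(E)$ is the maximal destabilising subsheaf, then for general $Y$ the restriction $G|_Y$ is a subsheaf of $\cH^{-2}(E|_Y)$ with the same $H$-slope. So $\mu^+_{B_X}(\cH^{-2}(E))$ is bounded by $\mu^+$ of $\cH^{-2}(E|_Y)$, which is $\le 0$ in the $B_Y$-normalisation. Dually, the minimal destabilising quotient of $\cH^0(E)$ restricts to a quotient of $\cH^0(E)|_Y$, giving the bound $>0$ in that normalisation.

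The main point, and the step I expect to be the real obstacle, is to get the constants $\mp\tfrac12$ rather than $0$. This should come from comparing normalisations. By transversality $N_{Y/\PP^{n-1}}\cong N_{X/\PP^n}|_Y$, so $B_Y=B_X|_Y$ at the level of the surface's own central charge. However, compatibility of slicings, $\jmath_*\cP^Y\subset\cP^X$, forces $Z^X(\jmath_*F)$ to agree (up to a positive scalar) with $Z^Y(F)$. By Grothendieck--Riemann--Roch, $\ch(\jmath_*F)$ involves $\td(N_{Y/X})^{-1}=\td(\cO_Y(H))^{-1}$, whose first-order term is $e^{-H_Y/2}$. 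Equivalently, one of the two triangles relating $E|_Y$, $E$ and $E(\pm H)$ effectively twists by $\cO(H/2)$. So I will carefully track $\jmath_*\jmath^*E$ via the triangle $E(-H)\to E\to\jmath_*(E|_Y)$, comparing $\ch(\jmath_*(E|_Y))=\ch(E)(1-e^{-H})$ with $H\ch(E)\,e^{-H/2}\cdot(\text{higher order})$. This should show that the surface-side tilt at $B_Y$ corresponds to the slope threshold $B_X\pm\tfrac{H}{2}$ on $X$, yielding \eqref{comp.slope}. If the sign bookkeeping fails for one of the two inequalities, I would instead apply the same argument to $E\otimes\cO(\pm H)$ and use Theorem~\ref{thm:stab P^n}.

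Finally I prove \eqref{first heart}. Let $T\in\cT_{B_X}$ and take its HN decomposition, with $T'\in\cP(-2,-1]$ its lowest part. Then $T'[1]\in\cP(-1,0]$ and $T'[2]\in\cP(0,1]$. Applying \eqref{comp.slope} to $T'[2]$, and using that $T\to T'$ is surjective on $\cH^0$, a nonzero $\cH^0$-part would be a quotient of $T$ with $\mu_{B_X}\le0$ being forced into the wrong range, a contradiction; so $T\in\cP(-1,1]$. Symmetrically, for $F\in\cF_{B_X}$ the top part of $F[1]$ in $\cP(1,2]$, shifted by $[-1]$, lies in $\cP(0,1]$ and would have $\cH^{-2}$-part containing a subsheaf of $F$, which is excluded by the first bound in \eqref{comp.slope}. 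Closure under extensions then gives $\Coh^{B_X}(X)\subset\cP^X_{a,b}(-1,1]$.
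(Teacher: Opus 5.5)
Your overall strategy is the paper's, and your $\cH^0$ half goes through as you describe. You restrict to a smooth $Y\in|H|$, place the restriction in $\cP^Y_{a,b}(0,2]$ via Lemma~\ref{restriction}, read off its sheaf cohomology from Lemma~\ref{induced heart on surface}, and transport slopes back through restrictions of HN factors. The gap is the constant for $\cH^{-2}$.

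The normalisation shift goes in one direction only. For $\jmath_*\cP^Y\subset\cP^X$ to hold, the surface condition has to be induced through $Y\subset X\subset\PP^n$. Then $0\to N_{Y/X}\to N_{Y/\PP^n}\to N_{X/\PP^n}|_Y\to 0$ gives $B_Y=B_X|_Y+\tfrac12 H_Y$. Equivalently, $\mu_{B_X}(G)=\mu_{B_Y}(G|_Y)+\tfrac12$ for every torsion-free $G$, with the same sign for subsheaves and quotients; there is no ``$B_X-\tfrac12H$'' threshold. So from $\cH^{-2}(E|_Y)\in\cF_{B_Y}$ your main line only yields $\mu^+_{B_X}(\cH^{-2}(E))\le+\tfrac12$. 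This is too weak for \eqref{first heart}: in your $\cT_{B_X}$ step you need $\mu^+_{B_X}(\cH^0(T'))\le 0<\mu^-_{B_X}(T)$ to rule out the nonzero map $T\to\cH^0(T')$.

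The missing idea is to restrict $E(H)$ rather than $E$.
\begin{itemize}
\item The second inequality of Lemma~\ref{restriction} gives $\phi^+_{a,b}(E(H)|_Y)\le\phi^+_{a,b}(E)+1\le 2$.
\item The first inequality together with the monotonicity of Theorem~\ref{thm:stab P^n} gives $\phi^-_{a,b}(E(H)|_Y)\ge\phi^-_{a,b}(E(H))\ge\phi^-_{a,b}(E)>0$.
\item Hence $E(H)|_Y\in\cP^Y_{a,b}(0,2]$, so $\cH^{-2}(E|_Y)(H)=\cH^{-2}(E(H)|_Y)\in\cF_{B_Y}$. This gives $\mu^+_{B_Y}(\cH^{-2}(E|_Y))\le -1$.
\item After the $+\tfrac12$ shift, $\mu^+_{B_X}(\cH^{-2}(E))\le-\tfrac12$.
\end{itemize}
Your fallback gestures at this but is never carried out. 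Note also that the essential input is the upper phase bound from Lemma~\ref{restriction}, not monotonicity.

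There are three smaller slips.
\begin{itemize}
\item You assert that $\cH^{-2}(E)$ is torsion free without proof. It follows from Bridgeland's argument that objects of $\cP^X_{a,b}(0,1]$ are three-term complexes of locally free sheaves. You need it so that $\mu^+$ is realised by a saturated torsion-free subsheaf of finite slope.
\item In the $\cF_{B_X}[1]$ step, the nonzero map from the top part $F''[-1]\in\cP^X_{a,b}(0,1]$ to $F$ factors through $\cH^0(F''[-1])$. So the contradiction comes from the second bound in \eqref{comp.slope}, not the first.
\item $\cH^0(T)\to\cH^0(T')$ need not be surjective; you only need $0\neq\Hom(T,T')=\Hom(T,\cH^0(T'))$.
\end{itemize}
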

\begin{proof}
    By the proof of \cite[Lemma~10.1]{Bri08}, any object $E \in  \cP_{a, b}^X(0,1]$ is isomorphic to a length three complex of locally-free sheaves. In particular,
\begin{equation}\label{eq.heart}
     \cP_{a, b}^X(0,1] \subset \langle \Coh(X) , \Coh(X) [1] , \Coh(X)[2] \rangle.
\end{equation}
and $\cH^{-2}(E)$ is a torsion-free sheaf. Let $S \in |H|$ be a smooth hyperplane section. Denote the embedding by $\jmath\colon S \hookrightarrow X$. We take the section $s \colon \cO_X(-H) \hookrightarrow \cO_X$ corresponding to $S$ and tensor by $E$ to get the distinguished triangle
    \begin{equation}
        E(-H) \xrightarrow{s} E \to \jmath_*E|_S  \to E[1].
    \end{equation}
    Taking cohomology with respect to $\Coh(X)$ gives the commutative diagram  
    \begin{equation}\label{diag}
        \begin{tikzcd}
        	0  & {\cH^{-2}(E(-H))} & {\cH^{-2}(E)} & {\cH^{-2}(\jmath_*E|_S)} & \cdots \\
        	0 & {\cH^{-2}(E)(-H)} & {\cH^{-2}(E)} & {\jmath_*\cH^{-2}(E)|_S} & 0.
        	\arrow[from=1-1, to=1-2]
        	\arrow[from=1-2, to=1-3]
        	\arrow[equals, from=1-2, to=2-2]
        	\arrow[from=1-3, to=1-4]
        	\arrow[equals, from=1-3, to=2-3]
        	\arrow[from=1-4, to=1-5]
        	\arrow[from=2-1, to=2-2]
        	\arrow[from=2-2, to=2-3]
        	\arrow[from=2-3, to=2-4]
        	\arrow[hook, from=2-4, to=1-4, "d"]
        	\arrow[from=2-4, to=2-5]
        \end{tikzcd}
    \end{equation}
    On the other hand, for any torsion-free sheaf $F$, the normal bundle sequence
\[
0 \to N_{S/X} \to N_{S/\PP^n} \to N_{X/\PP^n}|_S \to 0
\]
implies
\begin{equation}\label{slope-rest}
    \mu_{B_{\!X}}(F)
    =
    \mu_{B_{\!S}}(F|_S)+\frac{1}{2}.
\end{equation}
Hence
\[
\mu_{B_{\!X}}^+(\cH^{-2}(E))
\overset{(a)}{\leq}
\mu_{B_{\!S}}^+(\cH^{-2}(E)|_S)+\frac{1}{2}
\overset{(b)}{\leq}
\mu_{B_{\!S}}^+\bigl(\cH^{-2}(E|_S)\bigr)+\frac{1}{2}
=
\mu_{B_{\!S}}^+\bigl(\cH^{-2}(E(H)|_S)\bigr)-\frac{1}{2}
\overset{(c)}{\leq}
-\frac{1}{2},
\]
where (a) follows from the torsion-freeness of $\cH^{-2}(E)$ together with \eqref{slope-rest}, and (b) follows from the injectivity of the morphism $d$ in diagram \eqref{diag}. To prove (c), Lemma~\ref{restriction} implies that
\[
E(H)|_S \in \cP^S_{a,b}(0,2].
\]
Therefore, by Lemma~\ref{induced heart on surface}, we have $\cH^{-2}(E(H)|_S)\in \cF_{B_{\!S}}$ which implies (c). This completes the proof of the first inequality in \eqref{comp.slope}.

\medskip

    Similarly, we have the commutative diagram 
    \begin{equation}
        \begin{tikzcd}
        	\dots & {\cH^{0}(E(-H))} & {\cH^{0}(E)} & {\cH^{0}(\jmath_*E|_S)} & 0 \\
        	\dots & {\cH^{0}(E)(-H)} & {\cH^{0}(E)} & {\jmath_*\big(\cH^{0}(E)\big)|_S} & 0
        	\arrow[from=1-1, to=1-2]
        	\arrow[from=1-2, to=1-3]
        	\arrow[equals, from=1-2, to=2-2]
        	\arrow[from=1-3, to=1-4]
        	\arrow[equals, from=1-3, to=2-3]
        	\arrow[from=1-4, to=1-5]
        	\arrow[from=2-1, to=2-2]
        	\arrow[from=2-2, to=2-3]
        	\arrow[from=2-3, to=2-4]
        	\arrow[equals, from=2-4, to=1-4]
        	\arrow[from=2-4, to=2-5]
        \end{tikzcd}
    \end{equation}
    where the surjectivity forces the vertical isomorphism on the right. If $\mu_{B_{\!X}}^-\left(\cH^0(E)\right)= + \infty$ then the claimed inequality trivially holds; so we may assume the slope of the last HN factor is finite, then this factor is torsion-free hence we can argue as before.
   Therefore,
\begin{equation}
    \mu_{B_{\!X}}^-\bigl(\cH^{0}(E)\bigr)
    \geq
    \mu_{B_{\!S}}^-\bigl(\cH^{0}(E)|_S\bigr)+\frac{1}{2}
    =
    \mu_{B_{\!S}}^-\bigl(\cH^{0}(E|_S)\bigr)+\frac{1}{2}
    >
    \frac{1}{2},
\end{equation}
where the last inequality follows from Lemma~\ref{restriction} and Lemma~\ref{induced heart on surface}. This proves the second inequality in \eqref{comp.slope}.

To prove \eqref{first heart}, let $E \in \Coh^{B_{\!X}}(X)$, and denote by $E^+$ (resp.~$E^-$) the HN factor of $E$ with respect to $\sigma^X_{a,b}$ of maximal (resp.~minimal) phase. First assume that $E\in\mathcal{T}_{B_{\!X}}$. If $E^+\in\cP^X_{a,b}(1,\infty)$, then by \eqref{eq.heart} we have $\cH^i(E^+)=0$ for $i\geq 0$, and hence $\Hom(E^+,E)=0$, a contradiction. A similar argument shows that $\phi_{a,b}(E^-)>-2$. Now suppose that $E^-\in\cP^X_{a,b}(-2,-1]$. Then $\mu_{B_{\!X}}^+(\cH^0(E^-))\leq 0$ by \eqref{comp.slope}, while $\cH^i(E^-)=0$ for $i<0$ by \eqref{eq.heart}. It follows that $\Hom(E,E^-)=0$, again a contradiction. Therefore $E^\pm$, and hence $E$, lie in $\cP^X_{a,b}(-1,1]$.

Now suppose that $E\in\mathcal{F}_{B_{\!X}}[1]$. If $E^-\in\cP^X_{a,b}(-\infty,-1]$, then by \eqref{eq.heart} we have $\cH^i(E^-)=0$ for $i<0$, and therefore $\Hom(E,E^-)=0$, which is impossible. A similar argument shows that $\phi_{a,b}(E^+)\leq 2$. If $E^+\in\cP^X_{a,b}(1,2]$, then \eqref{comp.slope} implies that $\mu_{B_{\!X}}^-(\cH^{-1}(E^+))>0$, which contradicts the existence of a nonzero morphism $E^+\to E$, since $\mu_{B_{\!X}}^+(E[-1])\leq 0$. This completes the proof of \eqref{first heart}.

\end{proof}


As a direct corollary of Proposition \ref{prop.tilt}, the pair 
    \begin{equation}\label{first pair}
        \cT^X_{a, b} = \Coh^{B_{\!X}}(X) \cap \cP^X_{a, b}(0, 1] \qquad \text{and} \qquad  
\cF^X_{a, b} = \Coh^{B_{\!X}}(X) \cap \cP^X_{a, b}(-1, 0]
    \end{equation}
defines a torsion pair on $\Coh^{B_{\!X}}(X)$. 

\medskip

On the other hand, one can construct a double-tilted heart as in \cite{Bayer-Macri-Toda:2014}. Assuming that
\begin{equation}\label{eq:weak}
    a^2 \geq \frac{H \ch_2(N_{X/\PP^n})}{2H^3},
\end{equation}
we define, for objects $E\in \Coh^{B_{\!X}}(X)$, the slope function
\begin{equation}\label{slope}
    \nu_{a,b}(E)
    =
    \frac{H \smash{\widetilde\ch}^{bH}_2(E)-\frac{a^2}{6}H^3\smash{\widetilde\ch}^{bH}_0(E)}
    {H^2 \smash{\widetilde\ch}^{bH}_1(E)\vphantom{\widetilde{\ch}}}
    =
    \frac{H \ch_2^{B_{\!X}}(E)-\left(\frac{a^2}{6}H^3-\frac{1}{12}H \ch_2(N_{X/\PP^n})\right)\ch_0(E)}
    {H^2 \ch_1^{B_{\!X}}(E)}
\end{equation}
whenever the denominator is nonzero, and $\nu_{a,b}(E)=+\infty$ otherwise. This defines a torsion pair 
\begin{equation}\label{second pair}
({\cT}'_{a, b},\,{\cF}'_{a, b})    
\end{equation}
on $\Coh^{B_{\!X}}(X)$, where ${\cT}'_{a, b}$ (resp.~${\cF}'_{a, b}$) consists of objects $E\in\Coh^{B_{\!X}}(X)$ satisfying $\nu^-_{a,b}(E)>0$ (resp.~$\nu^+_{a,b}(E)\leq 0$)\footnote{Tilting $\Coh^{B_{\!X}}(X)$ with respect to the torsion pair $({\cT}'_{a,b},{\cF}'_{a,b})$ is precisely the double-tilt construction
\[
\bigl(\Coh(X)^{\mathcal Z_b^{(1)}}\bigr)^{\mathcal Z_{a,b}^{(2)}},
\]
introduced in the Introduction, where
\[
\mathcal Z_b^{(1)}
=
-H^2\smash{\widetilde\ch}^{bH}_1
+iH^3\smash{\widetilde\ch}^{bH}_0,
\qquad
\mathcal Z_{a,b}^{(2)}
=
-H\smash{\widetilde\ch}^{bH}_2
+\frac{a^2}{6}H^3\smash{\widetilde\ch}^{bH}_0
+iH^2\smash{\widetilde\ch}^{bH}_1.
\]}. 
The following elementary lemma shows that condition \eqref{eq:weak} is automatically satisfied under assumption \eqref{condition.eq}.



\begin{lemma}\label{lem:bound}
    We have
    \begin{equation}
        \frac{ H  \ch_2(N_{X/\PP^n})}{2 H^3} 
        < \left(\frac{m(X \xhookrightarrow{\iota} \PP^n)\,n}{2}\right)^2 .
    \end{equation}
\end{lemma}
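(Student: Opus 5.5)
The approach is to bound $H\ch_2(N_{X/\PP^n})$ from above using two positivity statements for the normal bundle $N \coloneqq N_{X/\PP^n}$, of rank $c \coloneqq n-3$. One statement comes from the Euler sequence and the other from the resolution \eqref{eq:resolution}. If $c=0$ then $N=0$ and the claim is trivial, since $m\ge 1$ (indeed $d_{1,j}\geq 1$). So assume $c\geq 1$.

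\emph{Step 1: the two positivity statements.} The surjection $\cO_X(1)^{\oplus(n+1)}\twoheadrightarrow T_{\PP^n}|_X\twoheadrightarrow N$ shows that $E\coloneqq N(-1)$ is globally generated. Fix a resolution realizing $m=m(X\xhookrightarrow{\iota}\PP^n)$, and put $m'\coloneqq\lfloor m\rfloor$. Since the maximum in \eqref{eq:def m} includes $i=1$, every generator degree satisfies $d_{1,j}\le m'$. The surjection $\bigoplus_j\cO_X(-d_{1,j})\twoheadrightarrow \cI/\cI^2 = N^\vee$ then shows that $N^\vee(m')=E^\vee(m'-1)$ is globally generated. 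Globally generated bundles have nef $c_1$ and satisfy $H\cdot c_2\ge 0$. Hence $D\coloneqq c_1(E)$ and $c(m'-1)H-D$ are both nef, and $H\,c_2(E)\geq 0$.

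\emph{Step 2: the Chern character computation.} We have
\[
H\ch_2(N)=H\ch_2(E)+H^2D+\tfrac{c}{2}H^3,\qquad H\ch_2(E)=\tfrac12 HD^2-Hc_2(E)\le \tfrac12 HD^2 .
\]
Intersecting the two nef classes with $H$ gives $H\cdot D\cdot\bigl(c(m'-1)H-D\bigr)\ge 0$, so $HD^2\le c(m'-1)\,H^2D$. Set $t\coloneqq H^2D/H^3$. Nefness gives $0\le t\le c(m'-1)$. Therefore
\[
\frac{H\ch_2(N)}{2H^3}\le \frac{t\,c(m'-1)}{4}+\frac t2+\frac c4\le \frac{c^2(m'-1)^2}{4}+\frac{c(m'-1)}{2}+\frac c4=\frac{\bigl(c(m'-1)+1\bigr)^2+c-1}{4}.
\]

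\emph{Step 3: elementary comparison.} With $c=n-3$ and $1\le m'\le m$, it remains to check
\[
\bigl((n-3)(m'-1)+1\bigr)^2+n-4<n^2m^2 .
\]
This holds because $(n-3)(m'-1)+1\le nm-2$, using $n\geq 3$ and $m\ge1$, and $(nm-2)^2+n<n^2m^2$ for $nm\ge n\geq 3$. This is the required strict inequality.

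The main obstacle is Step 1, in two places. First, one must justify that the ideal generators already give a surjection onto the conormal bundle, with degrees controlled by $m$. Second, one must pass from rational $m$ to the integer twist $m'$. The estimate in Step 2 is then routine intersection theory of nef classes.
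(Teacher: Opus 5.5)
Your proof is correct. It uses the same two sources of positivity as the paper: the Euler sequence, and the surjection from the first-step generators of $\cI_{X/\PP^n}$ onto the conormal bundle. It combines them differently, though.

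The paper works with $N$ itself. Global generation of $N$ gives $Hc_2(N)\ge 0$ and $H^2c_1(N)\ge 0$, and the conormal surjection gives $H^2c_1(N)\le m(n-3)H^3$. The Hodge index theorem, $H^3\cdot Hc_1(N)^2\le \bigl(H^2c_1(N)\bigr)^2$, then gives $H\ch_2(N)/H^3\le \frac12 m^2(n-3)^2$, from which the strict inequality is immediate.

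You instead use the stronger fact that $N(-1)$ is globally generated, and you replace Hodge index by the nef-product inequality $H\cdot D\cdot\bigl(c(m'-1)H-D\bigr)\ge 0$. This gives the slightly sharper bound $H\ch_2(N)/H^3\le\frac12\bigl(c^2(m'-1)^2+c(2m'-1)\bigr)\le\frac12 c^2m^2$. The cost is a less transparent numerical comparison in Step 3, whereas the paper's final step is just $(n-3)^2<n^2$.

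One small point: your parenthetical claim that $d_{1,j}\ge 1$ for every $j$ need not hold literally, since a resolution may contain summands that map by zero. You do not need it. Once $c\ge 1$, nefness of both $D$ and $c(m'-1)H-D$ gives $c(m'-1)H^3\ge H^2D\ge 0$, hence $m\ge m'\ge 1$.
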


\begin{proof}
Combining the Euler sequence $0 \to \mathcal{O}_X \to \mathcal{O}_X(H)^{\oplus(n+1)} \to T_{\PP^n}|_X \to 0$ and the normal bundle sequence $0 \to T_X \to T_{\PP^n}|_X \to N_{X/\PP^n} \to 0$ shows that $N_{X/\PP^n}$ is globally generated. In particular, $N_{X/\PP^n}$ is nef, and therefore
\begin{equation}\label{eq:bound c2}
    H c_2(N_{X/\PP^n}) \geq 0.
\end{equation}

On the other hand, by taking $m=\max\{d_{1,j}\} \leq m(X \xhookrightarrow{\iota} \PP^n)$ from the resolution \eqref{eq:resolution},
the sheaf $\mathcal{I}_{X/\PP^n}(m)$ is globally generated. 
Hence the surjection
\begin{equation}
    \mathcal{I}_{X/\PP^n}(m)\twoheadrightarrow
    \mathcal{I}_{X/\PP^n}/\mathcal{I}_{X/\PP^n}^2\otimes\mathcal{O}_X(m)
    =
    N_{X/\PP^n}^{\vee}(m)
\end{equation}
implies
\begin{equation}\label{eq:bound c1}
    0 \leq c_1\!\bigl(N_{X/\PP^n}^{\vee}(m)\bigr) H^2
    =
    -c_1(N_{X/\PP^n}) H^2 + m(n-3)H^3.
\end{equation}
Therefore,
\begin{align}
    \frac{H \ch_2(N_{X/\PP^n})}{H^3}
    &=
    \frac{H c_1(N_{X/\PP^n})^2-2H c_2(N_{X/\PP^n})}{2H^3}
    \overset{\eqref{eq:bound c2}}{\leq}
    \frac{H c_1(N_{X/\PP^n})^2}{2H^3}
    \overset{(*)}{\leq}
    \frac{1}{2}\left(\frac{H^2 c_1(N_{X/\PP^n})}{H^3}\right)^2 \\
    &\overset{\eqref{eq:bound c1}}{<}
    \frac{(mn)^2}{2},
\end{align}
where $(*)$ follows from the Hodge Index Theorem.
\end{proof}


Finally, we show that the two torsion pairs defined in \eqref{first pair} and \eqref{second pair} coincide, thereby completing the proof of our main theorem.

\begin{theorem}\label{thm-main-section3}
Let $(X,H)$ be a smooth projective threefold, and let $a,b\in\mathbb{Q}$ satisfy
\[
a> a(X, H).
\]
    Then the heart of the induced stability condition $\sigma^X_{a,b}$ is given by
    \begin{equation}
        \cP^X_{a,b}(0,1]
        =
        \langle {\cT}'_{a, b},\,{\cF}'_{a, b}[1]\rangle,
    \end{equation}
    where $({\cT}'_{a, b},\,{\cF}'_{a, b})$ is the torsion pair described in \eqref{second pair}.
\end{theorem}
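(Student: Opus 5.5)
By Proposition~\ref{prop.tilt}, the pair $(\cT^X_{a,b},\cF^X_{a,b})$ of \eqref{first pair} is a torsion pair on $\Coh^{B_{\!X}}(X)$. Since $\Coh^{B_{\!X}}(X)\subset\cP^X_{a,b}(-1,1]$, tilting with respect to it gives
\[
\cP^X_{a,b}(0,1]=\langle \cT^X_{a,b},\cF^X_{a,b}[1]\rangle .
\]
So it suffices, exactly as in Lemma~\ref{induced heart on surface}, to show $\cT'_{a,b}\subset\cT^X_{a,b}$ and $\cF'_{a,b}\subset\cF^X_{a,b}$. Two torsion pairs on the same heart with these inclusions must coincide.

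Fix $E\in\Coh^{B_{\!X}}(X)$ and take its decomposition $0\to E_1\to E\to E_2\to 0$ in $\Coh^{B_{\!X}}(X)$, with $E_1\in\cT^X_{a,b}$ and $E_2\in\cF^X_{a,b}$. By \eqref{imag}, $\frac1a\Im Z^X_{a,b}$ is exactly the numerator of $\nu_{a,b}$. Hence for any $F\in\Coh^{B_{\!X}}(X)$, lying in $\cF^X_{a,b}$ forces that numerator to be $\le 0$, and lying in $\cT^X_{a,b}$ forces it to be $\ge 0$.

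\emph{First inclusion.} Let $E\in\cT'_{a,b}$ and suppose $E_2\neq 0$. Then $E_2$ is a quotient of $E$ in $\Coh^{B_{\!X}}(X)$, so its $\nu$-HN factors have $\nu$-slopes $>0$ (or $+\infty$). The denominator $H^2\ch_1^{B_{\!X}}\ge 0$ on $\Coh^{B_{\!X}}(X)$, so the numerator is $\geq 0$ on each factor. It is also $\le 0$ on $E_2$, so it vanishes, and then every $\nu$-factor of $E_2$ has $H^2\ch_1^{B_{\!X}}=0$ and numerator $0$.
\begin{itemize}
\item If such a factor $G$ has $\ch_0=0$, then $G$ is a sheaf supported in dimension $\le 1$, and since $H\ch_2^{B_{\!X}}=0$ it is supported on points. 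Such a $G$ lies in $\cP^X_{a,b}(1)$, which contradicts $E_2\in\cP(-1,0]$ (no nonzero map $E\to E_2\to\cdots$ survives).
\item Otherwise $G\in\cF_{B_{\!X}}[1]$ with $\mu_{B_{\!X}}=0$. Then $\nu(G)=+\infty$ and the numerator equals $H\ch_2^{B_{\!X}}(G)-(\frac{a^2}{6}H^3-\frac1{12}H\ch_2(N))\ch_0(G)$, where $\ch_0(G)<0$. Here I would use the Bogomolov inequality, $H\ch_2^{B_{\!X}}\le 0$ for $\mu$-semistable sheaves of slope $0$ (up to the $H$-component), together with Lemma~\ref{lem:bound} and the strict inequality $a>a(X,H)$, which makes $\frac{a^2}{6}H^3-\frac1{12}H\ch_2(N)>0$. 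This should force the numerator to be strictly positive, a contradiction.
\end{itemize}
Hence $E_2=0$.

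\emph{Second inclusion.} Let $E\in\cF'_{a,b}$ and suppose $E_1\neq0$. Then $E_1\subset E$ has all $\nu$-slopes $\le 0$, so the numerator is $\le 0$ on $E_1$. It is also $\geq 0$, since $E_1\in\cP(0,1]$, so it is $0$. Thus $Z(E_1)\in\RR_{<0}$, and $E_1$ is $\sigma^X_{a,b}$-semistable of phase $1$ with $\nu^+(E_1)\le 0$; if $H^2\ch_1^{B_{\!X}}(E_1)=0$ then $\nu=+\infty$, impossible, so $\nu$ is finite.

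The main obstacle is to rule out a phase-one object $E_1$ with $\nu$-slope $0$, i.e.\ to show that no object of $\Coh^{B_{\!X}}(X)$ with $\Im Z=0$ and $H^2\ch_1^{B_{\!X}}>0$ is $\sigma$-semistable of phase $1$. This is essentially the weak BMT statement, so it cannot be assumed. I would try the dimensional reduction used in Proposition~\ref{prop.tilt}. Restrict $E_1$ to $S\in|H|$ and use Lemma~\ref{restriction} to get $E_1|_S\in\cP^S(\ge 1)$ and $E_1(H)|_S\in\cP^S(\le 2)$. By Lemma~\ref{induced heart on surface}, this places $\cH^{-1}(E_1|_S)$ in $\cF_{B_S}$ and $\cH^0$ appropriately, so $\Im Z^S(E_1|_S)\le 0$ on the relevant piece. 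On the other hand, $\Im Z^S(E_1|_S)=aH_S\ch_1^{B_S}(E_1|_S)$ is computed from $H^2\ch_1^{B_{\!X}}(E_1)>0$ via the analogue of \eqref{slope-rest} for $\ch_1$. Comparing the two should yield a contradiction. If this fails, I would fall back on a deformation argument in $b$, using the continuity of the family $\sigma^X_{a,b}$ and the openness of stability, to push $E_1$ to a wall where its phase-one property contradicts the inequalities of Proposition~\ref{prop.tilt}.
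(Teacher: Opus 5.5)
Your reduction to the two inclusions $\cT'_{a,b}\subset\cT^X_{a,b}$ and $\cF'_{a,b}\subset\cF^X_{a,b}$ matches the paper, and your first inclusion is fine. You even treat the case $H^2\ch_1^{B_X}(E_2)=0$ (via Bogomolov and Lemma~\ref{lem:bound}) more carefully than the paper does. The gap is in the second inclusion: you must rule out a $\sigma^X_{a,b}$-semistable $E_1\in\Coh^{B_X}(X)$ of phase one with $H^2\ch_1^{B_X}(E_1)>0$. This is the entire content of the theorem (the paper's Lemma~\ref{lem-phase one}), and your sketch does not prove it. The comparison you propose does not produce a contradiction. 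For a smooth $S\in|H|$ one has
\[
H_S\ch_1^{B_S}(E_1|_S)=H^2\ch_1^{B_X}(E_1)-\tfrac12 H^3\ch_0(E_1),
\]
so $E_1|_S\in\cP^S[1,2]$ (from Lemma~\ref{restriction}) only gives $H^2\ch_1^{B_X}(E_1)\le\frac12H^3\ch_0(E_1)$. This is compatible with $H^2\ch_1^{B_X}(E_1)>0$ as soon as $\ch_0(E_1)>0$, for instance if $\cH^0(E_1)$ had a torsion-free part of $\mu_{B_X}$-slope in $(0,\frac12]$. Positivity of $H^2\ch_1^{B_X}(E_1)$ does not by itself transfer to $S$.

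The missing idea is control of $\cH^0(E_1)$. The paper obtains it from the fact that every skyscraper $\cO_x$ is $\sigma^X_{a,b}$-stable of phase one. A stable phase-one Jordan--H\"older factor not isomorphic to some $\cO_x$ has $\Hom(-,\cO_x)=0$ for all $x$, hence vanishing $\cH^0$, so $\cH^0(E_1)$ is supported at finitely many points. Choosing $S$ to avoid these points gives $E_1|_S\cong\cH^{-1}(E_1)|_S[1]$ with $\cH^{-1}(E_1)|_S\in\cP^S[0,1]$. Hence $\mu_{B_S}(\cH^{-1}(E_1)|_S)\ge0$, and \eqref{slope-rest} yields $\mu_{B_X}(\cH^{-1}(E_1))\ge\frac12$. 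This contradicts $\cH^{-1}(E_1)\in\cF_{B_X}$ unless $\cH^{-1}(E_1)=0$, in which case $E_1$ is zero-dimensional and $\nu_{a,b}(E_1)=+\infty$.

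Alternatively, you can close your own computation with the bound $\mu^-_{B_X}(\cH^0(E_1))>\frac12$ from Proposition~\ref{prop.tilt}, which applies since $E_1\in\cP^X_{a,b}(0,1]$. This bound makes the $\cH^0$-contribution to the right-hand side above nonnegative. The $\cH^{-1}$-contribution is nonnegative because $\cH^{-1}(E_1)\in\cF_{B_X}$. Equality then forces $\cH^{-1}(E_1)=0$ and $H^2\ch_1^{B_X}(E_1)=0$. Either way some such input is needed, and your deformation-in-$b$ fallback is too vague to supply it.
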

\begin{proof}
    Similar to Lemma \ref{induced heart on surface}, we only need to show
\begin{equation}
    {\cT}'_{a, b} \subset \cT^X_{a, b} \qquad \text{and} \qquad 
    {\cF}'_{a, b} \subset \cF^X_{a, b}.
\end{equation}
For any object $E \in \Coh^{B_{\!X}}(X)$, there is a short exact sequence
\[
0 \to E_1 \to E \to E_2 \to 0
\]
in $\Coh^{B_{\!X}}(X)$ such that $E_1 \in \cT^X_{a, b}$ and $E_2 \in \cF^X_{a, b}$. 

First assume $E \in {\cT}'_{a, b}$, i.e.\ $\nu_{a, b}^-(E) > 0$. If $E_2 \neq 0$, then $\Im ( Z^X_{a, b}(E_2) ) \leq 0$. Then comparing the equation of \eqref{imag} with the slope \eqref{slope} implies $\nu_{a, b}(E_2) \leq 0$ , which is impossible. Thus $E = E_1 \in \cT_{a, b}$.

Similarly, let $E \in {\cF}'_{a,b}$, so that $\nu_{a,b}^+(E)\leq 0$. Then either $E_1=0$, or $E_1\in\cP^X_{a,b}(0,1]$ has zero imaginary part. In the latter case, $E_1$ is $\sigma_{a,b}^X$-semistable of phase one and satisfies $\nu_{a,b}(E_1)=0$. However, the next Lemma \ref{lem-phase one} implies that $E_1$ is a zero-dimensional sheaf, and hence $\nu_{a,b}(E_1)=+\infty$, a contradiction.
\end{proof}

\begin{lemma}\label{lem-phase one}
    Any object $E \in \Coh^{B_X}(X) \cap \cP_{a, b}^X(1)$ is a sheaf supported in dimension zero.
\end{lemma}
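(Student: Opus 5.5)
The plan is to reduce to the surface case by restricting to a general smooth hyperplane section $S\in|H|$, as in the proof of Proposition~\ref{prop.tilt}. I will then read off the sheaf cohomology of $E$ from that of $E|_S$.

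\textbf{Step 1: $E|_S$ has phases in $[1,2]$.} Let $E\in\Coh^{B_{\!X}}(X)\cap\cP^X_{a,b}(1)$. By the first inequality of Lemma~\ref{restriction}, $\phi^-_{a,b}(E|_S)\geq\phi^-_{a,b}(E)=1$. Applying the second inequality of Lemma~\ref{restriction} to $E(-H)$ gives
\[
\phi^+_{a,b}(E|_S)\leq \phi^+_{a,b}(E(-H))+1\leq \phi^+_{a,b}(E)+1=2,
\]
where the middle step uses the monotonicity of Theorem~\ref{thm:stab P^n}, transported to $X$ via $\iota_*$. Hence $E|_S\in\cP^S_{a,b}[1,2]$.

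\textbf{Step 2: translate into sheaf-theoretic conditions on $S$.} By Lemma~\ref{induced heart on surface}, $\cP^S_{a,b}(0,1]=\Coh^{B_{\!S}}(S)$. Therefore $E|_S$ is an extension of an object of $\Coh^{B_{\!S}}(S)[1]$ by an object $Q\in\Coh^{B_{\!S}}(S)\cap\cP^S_{a,b}(1)$. The object $Q$ is the top $\Coh^{B_{\!S}}$-cohomology of $E|_S$. Since $S$ is general, $E|_S$ has sheaf cohomology only in degrees $-1$ and $0$, namely $\cH^{-1}(E)|_S$ and $\cH^{0}(E)|_S$.

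From these constraints I will derive two facts.
\begin{enumerate}[label=(\roman*)]
\item $\cH^0(E)|_S$ is a quotient of $Q$ lying in $\cT_{B_{\!S}}$. Since $\Im Z^S_{a,b}(Q)=0$, we get $H_S\ch_1^{B_{\!S}}(\cH^0(E)|_S)=0$. Hence $\cH^0(E)|_S$ is zero-dimensional.
\item $\cH^{-1}(E)|_S\in\cF_{B_{\!S}}$ and $\cH^{-1}(E)|_S$ is semistable-like of slope $0$. More precisely, any part of $\cH^{-1}(E)|_S$ not absorbed into $\Coh^{B_{\!S}}(S)[1]$ would have to contribute a phase-one piece to $Q$. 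Such a piece is a torsion-free sheaf of $\mu_{B_{\!S}}=0$ shifted by $[1]$, which must be $\sigma^S_{a,b}$-stable of phase one. This is excluded by \cite[Lemma~10.1(b)]{Bri08}, exactly as in the proof of Lemma~\ref{induced heart on surface}.
\end{enumerate}
Consequently $Q$ is a zero-dimensional sheaf on $S$.

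\textbf{Step 3: conclude on $X$.} Since $S$ is a general member of $|H|$, fact (i) shows that $\cH^0(E)$ is supported in dimension $\leq 1$.

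It remains to prove $\cH^{-1}(E)=0$ and that the one-dimensional part of $\cH^0(E)$ vanishes. For this I use the numerical condition $\Im Z^X_{a,b}(E)=0$. By \eqref{imag} it reads
\[
H\ch_2^{B_{\!X}}(E)=\Bigl(\tfrac{a^2}{6}H^3-\tfrac1{12}H\ch_2(N_{X/\PP^n})\Bigr)\ch_0(E).
\]
I combine this with the bound $\mu^+_{B_{\!X}}(\cH^{-1}(E))\leq 0$ and the relation \eqref{slope-rest}. Restricting to $S$ and using Step 2, $\cH^{-1}(E)|_S[1]$ lies in $\Coh^{B_{\!S}}(S)[1]$ with $\Im Z^S$ forced to be non-negative. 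Comparing with $\Im Z^X(E)=0$ should then force $\ch_0(\cH^{-1}(E))=0$ and $H\ch_2(\cH^0(E))=0$. Here I use Lemma~\ref{lem:bound}, which guarantees that the coefficient $\tfrac{a^2}{6}H^3-\tfrac1{12}H\ch_2(N)$ is positive. Torsion-freeness of $\cH^{-1}(E)$ then gives $\cH^{-1}(E)=0$, and $H\ch_2(\cH^0(E))=0$ kills the curve part. Hence $E=\cH^0(E)$ is a zero-dimensional sheaf.

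The main obstacle is this last comparison. It requires careful bookkeeping of how $\Im Z$ changes under restriction: the $\ch_2$ and $\ch_0$ terms on $X$ become $\ch_1$-type terms on $S$, up to the normal-bundle correction from \eqref{slope-rest}. I would try first to express $\Im Z^X_{a,b}$ linearly in terms of $\Im Z^S_{a,b}(E|_S)$ and $\Im Z^S_{a,b}(E(H)|_S)$, and then use the phase bounds on both restrictions coming from Lemma~\ref{restriction}.
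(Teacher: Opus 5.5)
\textbf{There is a genuine gap at the decisive step, $\cH^{-1}(E)=0$.} Your Step~1 and Step~2(i) are correct. But you leave $\cH^{-1}(E)=0$ at ``should then force'', and the mechanism you propose for it cannot work. Put $c=\frac{a^2}{6}H^3-\frac1{12}H\ch_2(N_{X/\PP^n})>0$. Then the condition $\Im Z^X_{a,b}(E)=0$ only says $H\ch_2^{B_{\!X}}(\cH^{-1}(E))=H\ch_2(\cH^0(E))+c\,\rk\cH^{-1}(E)$. There is no sign obstruction to this identity: a torsion-free sheaf with $\mu^+_{B_{\!X}}\le 0$ can have $H\ch_2^{B_{\!X}}>0$ (e.g.\ $\cO_X(-kH)$ for $k\gg0$). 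Whether an object of $\Coh^{B_{\!X}}(X)$ with $\Im Z^X=0$ has phase $1$ or phase $0$ is phase information, not visible from $\Im Z^X$, so Lemma~\ref{lem:bound} cannot help.

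Your 2(ii) is also unsound. \cite[Lemma~10.1]{Bri08}(b) says a stable object of phase one is either $\cO_x$ or a \emph{shifted} locally free sheaf. So it does not exclude $L[1]$ with $\mu_{B_{\!S}}(L)=0$; such objects do lie in $\cP^S_{a,b}(1)$ when $L$ is $\mu_{B_{\!S}}$-semistable. What the proof of Lemma~\ref{induced heart on surface} excludes is an \emph{unshifted} sheaf of positive rank in phase one. The conclusion $\cH^{-1}(E)|_S\in\cF_{B_{\!S}}$ also conflicts with your later claim that $\cH^{-1}(E)|_S[1]\in\Coh^{B_{\!S}}(S)[1]$.

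\textbf{The fix is a sign argument on the total class on $S$, which is what the paper does.} Since $E|_S\in\cP^S_{a,b}[1,2]$, every HN factor has $\Im Z^S\le 0$, so $\Im Z^S_{a,b}(E|_S)\le 0$. For general $S$, the sheaf cohomology of $E|_S$ is $\cH^{-1}(E)|_S$ and $\cH^0(E)|_S$. By your (i), $\Im Z^S_{a,b}(\cH^0(E)|_S)=0$, hence $a H_{\!S}\ch_1^{B_{\!S}}(\cH^{-1}(E)|_S)=-\Im Z^S_{a,b}(E|_S)\ge 0$. If $\cH^{-1}(E)\neq 0$, it is torsion-free of positive rank, so \eqref{slope-rest} gives $\mu_{B_{\!X}}(\cH^{-1}(E))\ge\frac12$. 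This contradicts $\mu^+_{B_{\!X}}(\cH^{-1}(E))\le 0$, so $\cH^{-1}(E)=0$.

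Then $E=\cH^0(E)$ is supported in dimension $\le 1$, so $\Im Z^X_{a,b}(E)=aH\ch_2(E)$. Setting this to zero kills the curve part, as you intended. The paper reaches the same contradiction by a slightly different order. It first shows $\cH^0(E)$ is zero-dimensional via $\Hom(E,\cO_x)=0$ for phase-one stable objects other than $\cO_x$. It then takes $S$ avoiding that support, so that $E|_S[-1]$ is a sheaf in $\cP^S_{a,b}[0,1]$. Your route through (i) plus $\Im Z^X_{a,b}(E)=0$ is a valid substitute for that first step.
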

\begin{proof}
    First assume $E$ is $\sigma^X_{a, b}$-stable of phase one, 
    Since the skyscraper sheaf $\cO_x$ for every point $x \in X$ is $\sigma^X_{a, b}$-stable of phase one, any $\sigma^X_{a, b}$-stable object does not have any morphism to $\cO_x$, and so its $\cH^0$ vanishes. This implies that for our $\sigma^X_{a, b}$- semistable object $E$ its $\cH^0(E)$ is supported in finitely many points on $X$. Taking a smooth section $S \in |H|$ avoiding these points, we have $\cH^0(E_1)|_S=0$, hence $E|_S[-1] \eqcolon F$ is a sheaf. Now assume $F \neq 0$, then by Lemma~\ref{restriction}, we know
    \begin{equation}
       1= \phi_{a, b}^-(E) \leq \phi_{a, b}^-(E|_S) \leq \phi_{a, b}^+(E(H)|_S) \leq \phi_{a, b}^+(E)+1=2
    \end{equation}
    hence $F \in \cP^S_{a,b}[0,1]$.
    In particular, we know every of its HN factors satisfies $\Im Z^S_{a,b} \geq 0$, hence $\Im Z^S_{a,b}(F) \geq 0$. Therefore, we have
    \begin{equation}
        \mu_{B_S}(F) \geq 0,
    \end{equation}
    and so
    \begin{equation}
        \mu_{B_X}(\cH^{-1}(E)) = \mu_{B_S}(F) + \frac{1}{2} \geq \frac{1}{2},
    \end{equation}
    which is not possible as $E \in \Coh^{B_X}(X)$.  
\end{proof}

\begin{corollary} \label{Cor.BMT}
    In the setup of Theorem \ref{thm-main-section3},
    any $\nu_{a,b}$-semistable object $E \in \Coh^{B_{\!X}}(X)$ with $\nu_{a,b}(E)=0$, i.e., 
    $$
     H \smash{\widetilde\ch}_2^{bH} = \frac{a^2}{6} H^3 \smash{\widetilde\ch}_0^{bH} 
    $$
satisfies 
\begin{equation}
    \smash{\widetilde\ch}_3^{bH} <  \frac{a^2}{2} H^2 \smash{\widetilde\ch}_1^{bH}.
\end{equation}
\end{corollary}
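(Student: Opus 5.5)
We would deduce the corollary from Theorem~\ref{thm-main-section3}, following the standard route in \cite{Bayer-Macri-Toda:2014}. The guiding observation is this: once the double-tilted heart $\langle \cT'_{a,b},\cF'_{a,b}[1]\rangle$ is known to be the heart $\cP^X_{a,b}(0,1]$ of an honest stability condition, every nonzero object of it has $Z^X_{a,b}$ in the semi-closed upper half-plane. Moreover, objects with $\Im Z^X_{a,b}=0$ must have strictly negative real part.

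First we would split according to the sign of $\nu_{a,b}$. Let $E\in\Coh^{B_{\!X}}(X)$ be $\nu_{a,b}$-semistable with $\nu_{a,b}(E)=0$. Since $E$ is $\nu$-semistable of slope $0$, we have $\nu^+_{a,b}(E)=\nu^-_{a,b}(E)=0\le 0$, so $E\in\cF'_{a,b}$ and hence $E[1]\in\langle\cT'_{a,b},\cF'_{a,b}[1]\rangle=\cP^X_{a,b}(0,1]$. The degenerate case $H^2\widetilde\ch_1^{bH}(E)=0$ would give $\nu=+\infty$, so it is excluded. Also $E\neq 0$, so $E[1]$ is a nonzero object of the heart.

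Next we compute $Z^X_{a,b}(E[1])=-Z^X_{a,b}(E)$ using the formula for $Z^X_{a,b}$ displayed before \eqref{imag}. The hypothesis $H\widetilde\ch_2^{bH}=\frac{a^2}{6}H^3\widetilde\ch_0^{bH}$ says exactly that $\Im Z^X_{a,b}(E)=a\bigl(H\widetilde\ch_2^{bH}-\frac{a^2}{6}H^3\widetilde\ch_0^{bH}\bigr)=0$. So $E[1]$ is a nonzero object of $\cP^X_{a,b}(0,1]$ with real central charge. Every HN factor has phase in $(0,1]$, and their imaginary parts are nonnegative and sum to zero, so all factors have phase exactly $1$. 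Hence $E[1]\in\cP^X_{a,b}(1)$ and $Z^X_{a,b}(E[1])\in\RR_{<0}$, using that a stability condition sends nonzero semistable objects to nonzero complex numbers. Unwinding, $-Z^X_{a,b}(E)<0$ means $\Re Z^X_{a,b}(E)>0$, that is, $-\bigl(\widetilde\ch_3^{bH}-\frac{a^2}{2}H^2\widetilde\ch_1^{bH}\bigr)(E)>0$, which is precisely the strict inequality claimed.

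The only step needing care is the first: we must check that $E$ lands in $\cF'_{a,b}$ rather than $\cT'_{a,b}$, which depends on the convention that $\nu^+\le 0$ defines $\cF'$. We also need the identification of hearts in Theorem~\ref{thm-main-section3}, which is exactly where the hypothesis $a>a(X,H)$ enters. No further obstacle is expected. In particular, we do not need Lemma~\ref{lem-phase one} here, since strictness comes directly from $Z\neq 0$ on nonzero objects of phase one.
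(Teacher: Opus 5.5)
Your proposal is correct and is exactly the argument the paper intends. The paper gives no separate proof, calling the corollary an immediate consequence of Theorem~\ref{thm-main-section3}. The route is the one you take: $E\in\cF'_{a,b}$ puts $E[1]$ in the heart $\cP^X_{a,b}(0,1]$ with $\Im Z^X_{a,b}(E[1])=0$, so $Z^X_{a,b}(E[1])\in\RR_{<0}$, which unwinds to the strict inequality. Your care in using $\nu_{a,b}(E)=0$ to exclude $H^2\widetilde\ch^{bH}_1(E)=0$ is well placed, since for such objects (e.g.\ skyscraper sheaves) the condition on $\widetilde\ch_2$ holds but the inequality fails.
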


\subsection{BMT(S)-type inequality} In this subsection, we assume that Theorem~\ref{symmetric stability} holds for all $(a,b)\in\mathbb{R}_{>0}\times\mathbb{R}$. This result will be established in the forthcoming paper \cite{LLL+}. Under this hypothesis, all the remaining results of the present paper continue to hold for real values of $(a,b)$ satisfying
\[
a\geq a(X,H).
\]
The goal in this subsection is to apply ideas similar to those of \cite{BMS16} to obtain a Bogomolov-type inequality for all tilt-semistable objects. 


\begin{theorem} \label{BMS}
    Let $(X,H)$ be a smooth projective threefold. If $E \in \Coh^{B_X}(X)$ is $\nu_{a, b}$-semistable for some $(a, b) \in \R^2$ with $
a\geq a(X, H)$, then 
    \begin{equation}\label{eq:BMS}
        \frac{1}{2} a^2 e_1^4 + 2 e_1^2 \left(e_2 - \frac{a^2}{6} e_0\right)\left(e_2 - \frac{a^2}{3} e_0\right) - \frac{4}{3} e_0 \left(e_2 - \frac{a^2}{6} e_0\right)^3 -  e_1^3 e_3 \geq 0,
    \end{equation}
    where $e_i \coloneqq H^{3-i}  \smash{\widetilde\ch}^{bH}_i (E)$. 
\end{theorem}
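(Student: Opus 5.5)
The inequality will be derived from Corollary~\ref{Cor.BMT}, applied not at the original point $(a,b)$ but at an auxiliary point $(a',b')$ on the wall through $E$. This is the strategy of \cite{BMS16}: the BMT inequality holds at every point of the region $a\geq a(X,H)$, and a tilt-semistable object stays semistable along its numerical wall.

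\textbf{Step 1: changing $b$.} Write $\widetilde\ch^{b'H}=e^{-(b'-b)H}\widetilde\ch^{bH}$, so each $e_i'$ is a polynomial in $\beta\coloneqq b'-b$ with coefficients $e_0,\dots,e_3$. The numerical wall for $E$ through $(a,b)$ in the $\nu$-slope is
\[
W=\{(a',b') : \nu_{a',b'}(E)=\nu_{a',b'}(E')\},
\]
where $E'$ is any object with $\nu_{a,b}(E')=\nu_{a,b}(E)$. As in the classical case, this should be a nested semicircle or line in the $(b,a^2/3)$-type coordinates. Note that $\nu$ is only a weak-stability slope, and the constant $a^2/6$ replaces the usual $a^2/2$, so this geometry must be redone.

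\textbf{Step 2: choosing the point.} Following \cite[Lemma~8.8]{BMS16}, we choose $(a',b')$ on $W$ with $\nu_{a',b'}(E)=0$. This is the point where the curve $\{\nu_{a',b'}(E)=0\}$ meets $W$, namely the top of the semicircle. By the standard wall-and-chamber argument, $E$ remains $\nu_{a',b'}$-semistable there, provided $(a',b')$ stays in the allowed region. Lemma~\ref{lem:bound} guarantees that the weak condition \eqref{eq:weak} holds, so the tilt stability on this region is well defined. Corollary~\ref{Cor.BMT} at $(a',b')$ then gives
\[
e_3'<\frac{a'^2}{2}e_1'.
\]

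\textbf{Step 3: eliminating $(a',b')$.} The two conditions are $e_2'=\tfrac{a'^2}{6}e_0'$ and the tangency (top-of-wall) condition. We solve these for $\beta$ and $a'^2$ in terms of $e_0,e_1,e_2$ and $a$. For instance, when $e_1\neq 0$, $\beta$ is determined by a condition of the form
\[
e_1\beta=e_2-\tfrac{a^2}{6}e_0.
\]
Substituting into the BMT inequality and multiplying by $e_1^3$ (taking care of the sign, since $e_1\ge0$ in $\Coh^{B_X}$) should reproduce \eqref{eq:BMS}. Its cubic term $\tfrac43 e_0(e_2-\tfrac{a^2}6e_0)^3$ comes from $\beta^3e_0/6$ multiplied through. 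The case $e_1=0$ is handled directly: there the inequality reads $0\ge0$.

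\textbf{Main obstacle.} The hard step is Step 2. We must check that the point $(a',b')$ satisfies $a'\geq a(X,H)$, so that Theorem~\ref{thm-main-section3}, and hence Corollary~\ref{Cor.BMT}, is available there. Moving up along the wall increases $a'$ relative to the lowest point, and we choose the top of the semicircle precisely to ensure $a'\ge a$. We also need that $E$ is not destabilized along the path, which requires the local finiteness of walls for $\nu$ (a support property derived from the Bogomolov inequality for $\widetilde\ch$). Finally, the real-parameter hypothesis is used here, since $(a',b')$ is generally irrational.
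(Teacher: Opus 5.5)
Your main line is the paper's proof. The paper works in the coordinates $w=\frac{a^2}{6}+\frac{b^2}{2}$, where the numerical wall is a line through $\widetilde\Pi(E)$, and moves along it to its second intersection with the parabola $\{\nu(E)=0\}$. That point is exactly the top of your semicircle, namely $b'=b+\nu_{a,b}(E)$.

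Your reason why this point is admissible is cleaner than the paper's tangent-line argument with $\Gamma_1,\Gamma_2$. The top maximizes $a$ along the wall, so every point of the arc from $(a,b)$ to $(a',b')$ has parameter at least $a\ge a(X,H)$.

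Step~3 also goes through once you write the top-of-wall relations explicitly: $e_1\beta=e_2-\frac{a^2}{6}e_0$ and $a'^2=a^2+3\beta^2$. Substituting these into $e_3'\le\frac{a'^2}{2}e_1'$ and multiplying by $e_1^3$ gives exactly \eqref{eq:BMS}. One small correction: the cubic coefficient $\frac43=\frac32-\frac16$ comes from two sources. It combines $-\frac{\beta^3}{6}e_0$ in $e_3'$ and $\frac{3\beta^2}{2}\cdot(-\beta e_0)$ in $\frac{a'^2}{2}e_1'$, not the former alone.

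The genuine error is the case $e_1=0$, which you dismiss as $0\ge0$. It is not $0\ge0$: the last term of \eqref{eq:BMS} survives, and the inequality becomes
\[
-\tfrac43\, e_0\Bigl(e_2-\tfrac{a^2}{6}e_0\Bigr)^3=\tfrac43\Bigl(\tfrac{a^2}{6}e_0^2-e_0e_2\Bigr)\Bigl(e_2-\tfrac{a^2}{6}e_0\Bigr)^2\ \ge\ 0 .
\]
This is a real constraint. For instance, for $E=F[1]$ with $F$ a slope-zero $\mu_{B_X}$-semistable vector bundle, it says $H\smash{\widetilde\ch}^{bH}_2(F)\le\frac{a^2}{6}H^3\rk F$.

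Your wall argument cannot produce this, because here $\nu_{a,b}(E)=+\infty$. There is no point on the wall with $\nu=0$; formally $\beta=(e_2-\frac{a^2}{6}e_0)/e_1$ is undefined. What is needed is a Bogomolov-type inequality: for $\nu$-semistable objects with $H^2\ch_1^{B_X}=0$ one has $\ch_0\cdot H\ch_2^{B_X}\le 0$ (the paper cites \cite[Theorem~7.3.1]{Bayer-Macri-Toda:2014}). The extra term $\frac1{12}H\ch_2(N_{X/\PP^n})\ch_0$ in $e_2$ is then absorbed by \eqref{eq:weak}. Together these give $e_0e_2\le\frac{a^2}{6}e_0^2$, which is precisely the nonnegativity of the first factor above.
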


\medskip

Before proceeding with the proof, we introduce the variables
\begin{equation}
    w \coloneqq \frac{a^2}{6}+\frac{b^2}{2}
    \qquad \text{and} \qquad
    v_i \coloneqq H^{3-i}\,\widetilde{\ch}_i,
\end{equation}
which simplify the notation. In these coordinates, the $\nu_{a,b}$-slope for objects in $\Coh^{B_X}(X)$ becomes
\begin{equation*}
    \nu_{b,w}
    =
    \frac{v_2-wv_0}{v_1-bv_0}-b.
\end{equation*}
Thus, on the two-dimensional $(b,w)$-slice
\[
w \geq \frac{a(X,H)^2}{6} + \frac{b^2}{2},
\]
there is a wall-and-chamber decomposition for objects $E\in\Coh^{b}(X)$. The walls are line segments whose extensions all pass through the point
\[
\widetilde{\Pi}(E)
\coloneqq
\left(
\frac{H^2\widetilde{\ch}_1(E)}{H^3\widetilde{\ch}_0(E)},
\,
\frac{H\widetilde{\ch}_2(E)}{H^3\widetilde{\ch}_0(E)}
\right) = \left( \frac{v_1(E)}{v_0(E)}, \, \frac{v_2(E)}{v_0(E)}\right).
\]
In this notation, Corollary~\ref{Cor.BMT} can be reformulated as follows. Let $E\in\Coh^{B_X}(X)$ with $v_0(E)\neq 0$. If $E$ is $\nu_{b,w}$-semistable for
\begin{equation}\label{eq.bw condition}
    w=b^2-bv_1(E)+v_2(E),
\end{equation}
then
\begin{equation}\label{eq.bw bmt}
v_3(E)-bv_2(E)+\frac{b^2}{2}v_1(E)-\frac{b^3}{6}v_0(E)
\leq
3\left(w-\frac{b^2}{2}\right)\bigl(v_1(E)-bv_0(E)\bigr).    
\end{equation}

\begin{proof}[Proof of Theorem \ref{BMS}]
We have $e_1\geq 0$ since $E\in\Coh^{B_X}(X)$. If $e_1=0$, then the Bogomolov inequality \cite[Theorem~7.3.1]{Bayer-Macri-Toda:2014} implies that $e_0e_2\leq 0$. Therefore, the only remaining term in \eqref{eq:BMS}, namely
\[
\frac{4}{3}\left(-e_0e_2+\frac{a^2}{6}e_0^2\right)
\left(e_2-\frac{a^2}{6}e_0\right)^2,
\]
is non-negative, as claimed. Hence, we may assume that $e_1=v_1-bv_0>0$.

Now work in the $(b,w)$-coordinates. Consider the two parabolas
\[
w=\frac{b^2}{2}+\frac{a(X,H)^2}{6}
\qquad\text{and}\qquad
w=b^2-b\frac{v_1(E)}{v_0(E)}+\frac{v_2(E)}{v_0(E)},
\]
denoted by $\Gamma_1$ and $\Gamma_2$, respectively, and illustrated in Figure~\ref{fig:reflected polar}.

The two intersection points of $\Gamma_1$ and $\Gamma_2$ are precisely the tangent points of the two tangent lines from the pole $\widetilde{\Pi}(E)$ to the parabola $\Gamma_1$. Consequently, for any point $(b,w)$ lying above $\Gamma_1$, the line joining $(b,w)$ and $\widetilde{\Pi}(E)$ intersects $\Gamma_2$ at $\widetilde{\Pi}(E)$ and at a second point $(b',w')$, which also lies above $\Gamma_1$. A direct computation gives
\begin{equation}
    b'=\frac{v_2-v_0w}{v_1-v_0b}
    \qquad\text{and}\qquad
    w'=
    \left(\frac{v_2-v_0w}{v_1-v_0b}\right)^2
    -
    \frac{v_2b-v_1w}{v_1-v_0b}.
\end{equation}

By construction, $E$ is $\nu_{b',w'}$-semistable and satisfies \eqref{eq.bw condition}. Hence Corollary~\ref{Cor.BMT} yields \eqref{eq.bw bmt} at $(b', w')$. Finally, translating back to the original variables $(a,b)$ and $e_i$ gives the desired inequality \eqref{eq:BMS}.
\end{proof}

\begin{figure}
    \centering
    \begin{tikzpicture}
    \begin{axis} [
        axis lines = middle,         
        axis line style = {-triangle 45}, 
        xlabel = {$b, \frac{v_1}{v_0}$},
        ylabel = {$w, \frac{v_2}{v_0}$},
        xmin = -7.5, xmax = 7.5,      
        ymin = -3.0, ymax = 15.0,     
        ticks = none,                 
        width = 12cm,                 
        height = 8cm                  
    ]
        \addplot [domain=-6.928:6.928, samples=100] {0.25*x^2 + 1.5};
    
        \addplot [dashed, blue, domain=-6.5:7.2] {x + 4.5};
    
        \addplot [blue, domain=-4.568:6.568, samples=100] {0.5*x^2 - x - 1.5};
    
        \addplot [red, domain=-5.5:2.6] {-2*x + 2.5};
    
        \draw [red, dashed] (axis cs:2, -1.5) -- (axis cs:-7.0, 7.5);
        \draw [red, dashed] (axis cs:2, -1.5) -- (axis cs:7.0, 13.5);
        
        \node[circle, fill=black, inner sep=1.8pt, label= right:{$\widetilde{\Pi}(E)$}] 
            at (axis cs:2, -1.5) {};
    
    
        \node[circle, fill=black, inner sep=1.5pt, label=above right:{$(b', w')$}] 
            at (axis cs:-4, 10.5) {};
    
        \node[circle, fill=black, inner sep=1.5pt, label=above right:{$(b, w)$}] 
            at (axis cs:-2, 6.5) {};
    
    \end{axis}
    \end{tikzpicture}
    \caption{BMT(S) inequality 
    }
    \label{fig:reflected polar}
\end{figure}

\bibliographystyle{alphaurl}
\bibliography{ref}

\end{document}